\theoremstyle{plain}
\newtheorem{thm}{Theorem}[section]
\newtheorem{lem}[thm]{Lemma}
\newtheorem{prop}[thm]{Proposition}
\newtheorem{cor}[thm]{Corollary}
\newtheorem{alg}[thm]{Algorithm}
\theoremstyle{definition}
\newtheorem{dfn}[thm]{Definition}
\newtheorem{exm}[thm]{Example}
\newtheorem{rem}[thm]{Remark}
\newcommand{\nset}{\mathds{N}}
\newcommand{\qset}{\mathds{Q}}
\newcommand{\rset}{\mathds{R}}
\newcommand{\cset}{\mathds{C}}
\newcommand{\pset}{\mathds{P}}
\newcommand{\diag}{\mathrm{diag}\,}
\newcommand{\diff}{\mathrm{d}}
\newcommand{\lin}{\mathrm{lin}\,}
\newcommand{\loc}{\mathrm{loc}}
\newcommand{\rank}{\mathrm{rank}\,}
\newcommand{\supp}{\mathrm{supp}\,}
\newcommand{\und}{\;\wedge\;}
\newcommand{\folgt}{\;\Rightarrow\;}
\newcommand{\inter}{\mathrm{int}\,}
\newcommand{\one}{\mathds{1}}
\newcommand{\cA}{\mathcal{A}}
\newcommand{\cat}{\mathcal{C}}
\newcommand{\cD}{\mathcal{D}}
\newcommand{\cH}{\mathcal{H}}
\newcommand{\cN}{\mathcal{N}}
\newcommand{\cS}{\mathcal{S}}
\newcommand{\cT}{\mathcal{T}}
\newcommand{\cX}{\mathcal{X}}
\newcommand{\cZ}{\mathcal{Z}}
\newcommand{\sA}{\mathsf{A}}
\newcommand{\sN}{\mathsf{N}}
\newcommand{\fA}{\mathfrak{A}}
\newcommand{\fM}{\mathfrak{M}}
\newcommand{\fp}{\mathfrak{p}}
\author{Philipp J.\ di~Dio}
\address{Technische Universit\"at Berlin, Institut f\"ur Mathematik, Stra\ss{}e des 17.\ Juni 136, D-10623 Berlin, Germany}
\email[A1]{didio@tu-berlin.de}
\begin{document}
\maketitle

\begin{abstract}
In this paper we introduce the theory of derivatives of moments and (moment) functionals to represent moment functionals by Gaussian mixtures, characteristic functions of polytopes, and simple functions of polytopes. We study, among other measures, Gaussian mixtures, their reconstruction from moments and especially the number of Gaussians needed to represent moment functionals. We find that there are moment functionals $L:\rset[x_1,\dots,x_n]_{\leq 2d}\to\rset$ which can be represented by a sum of $\binom{n+2d}{n} - n\cdot \binom{n+d}{n} + \binom{n}{2}$ Gaussians but not less. Hence, for any $d\in\nset$ and $\varepsilon>0$ we find an $n\in\nset$ such that $L$ can be represented by a sum of $(1-\varepsilon)\cdot\binom{n+2d}{n}$ Gaussians but not less. An upper bound is $\binom{n+2d}{n}-1$.
\end{abstract}

\noindent
\textbf{AMS  Subject  Classification (2010)}. 44A60, 14P99, 30E05, 65D32, 35R30.

\noindent
\textbf{Key  words:} truncated moment problem, Carath\'eodory number, measure reconstruction, Gaussian mixture, generalized eigenvalues, shape reconstruction, algebraic statistics, integral representation


\section{Introduction}

Reconstructing measures from moments is a key problem in statistics \cite{pearson94,titter85,martin05,didio18gaussian}, shape reconstruction \cite{balins61,manas68,mathei80,lee82,milanf95,golub99,becker07, gravin12,gravin14,gravin18,kohn18}, pattern recognition \cite{hu62,dai92,chen93,sommer07,ammari19}, financial mathemiatics \cite{anast06,stoyan16}, and many other fields, and attracts increasing attention especially with the growing usage of computer programs and algorithms to handle such problems. But despite of its growing importance and wide range of application, the theoretical knowledge on the problem of reconstructing measures from moments is very small, especially when only finitely many moments are known. For instance, only recently \cite{didio18gaussian} the question of which truncated moment sequences are represented by Gaussian, log-normal, and more general mixtures was fully answered and the first non-trivial bounds on the required number of summands were given.

While derivatives in the context of moments have been used before, surprisingly no unified approach was introduced so far. In the present paper we present the first unified and systematic approach to reconstruct and investigate measures from moments: \emph{derivatives of moments}. In \Cref{sec:derivMomMeas} we define and investigate derivatives of moments and show that the derivative $\partial^\alpha L := (-1)^{|\alpha|}\cdot L\circ\partial^\alpha$ of a (moment) functional $L$ is represented by the distributional derivative $\partial^\alpha\mu$ of a representing measure $\mu$ of $L$. From this treatment it is clear that $\partial^\alpha L$ is an object that is interesting to investigate on its own account and not only because it solves problems and appears (implicitly or explicitly) in proofs and calculations.

In \Cref{sec:applicDerivMom} we use the concept of derivatives of moments to reprove several known results on  reconstructing polytopes and special measures in a unified and efficient way. Proofs formerly presented over several pages now reduce to a few lines and their key arguments become much more apparent. We use these simplified arguments and proofs to extend these results, e.g., we extend the results from polynomial moments
\[s_\alpha\quad :=\quad \int_{\rset^n} x^\alpha~\diff\mu(x)\]
with $x^\alpha=x_1^{\alpha_1} \cdots x_n^{\alpha_n}$ to non-polynomial moments:
\[s_a\quad :=\quad \int_{\rset^n} a(x)~\diff\mu(x),\]
where $a$ is a measurable (differentiable) function. This allows us to formulate results in full generality and $\partial^\alpha L$ can still be easily calculated from $L$.

In \Cref{sec:gaussian} we return to the reconstruction and investigation of (Gaussian) mixtures. Based on derivatives of moments we fully characterize moment sequences from one ($n$-dimensional) Gaussian distribution
$c\cdot\exp\left(-(x-b)^T A (x-b)\right)$
and we determine $b\in\rset^n$ and $A\in\rset^{n\times n}$ from the moments. While this was known before, our simplified arguments and proofs using derivatives of moments enable us to extend this to mixtures, i.e., linear combinations of e.g.\ Gaussian distributions:
\begin{equation}\label{eq:generalMixture}
F(x)\quad :=\quad \sum_{i=1}^k c_i\cdot \exp\left(-(x-b_i)^T A_i (x-b_i)\right)
\end{equation}
with $c_i\in\rset$ ($c_i>0$), $b_i\in\rset^n$ and $A_i\in\rset^{n\times n}$ for all $i=1,\dots,k$. In the one-dimensional case ($n=1$) we give an explicit way to determine the parameters in (\ref{eq:generalMixture}). Simple formulas are gained under the restriction that $A_1, \dots A_k\in\rset$ are all equal: $A_1=\dots=A_k$. But before we allow the possible relaxation to arbitrary $A_1,\dots,A_k\in\rset^{n\times n}$ we examine the number $k$ of mixtures required to represent a moment sequence $s$, i.e., its minimal number, the (\emph{mixture}) \emph{Carath\'eodory number $\cat_\sA^M(s)$}. Based on very recent results on the Carath\'eodory number $\cat_\sA$ (number of Dirac delta measures, i.e., point evaluations) in \cite{rienerOptima,didioConeArXiv,didio17Cara} and especially \cite{didio19HilbertArxiv} we derive new lower bounds and asymptotic limits for the case of mixtures as well. We show that a non-zero (polynomial) function $p$ with finitely many zeros $\cZ(p)$ gives a moment sequence $s$, resp.\ moment functional $L$, which needs as many components in a mixtures representation as there are linearly independent point evaluation located at $\cZ(p)$, see \Cref{thm:mixtureBoundBelow}. As a consequence (\Cref{cor:lowerBoundsExplicit}) we find that there are moment functionals $L:\rset[x_1,\dots,x_n]_{\leq 2d}\to\rset$, which can be represented by a sum of
\begin{equation}\label{eq:lowerBoundsEven}
\binom{n+2d}{n} - n\cdot \binom{n+d}{n} + \binom{n}{2}
\end{equation}
Gaussian distributions but not less. This disproves the belief that allowing arbitrary $A_i\in\rset^{n\times n}$ with $A_i\succ 0$ reduces the number $k$ of components. Finally, (\ref{eq:lowerBoundsEven}) shows that for each $d\in\nset$ and $\varepsilon>0$ there is an $n\in\nset$ and a moment functional $L:\rset[x_1,\dots,x_n]_{\leq 2d}\to\rset$ which can be represented by a sum of
\[(1-\varepsilon)\cdot\binom{n+2d}{n}\]
Gaussian distributions but not less.

\section{Preliminaries}

Let $\cA$ be a (finite dimensional) real vector space of measurable functions on a measurable space $(\cX,\fA)$. Denote by $L:\cA\rightarrow\rset$ a continuous linear functional. If there is a (positive) measure $\mu$ on $(\cX,\fA)$ such that
\begin{equation}\label{eq:momentFunctionalMeasure}
L(a) = \int_\cX a(x)~\diff\mu(x)\qquad \text{for all}\ a\in\cA,
\end{equation}
then $L$ is called a moment functional. If $\cA$ is finite dimensional, it is a truncated moment functional. By $\sA = \{a_1,\dots,a_m\}$ we denote a basis of the $m$-dimensional real vector space $\cA$ and by
\[s_i := L(a_i)\]
the $a_i$-th (or simply $i$-th) moment of $L$ (or $\mu$ for a $\mu$ as in (\ref{eq:momentFunctionalMeasure})). Given a sequence $s = (s_1,\dots,s_m)\in\rset^m$ we define the Riesz functional $L_s$ by setting $L_s(a_i) = s_i$ for all $i=1,\dots,m$ and extending it linearly to $\cA$, i.e., the Riesz functional induces a bijection between moment sequences $s=(s_1,\dots,s_m)$ and moment functionals $L = L_s$. By $\fM_\sA$ we denote the set of all measures on $(\cX,\fA)$ such that all $a\in\cA$ are integrable and by $\fM_\sA(s)$ or $\fM_\sA(L)$ we denote all representing measures of the moment sequence $s$ resp.\ moment functional $L$. Since the polynomials $\rset[x_1,\dots,x_n]_{\leq 2d}$ are of special importance, we denote by
\[\sA_{n,d}: \{x^\alpha \,|\,\alpha\in\nset_0^n \und |\alpha|=\alpha_1 + \dots + \alpha_m \leq d\}\]
the monomial basis, where we have $x^\alpha = x_1^{\alpha_1}\cdots x_n^{\alpha_n}$ with $\alpha=(\alpha_1,\dots,\alpha_n)\in\nset_0^n$. On $\nset_0^n$ we work with the partial order $\alpha=(\alpha_1,\dots,\alpha_n) \leq \beta = (\beta_1,\dots,\beta_n)$ if $\alpha_i \leq \beta_i$ for all $i=1,\dots,n$.

\begin{dfn}
Let $\sA = \{a_1,\dots,a_m\}$ be a basis of the finite dimensional vector space $\cA$ of measurable functions on the measurable space $(\cX,\fA)$. We define $s_\sA$ by
\[s_\sA: \cX\rightarrow\rset^m,\quad x\mapsto s_\sA(x):= \begin{pmatrix}a_1(x)\\ \vdots\\ a_m(x)\end{pmatrix}.\]
\end{dfn}

Of course, $s_\sA(x)$ is the moment sequence of the Dirac $\delta_x$ measure and the corresponding moment functional is the point evaluation $l_x$ with $l_x(a) := a(x)$. By a measure we always mean a positive measure unless it is explicitly denoted as a signed measure.

The fundamental theorem in the theory of truncated moments is the following.

\begin{thm}[Richter Theorem \cite{richte57}]\label{thm:richter}
Let $\sA = \{a_1,\dots,a_m\}$, $m\in\nset$, be finitely many measurable functions on a measurable space $(\cX,\fA)$. Then every moment sequence $s\in\cS_\sA$ resp.\ moment functional $L:\cA\to\rset$ has a $k$-atomic representing measure
\[s = \sum_{i=1}^k c_i\cdot s_\sA(x_i) \qquad\text{resp.}\qquad L=\sum_{i=1}^k c_i\cdot l_{x_i}\]
with $k\leq m$, $c_1,\dots,c_k>0$, and $x_1,\dots,x_k\in\cX$.
\end{thm}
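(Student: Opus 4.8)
The plan is to reduce the integral representation $L(a_i)=\int_\cX a_i\,\diff\mu$ of a single representing measure $\mu\in\fM_\sA(L)$ to a finite conic combination of point‑evaluation vectors and then thin that combination down to $m$ terms by Carath\'eodory's theorem. First I would pass to $\rset^m$ via the map $s_\sA$, writing $s:=(L(a_1),\dots,L(a_m))$, so that $s=\int_\cX s_\sA(x)\,\diff\mu(x)$ as a vector‑valued integral (well defined since every $a_i$ is $\mu$‑integrable for $\mu\in\fM_\sA$). Let $C:=\{\sum_{i=1}^N c_i\,s_\sA(x_i):N\in\nset_0,\ c_i>0,\ x_i\in\cX\}$ be the convex cone generated by the vectors $s_\sA(x)$, $x\in\cX$. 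Granting $s\in C$, the elementary Carath\'eodory theorem for cones in $\rset^m$ — repeatedly using a nontrivial linear dependence among any $m+1$ of the $s_\sA(x_i)$ to kill one positive coefficient while keeping the others nonnegative — yields $s=\sum_{i=1}^k c_i\,s_\sA(x_i)$ with $k\leq m$, $c_i>0$, $x_i\in\cX$, i.e.\ exactly $L=\sum_{i=1}^k c_i\,l_{x_i}$.

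The real content is thus to prove $s\in C$, and I would argue by induction on $m$. If $s=0$ the zero measure represents $L$ and $k=0$, so assume $s\neq 0$. I first note $s\in\overline C$: whenever $p=(p_1,\dots,p_m)\in\rset^m$ satisfies $\langle p,s_\sA(x)\rangle=\sum_i p_i a_i(x)\geq 0$ for all $x\in\cX$, positivity of $\mu$ gives $\langle p,s\rangle=\int_\cX\langle p,s_\sA(x)\rangle\,\diff\mu(x)\geq 0$, so $s$ lies in the double dual cone of $C$, which is $\overline C$. If in fact $s\in C$ we are done by the previous paragraph. Otherwise $s\in\overline C\setminus C$; since $\inter\overline C=\inter C\subseteq C$ we get $s\in\partial\overline C$, so there is a supporting hyperplane through $s$, which for a closed convex cone may be taken through the origin: some $p\neq 0$ with $\langle p,y\rangle\geq 0$ for all $y\in\overline C$ and $\langle p,s\rangle=0$. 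In particular $\langle p,s_\sA(x)\rangle\geq 0$ for all $x\in\cX$ while $\int_\cX\langle p,s_\sA(x)\rangle\,\diff\mu(x)=0$, hence $\langle p,s_\sA(x)\rangle=0$ for $\mu$‑almost every $x$.

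Consequently $\mu$ is carried by the measurable set $\cX':=\{x\in\cX:\sum_i p_i a_i(x)=0\}$, and $\mu':=\mu|_{\cX'}$ still represents $L$ since $\mu(\cX\setminus\cX')=0$. On $\cX'$ the restrictions $a_1|_{\cX'},\dots,a_m|_{\cX'}$ obey the nontrivial relation $\sum_i p_i\,a_i|_{\cX'}=0$, hence span a space of dimension at most $m-1$; choosing a basis $\{b_1,\dots,b_{m'}\}$ of that space with $m'\leq m-1$ and applying the induction hypothesis on $(\cX',\fA|_{\cX'})$ to the functional $f\mapsto\int_{\cX'}f\,\diff\mu'$ produces $L=\sum_{i=1}^k c_i\,l_{x_i}$ with $x_i\in\cX'\subseteq\cX$ and $k\leq m'\leq m-1<m$, transported back to the $a_i$ via $L(a_j)=\int_{\cX'}a_j|_{\cX'}\,\diff\mu'$. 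The base case $m=1$ is immediate: if $L(a_1)=0$ use the zero measure, and otherwise $a_1$ takes the sign $\sign L(a_1)$ on a set of positive $\mu$‑measure, so one weighted point evaluation suffices.

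The one delicate point is that the cone $C$ need not be closed, so one cannot simply separate $s$ from $C$ and conclude; the substitute is the dichotomy above — either $s$ is already a finite conic combination, or it sits on a proper exposed face of $\overline C$ and $\mu$ collapses onto a strictly smaller function system, feeding the induction. Establishing the almost‑everywhere vanishing and bookkeeping the drop from $m$ to $m-1$ functions (together with the routine Carath\'eodory pruning) is the only genuine work; everything else is linear algebra and elementary convex geometry in $\rset^m$.
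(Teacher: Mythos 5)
The paper gives no proof of this theorem; it is imported from Richter \cite{richte57} (with the accompanying remark crediting Rogosinski and Rosenbloom), so there is nothing in the text to compare your argument against. Your proof is correct and complete, and it is essentially the standard modern argument for the Richter--Rogosinski--Rosenbloom theorem (the one found, e.g., in \cite{schmudMomentBook}): establish $s\in\overline{C}$ via the bipolar of the conic hull of the curve $x\mapsto s_\sA(x)$, resolve the possible non-closedness of $C$ by the dichotomy ``either $s\in C$, or $s$ lies on the boundary of $\overline{C}$, the supporting functional $p$ through the origin satisfies $\sum_i p_i a_i\geq 0$ with $\int\sum_i p_i a_i\,\diff\mu=0$, hence $\mu$ collapses onto $\cZ(\sum_i p_i a_i)$ and the function system drops to at most $m-1$ dimensions,'' and finish with conic Carath\'eodory pruning. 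The two places where care is genuinely required --- that $\inter\overline{C}=\inter C$ holds even when the interior is empty, so that $s\in\overline{C}\setminus C$ really forces a supporting hyperplane through the origin, and that the reduced system $b_1,\dots,b_{m'}$ on $\cX'$ is still measurable and $\mu'$-integrable so the induction hypothesis applies and transports back to the $a_j$ --- are both handled, as are the degenerate cases $s=0$ and the base case $m=1$. I see no gap.
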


The theorem can also be called \emph{Richter--Rogosinski--Rosenbloom Theorem} \cite{richte57,rogosi58,rosenb52}, see the discussion after Example 20 in \cite{didioConeArXiv} for more details. That every truncated moment sequence has a $k$-atomic representing measure ensures that the Carath\'eodory number $\cat_\sA$ is well-defined.

\begin{dfn}\label{dfn:caraNumber}
Let $\sA = \{a_1,\dots,a_m\}$ be linearly independent measurable functions on a measurable space $(\cX,\fA)$. For $s\in\cS_\sA$ we define the \emph{Carath\'eodory number $\cat_\sA(s)$ of $s$} by
\[\cat_\sA(s) := \min \{k\in\nset_0 \,|\, \exists\mu\in\fM_\sA(s)\ k\text{-atomic}\}.\]
We define the \emph{Carath\'eodory number $\cat_\sA$ of $\cS_\sA$} by
\[\cat_\sA := \max_{s\in\cS_\sA} \cat_\sA(s).\]The same definition holds for moment functionals $L:\cA\rightarrow\rset$.
\end{dfn}

The following theorem turns out to be a convenient tool for proving lower bounds on the Carath\'eodory number $\cat_\sA$.

\begin{thm}[{\cite[Thm.\ 18]{didio17Cara}}]\label{thm:caraLowerZeroSet}
Let $\sA = \{a_1,\dots,a_m\}$ be measurable functions on a measurable space $(\cX,\fA)$, $s\in\cS_\sA$, and $a\in\cA$ with $a\geq 0$ on $\cX$, $\cZ(a) = \{x_1,\dots,x_k\}$ and $L_s(a) = 0$. Then
\[\cat_\sA \quad\geq\quad \cat_\sA(s) \quad=\quad \dim\lin\{s_\sA(x_i) \,|\, i=1,\dots,k\}.\]
\end{thm}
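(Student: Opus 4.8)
The plan is to first pin down where any representing measure of $s$ can sit and then to run a Carath\'eodory-type reduction inside that finite set. Write $v_i := s_\sA(x_i)\in\rset^m$ for $i=1,\dots,k$. Since $a\geq 0$ on $\cX$, the linear functional $s'\mapsto L_{s'}(a)$ is nonnegative on the moment cone $\cS_\sA$, so $F:=\{s'\in\cS_\sA : L_{s'}(a)=0\}$ is an exposed face of $\cS_\sA$ and $s\in F$. Now fix any $\mu\in\fM_\sA(s)$ (it exists because $s\in\cS_\sA$). From $a\geq 0$ on $\cX$ and
\[0 \;=\; L_s(a) \;=\; \int_\cX a(x)\,\diff\mu(x)\]
the nonnegative measure $B\mapsto\int_B a\,\diff\mu$ has total mass $0$, hence vanishes, so $\mu$ is concentrated on $\{x\in\cX : a(x)=0\}=\cZ(a)=\{x_1,\dots,x_k\}$. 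As this set is finite, $\mu=\sum_{i=1}^k c_i\,\delta_{x_i}$ with all $c_i\geq 0$, and therefore $s=\sum_{i=1}^k c_i v_i$. Consequently $F=\pos\{v_1,\dots,v_k\}$, the representing measures of $s$ are in bijection with the nonempty polyhedron $P:=\{c\in\rset_{\geq 0}^k : \sum_{i=1}^k c_i v_i=s\}$, and $\cat_\sA(s)=\min\{\,|\{i:c_i>0\}| : c\in P\,\}$.

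For the upper bound I would take $c\in P$ of minimal support $S$ and check that $\{v_i : i\in S\}$ is linearly independent: otherwise there is $0\neq d\in\rset^k$ with $\supp d\subseteq S$ and $\sum_{i\in S}d_i v_i=0$, and replacing $c$ by $c-td$, with $t\geq 0$ chosen maximal subject to $c-td\geq 0$, stays in $P$ while sending one more coordinate to $0$, contradicting minimality of $S$. Hence $\cat_\sA(s)=|S|=\dim\lin\{v_i:i\in S\}\leq\dim\lin\{v_1,\dots,v_k\}$. This is exactly Carath\'eodory's theorem for the finitely generated cone $\pos\{v_1,\dots,v_k\}$, and can alternatively be deduced from \Cref{thm:richter} applied to $\sA$ restricted to the finite set $\cZ(a)$.

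The main obstacle is the reverse inequality $\cat_\sA(s)\geq\dim\lin\{v_1,\dots,v_k\}$, i.e.\ the statement that the atoms of \emph{any} representing measure of $s$ --- in particular of a minimal one --- already linearly span $U:=\lin\{v_1,\dots,v_k\}$; equivalently, that $s$ lies in no proper linear subspace spanned by a subfamily of $\{v_1,\dots,v_k\}$. I would attack this through the geometry of the exposed face $F=\pos\{v_1,\dots,v_k\}$, arguing that under the hypotheses on $a$ and $\cZ(a)$ the generators $v_1,\dots,v_k$ form a non-redundant (in the situations of interest, linearly independent) system for $F$ while $s$ meets the relative interior of $F$, so that the representation $s=\sum_{i=1}^k c_i v_i$ is forced with $c_i>0$ for every $i$ and hence every representing measure of $s$ carries exactly $k=\dim U$ atoms. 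This non-redundancy/interior step is the delicate point, and is where the full strength of the hypotheses on $a$ must be used; granting it, the remaining inequality $\cat_\sA\geq\cat_\sA(s)$ is immediate from the definition of $\cat_\sA$ as the maximum of $\cat_\sA(\,\cdot\,)$ over $\cS_\sA$.
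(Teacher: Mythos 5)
Your localization step and your upper bound are correct and are the standard route: since $a\geq 0$ and $\int_\cX a~\diff\mu=L_s(a)=0$, every representing measure $\mu$ of $s$ is concentrated on $\cZ(a)=\{x_1,\dots,x_k\}$, hence $s=\sum_i c_i\, s_\sA(x_i)$ with $c_i\geq 0$, and the conic Carath\'eodory argument (equivalently, \Cref{thm:richter} applied to $\sA|_{\cZ(a)}$) gives $\cat_\sA(s)\leq\dim\lin\{s_\sA(x_i)\}$. This part is fine.

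The lower bound, which is the substance of the theorem and the only direction the paper actually uses (in the proof of \Cref{thm:mixtureBoundBelow}), is not proved: you reduce it to the claims that $s$ meets the relative interior of $F=\pos\{v_1,\dots,v_k\}$ and that the $v_i$ form a non-redundant system, and then write ``granting it''. Neither claim follows from the hypotheses you are working with, and no argument is supplied; note also that your conclusion ``every representing measure carries exactly $k=\dim U$ atoms'' silently identifies $k$ with $\dim\lin\{v_1,\dots,v_k\}$, which need not hold. The gap is not a routine verification: from the hypotheses as reproduced here the equality can genuinely fail. For instance, with $\cX=\rset$, $\cA=\rset[x]_{\leq 4}$, $a(x)=x^2(x-1)^2$ and $s=s_\sA(0)$ one has $a\geq 0$, $\cZ(a)=\{0,1\}$ and $L_s(a)=a(0)=0$, yet $\cat_\sA(s)=1$ while $\dim\lin\{s_\sA(0),s_\sA(1)\}=2$. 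So closing your argument requires an input that is absent from your write-up (and is implicit in the cited source and in every application in this paper), namely that $s$ is represented by a measure charging \emph{all} of $\cZ(a)$ and that no mass can be shifted onto a proper subfamily of the $v_i$. In the situation where the theorem is actually invoked, $s=\sum_{i=1}^k s_\sA(x_i)$ with linearly independent $s_\sA(x_i)$; there your own set-up does finish the proof, since the polyhedron $P$ reduces to the single point $(1,\dots,1)$ and every representing measure has exactly $k=\dim\lin\{s_\sA(x_i)\}$ atoms. You should either add such a hypothesis and complete the argument along these lines, or follow the precise formulation and proof of \cite[Thm.\ 18]{didio17Cara}.
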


\begin{rem}
 Note that in \Cref{thm:caraLowerZeroSet} it is crucial that the zero set of $a$ is finite: Take $a=0$ and $\cX=\rset^n$ for a simple example where the statement fails when the zero set is not finite. 
\end{rem}

It is well-known that in general not every sequence $s\in\rset^m$ or linear functional $L:\cA\rightarrow\rset$ has a positive representing measure. But of course it always has a signed $k$-atomic representing measure with $k\leq m$.

\begin{lem}[{\cite[Prop.\ 12]{didioConeArXiv}}]\label{lem:signedMeasures}
Let $\sA = \{a_1,\dots,a_m\}$ be a basis of the finite dimensional space $\cA$ of measurable functions on a measurable space $(\cX,\fA)$. There exist points $x_1,\dots,x_m\in\cX$ such that every vector $s\in\rset^m$ has a signed $k$-atomic representing measure $\mu$ with $k\leq m$ and all atoms are from $\{x_1,\dots,x_m\}$,  i.e., every functional $L:\cA\rightarrow\rset$ is the linear combination $L = c_1 l_{x_1} + \cdots + c_m l_{x_m}${, $c_i\in\rset$}.
\end{lem}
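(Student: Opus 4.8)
The plan is to reduce the statement to a purely linear-algebraic fact: that one can choose $m$ points whose evaluation vectors $s_\sA(x_i)$ form a basis of $\rset^m$, equivalently that the matrix $\big(a_j(x_i)\big)_{i,j=1}^m$ is invertible. First I would consider the image of the evaluation map, $\Sigma := \{\,s_\sA(x)\mid x\in\cX\,\}\subseteq\rset^m$, and show $\lin\Sigma = \rset^m$. Indeed, if $\lin\Sigma$ were a proper subspace there would be a nonzero $c=(c_1,\dots,c_m)\in\rset^m$ with $\langle c, s_\sA(x)\rangle = \sum_{j=1}^m c_j a_j(x) = 0$ for every $x\in\cX$; but this says the function $\sum_{j=1}^m c_j a_j$ is the zero element of $\cA$, contradicting the linear independence of the basis $\sA = \{a_1,\dots,a_m\}$ in the function space. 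This is the one step where it matters that ``linearly independent measurable functions'' is meant pointwise in the ambient space, and no measure-theoretic input beyond this is needed.

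Next, from $\lin\Sigma = \rset^m$ I would extract a basis consisting of $m$ evaluation vectors by a greedy selection. Pick $x_1\in\cX$ with $s_\sA(x_1)\neq 0$ (possible since $\Sigma$ spans $\rset^m$, hence $\Sigma\neq\{0\}$ unless $m=0$). Inductively, once $x_1,\dots,x_j$ have been chosen so that $s_\sA(x_1),\dots,s_\sA(x_j)$ are linearly independent, if $j<m$ then their span $V_j$ is a proper subspace of $\rset^m$; since $\Sigma$ spans $\rset^m$, there is an $x_{j+1}\in\cX$ with $s_\sA(x_{j+1})\notin V_j$, and then $s_\sA(x_1),\dots,s_\sA(x_{j+1})$ are linearly independent. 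After $m$ steps we obtain $x_1,\dots,x_m\in\cX$ such that $\{s_\sA(x_i)\}_{i=1}^m$ is a basis of $\rset^m$.

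Finally I would conclude as follows. Given any $s\in\rset^m$, expand it in this basis to get unique $c_1,\dots,c_m\in\rset$ with $s = \sum_{i=1}^m c_i\, s_\sA(x_i)$. Discarding the indices with $c_i = 0$ yields a signed $k$-atomic representing measure $\mu = \sum_{i:\,c_i\neq 0} c_i\,\delta_{x_i}$ with $k\leq m$ and all atoms in $\{x_1,\dots,x_m\}$; equivalently, in terms of the Riesz functional, $L_s = \sum_{i=1}^m c_i\, l_{x_i}$. Since every linear functional $L:\cA\to\rset$ equals $L_s$ for the sequence $s = (L(a_1),\dots,L(a_m))$, this gives the asserted representation $L = c_1 l_{x_1} + \cdots + c_m l_{x_m}$, and the lemma follows. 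There is no real obstacle here beyond bookkeeping; the content is entirely the spanning argument in the first paragraph.
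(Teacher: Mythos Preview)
Your argument is correct. The paper does not actually prove this lemma; it merely quotes it from \cite{didioConeArXiv} (Proposition~12 there), so there is no in-paper proof to compare against. Your reduction to the linear-algebraic fact that $\{s_\sA(x):x\in\cX\}$ spans $\rset^m$ (because a nontrivial linear relation among the $s_\sA(x)$ would make $\sum_j c_j a_j$ the zero function, contradicting that $\sA$ is a basis), followed by a greedy extraction of $m$ points with linearly independent evaluation vectors, is exactly the standard proof and is what the cited reference does as well.
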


It is well-known that in dimension $n=1$ the atom positions $x_i$ of a moment sequence can be calculated from the generalized eigenvalue problem, see e.g.\ \cite{golub99}.
To formulate this and other results we introduce the following shift.

\begin{dfn}\label{dfn:multipl}
Let $n,d\in\nset$ and $s = (s_\alpha)_{\alpha\in\nset_0^n:|\alpha|\leq d}$. For $\beta\in\nset_0^n$ with $|\beta|\leq d$ we define $M_\beta s := (M_\beta s_\alpha)_{\alpha\in\nset_0^n:|\alpha+\beta|\leq d}$ by $M_\beta s_\alpha := s_{\alpha+\beta}$, i.e., $(M_\beta L)(p) = L(x^\beta\cdot p)$.
\end{dfn}

For a space $\cA$ of measurable functions with basis $\sA = \{a_1,a_2\dots\}$ the \emph{Hankel matrix} $\cH_d(L)$ of a linear functional $L:\cA^2\rightarrow\rset$ is given by $\cH_d(L) = (L(a_i a_j))_{i,j=1}^d$. The atom positions of a truncated moment sequence $s$ (resp.\ moment functional $L$) are then determined from results in \Cref{sec:atomicRecon}.

We use the following notation.

\begin{dfn}\label{dfn:matrixlimit}
Let $s=(s_\alpha)_{\alpha\in\sN},t=(t_\alpha)_{\alpha\in\sN},\dots,z=(z_\alpha)_{\alpha\in\sN}$ be multi-indexed sequences $\alpha\in\sN\subseteq\nset_0^n$ and $l\in\nset$. We define the matrix
\[(s,t,\dots,z)_l := (s_\alpha,t_\alpha,\dots,z_\alpha)_{\alpha\in\sN: |\alpha|\leq {\color{purple}l}}\]
\end{dfn}

For (Gaussian) mixtures we use the following general setting as in \cite{didio18gaussian}:

\begin{dfn}\label{dfn:prob}
Let $\Sigma$ be some fixed set of parameters (in a larger metric space). For all $\sigma\in\Sigma$ and $\xi\in\cX$ we let $\delta_{\sigma,\xi}$ denote probability measures on the measurable space $(\cX,\fA)$ such that:
\begin{enumerate}[i)]
\item All $a\in\cA$ are $\delta_{\sigma,\xi}$-measurable for all $(\sigma,\xi)\in\Sigma\times\cX$, i.e.,
\[\int_\cX |a(x)|~\diff\delta_{\sigma,\xi}(x) \quad<\quad\infty.\]

\item There exists a (unique) $\sigma_0\in\overline{\Sigma}$ (closure of $\Sigma$) such that
\[\int_\cX a(x)~\diff\delta_{\sigma,\xi} \quad\xrightarrow{\sigma\to\sigma_0}\quad \int_\cX a(x)~\diff\delta_\xi(x)\quad =\quad a(\xi)\]
for all $a\in\cA$ and $\xi\in\cX$.
\end{enumerate}
\end{dfn}

For $k\in\nset$ and $\sigma_1,\dots,\sigma_k\in\Sigma$, and $\xi_1,\dots,\xi_k\in\cX$ a ($k$-)\emph{mixture} is then
\[\sum_{i=1}^k c_i\cdot\delta_{\sigma_i,\xi_i}\]
where $\delta_{\sigma_i,\xi_i}$ is its $i$-th \emph{component}. We have $c_i \geq 0$ unless we explicitly speak of \emph{signed} mixtures ($c_i\in\rset$).

Examples $(\delta_{\sigma,\xi},\cX,\Sigma,\sigma_0)$ of this general setting are Gaussian and log-normal measures, see \cite{didio18gaussian}. There we already treated the Carath\'eodory number $\cat_\sA^M$ of mixtures and answered which moment sequences can be represented by mixtures.

\begin{dfn}
If $s\in\cS_\sA$ has a mixture representation, then we define its \emph{(mixture) Carath\'eodory number $\cat^M_\sA(s)$} by
\[\cat^M_\sA(s) := \{k\in\nset_0\,|\, s\ \text{has a mixture representation with}\ k\ \text{components}\}.\]
We call $\cT_\sA\subseteq\cS_\sA$ the \emph{mixture cone}, i.e., the set of all moment sequences which have a (finite) mixture representation. The \emph{(mixture) Carath\'eodory number $\cat_\sA^M$} is then defined by
\[\cat_\sA^M\quad :=\quad \max_{s\in\cT_\sA} \cat^M_\sA(s).\]
\end{dfn}

Of course, since we always have $\cat_\sA^M \leq \dim\cA$, $\cat_\sA^M$ is well-defined. In \cite{didio18gaussian} we gave upper bounds on $\cat_\sA^M$.

\begin{thm}[{\cite[Thm.\ 17(ii)]{didio18gaussian}}]\label{thm:interMixtures}
Let $\cA$ be a finite-dimensional space of continuous functions and $\delta_{\sigma,\xi}$ probability measures as in \Cref{dfn:prob}. Then
\[\inter\cT_\sA\quad =\quad \inter\cS_\sA.\]
\end{thm}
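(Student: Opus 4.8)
The plan is to prove the two inclusions separately. The inclusion $\inter\cT_\sA\subseteq\inter\cS_\sA$ is immediate: every non-signed mixture $\sum_{i=1}^k c_i\delta_{\sigma_i,\xi_i}$ is a positive measure under which all $a\in\cA$ are integrable by property~i) of \Cref{dfn:prob}, so $\cT_\sA\subseteq\cS_\sA$ and hence $\inter\cT_\sA\subseteq\inter\cS_\sA$. For the reverse inclusion the strategy is threefold: (1) show that $\cS_\sA$ lies in the closure of $\cT_\sA$; (2) observe that both $\cS_\sA$ and $\cT_\sA$ are convex; and (3) invoke the fact that a convex set and its closure share the same (relative) interior, so that the density in (1) upgrades to equality of interiors.

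For step (1), let $s\in\cS_\sA$. By the Richter Theorem (\Cref{thm:richter}) there are $k\leq m$, coefficients $c_1,\dots,c_k>0$, and points $x_1,\dots,x_k\in\cX$ with $s=\sum_{i=1}^k c_i\,s_\sA(x_i)$. For $\sigma\in\Sigma$ set $\mu_\sigma:=\sum_{i=1}^k c_i\,\delta_{\sigma,x_i}$; this is a finite mixture, so its moment sequence $s(\sigma)$ lies in $\cT_\sA$. By property~ii) of \Cref{dfn:prob} we have $\int_\cX a_j\,\diff\delta_{\sigma,x_i}\to a_j(x_i)$ as $\sigma\to\sigma_0$ for each basis function $a_j$ and each $i$, hence $s(\sigma)\to\sum_{i=1}^k c_i\,s_\sA(x_i)=s$. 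Choosing a sequence $\sigma_n\to\sigma_0$ in $\Sigma$ (possible because $\sigma_0\in\overline\Sigma$ and $\Sigma$ sits in a metric space) shows $s\in\overline{\cT_\sA}$. Thus $\cS_\sA\subseteq\overline{\cT_\sA}$, and since trivially $\cT_\sA\subseteq\cS_\sA$ we get $\overline{\cT_\sA}=\overline{\cS_\sA}$. For step (2), $\cS_\sA$ is a convex cone because positive measures are closed under addition and positive scaling, and $\cT_\sA$ is a convex cone because a nonnegative linear combination of two finite mixtures is again a finite mixture.

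For step (3), I would use the standard fact from convex analysis that for a convex set $C\subseteq\rset^m$ one has $\mathrm{aff}\,C=\mathrm{aff}\,\overline C$ and $\mathrm{relint}\,C=\mathrm{relint}\,\overline C$ (see, e.g., Rockafellar, \emph{Convex Analysis}, Thm.\ 6.3). Applying this to $C=\cT_\sA$ and $C=\cS_\sA$ and using $\overline{\cT_\sA}=\overline{\cS_\sA}$ gives $\mathrm{aff}\,\cT_\sA=\mathrm{aff}\,\cS_\sA$ and $\mathrm{relint}\,\cT_\sA=\mathrm{relint}\,\cS_\sA$. For a convex set the topological interior coincides with the relative interior when the affine hull equals $\rset^m$ and is empty otherwise; since the two affine hulls agree, it follows that $\inter\cT_\sA=\inter\cS_\sA$. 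The main obstacle is conceptual rather than computational: the perturbation in step (1) only delivers \emph{density} of $\cT_\sA$ in $\cS_\sA$, and density by itself never forces equality of interiors; it is the convexity of both cones — which prevents either from having interior ``holes'' — that makes the argument work, and one must additionally keep track of the distinction between interior and relative interior, which here is harmless precisely because the two affine hulls coincide.
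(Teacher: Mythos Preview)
Your argument is correct. The inclusion $\cT_\sA\subseteq\cS_\sA$ is clear, and the approximation step via Richter's theorem together with property~(ii) of \Cref{dfn:prob} indeed yields $\cS_\sA\subseteq\overline{\cT_\sA}$; convexity of both cones then lets you invoke the standard fact $\mathrm{relint}\,C=\mathrm{relint}\,\overline C$ to conclude. Your closing remark about the relative-versus-topological interior distinction is also handled correctly.

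As for comparison: the paper does not supply its own proof of this statement. \Cref{thm:interMixtures} is quoted verbatim from \cite[Thm.\ 17(ii)]{didio18gaussian} and used as a black box (notably in the proof of \Cref{thm:mixtureBoundBelow}). So there is nothing in the present paper to compare your argument against; you have in effect reconstructed a proof that the paper chose to import by citation. The route you take---Richter to reduce to atomic measures, property~(ii) to approximate Dirac masses by mixture components, then convex analysis to pass from equal closures to equal interiors---is the natural one and is essentially what the cited reference does as well.
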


More on the (truncated) moment problem can be found e.g.\ in \cite{stielt94,shohat43,akhiezClassical,kreinMarkovMomentProblem,kemper68,kemper87,landauMomAMSProc, marshallPosPoly,lauren09,fialkow10,lasserreSemiAlgOpt,schmudMomentBook} and references therein.

\section{Reconstruction of atomic Measures}
\label{sec:atomicRecon}

For one-dimensional moment sequences the atom positions of an atomic representing measure can be determined by the following to results.

\begin{lem}\label{lem:genEigenvalueOneDim}
Let $n,d\in\nset$, $\cX = \cset^n$, and $s = (s_0,s_1,\dots,s_{2d+1})\in\rset^{2d+2}$ with
\[s = \sum_{i=1}^k c_i\cdot s_{\sA_{1,2d+1}}(z_i)\]
for some $z_i\in\cset$, $c_i\in\cset$, and $k\leq d$. Then the $z_i$ are unique and are the eigenvalues of the \emph{generalized eigenvalue problem}
\begin{equation}\label{eq:genEigValProb}
\cH_{d}(M_1 s)v_i = z_i \cH_{d}(s)v_i.
\end{equation}
\end{lem}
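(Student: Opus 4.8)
The plan is to factor both Hankel matrices through a Vandermonde matrix and to recover the $z_i$ as the finite eigenvalues of the resulting pencil. First I would normalise the representation: by merging atoms with equal $z_i$ and deleting those with $c_i=0$ we may assume that $c_1,\dots,c_{k'}\in\cset\setminus\{0\}$ and that $z_1,\dots,z_{k'}\in\cset$ are pairwise distinct, where $k'\le k\le d$; this is the sense in which ``the $z_i$ are unique'' has to be read. Then $s_j=\sum_{i=1}^{k'}c_i z_i^{j}$ for $j=0,\dots,2d+1$. Putting $v_i:=(1,z_i,\dots,z_i^{d})^{T}\in\cset^{d+1}$, $V:=[\,v_1\,|\,\cdots\,|\,v_{k'}\,]$, $C:=\diag(c_1,\dots,c_{k'})$ and $Z:=\diag(z_1,\dots,z_{k'})$, one obtains the factorisations $\cH_d(s)=\sum_{i=1}^{k'}c_i v_i v_i^{T}=VCV^{T}$ and $\cH_d(M_1 s)=\sum_{i=1}^{k'}c_i z_i v_i v_i^{T}=VCZV^{T}$. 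The fact I would single out here is the classical Vandermonde nonsingularity: since the $z_i$ are distinct and $k'\le d+1$, the vectors $v_1,\dots,v_{k'}$ are linearly independent, so $V$ has full column rank $k'$ and $V^{T}$ maps $\cset^{d+1}$ onto $\cset^{k'}$.

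For uniqueness I would argue directly. If $s_j=\sum_{i=1}^{\ell'}d_i w_i^{j}$ (for $j=0,\dots,2d+1$) is a second normalised representation with $\ell'\le d$, then subtracting yields a linear combination of the power-sum sequences attached to the distinct nodes of $\{z_1,\dots,z_{k'}\}\cup\{w_1,\dots,w_{\ell'}\}$ that vanishes for all exponents $j=0,\dots,2d+1$. Since the number of these nodes is at most $k'+\ell'\le 2d<2d+2$, the associated rectangular Vandermonde matrix has full column rank, so all its coefficients are $0$; hence every $z_i$ equals some $w_{i'}$ (otherwise $c_i=0$) and conversely, and the paired weights agree. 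Thus the normalised representation, in particular the multiset $\{z_1,\dots,z_{k'}\}$, is uniquely determined by $s$.

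For the eigenvalue claim one first has to be careful: since only $k'\le d$ atoms occur, $\cH_d(s)=VCV^{T}$ has rank $k'\le d$, so the $(d+1)\times(d+1)$ pencil in (\ref{eq:genEigValProb}) is singular (its characteristic determinant vanishes identically). By ``eigenvalue'' one therefore means a \emph{finite} eigenvalue, i.e.\ a $z\in\cset$ admitting $v\ne 0$ with $\big(\cH_d(M_1 s)-z\,\cH_d(s)\big)v=0$ and $\cH_d(s)\,v\ne 0$; equivalently, an eigenvalue of the pencil after deflating the common kernel $\ker\cH_d(s)=\ker V^{T}$. With this reading the two inclusions are short. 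For fixed $j$, surjectivity of $V^{T}$ yields $v$ with $V^{T}v=e_j$, i.e.\ $v_i^{T}v=\delta_{ij}$, and then $\cH_d(M_1 s)\,v=c_j z_j v_j=z_j\,\cH_d(s)\,v$ with $\cH_d(s)\,v=c_j v_j\ne 0$, so $(z_j,v)$ solves (\ref{eq:genEigValProb}). Conversely, if $\big(\cH_d(M_1 s)-z\,\cH_d(s)\big)v=0$ with $\cH_d(s)\,v\ne 0$, then $\sum_{i=1}^{k'}c_i(z_i-z)(v_i^{T}v)\,v_i=0$; linear independence of the $v_i$ forces $(z_i-z)(v_i^{T}v)=0$ for each $i$, and since $\cH_d(s)\,v=\sum_i c_i(v_i^{T}v)\,v_i\ne 0$ some $v_i^{T}v\ne 0$, whence $z=z_i$. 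Hence the finite eigenvalues of (\ref{eq:genEigValProb}) are precisely $z_1,\dots,z_{k'}$.

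The genuine obstacle is exactly this last point: interpreting ``the eigenvalues of the generalized eigenvalue problem'' when $\cH_d(s)$ is never invertible (because $k\le d$), so that (\ref{eq:genEigValProb}) is a singular pencil; one must pass to the finite eigenvalues — equivalently, compress the pencil to a complement of $\ker V^{T}$ — and retain the side condition $\cH_d(s)\,v\ne 0$ to discard the spurious kernel directions. Making the uniqueness statement precise (and hence the normalisation step) is a milder instance of the same point. By contrast, the Vandermonde factorisation of the two Hankel matrices, the full column rank of $V$, and the bookkeeping that exactly the moments $s_0,\dots,s_{2d+1}$ enter are all routine.
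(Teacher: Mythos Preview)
Your proposal is correct and follows essentially the same route as the paper: both hinge on the Vandermonde factorisation $\cH_d(s)=VCV^{T}$ and $\cH_d(M_1 s)=VCZV^{T}$, from which the eigenvalue claim and uniqueness are read off. The paper's proof consists of exactly these two displayed factorisations and nothing more.

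Where you go further is in making explicit what the paper leaves implicit. You correctly observe that with $k\le d$ atoms the $(d{+}1)\times(d{+}1)$ matrix $\cH_d(s)$ has rank $k'\le d$ and is therefore singular, so the pencil is singular and one must speak of \emph{finite} eigenvalues (deflating the common kernel $\ker V^{T}$). The paper does not mention this at all. Likewise, your normalisation step (merging equal nodes, dropping zero weights) and your separate uniqueness argument via the rectangular Vandermonde of size $(2d{+}2)\times(k'{+}\ell')$ are spelled out, whereas the paper simply asserts that uniqueness ``follows'' from the factorisation. Your two-inclusion argument for the eigenvalue set, using surjectivity of $V^{T}$ to produce dual vectors $v$ with $V^{T}v=e_j$, is also a genuine addition. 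In short: same decomposition, same key identity, but you have supplied the care the paper omits, particularly around the singular-pencil interpretation, and nothing in your write-up is wrong.
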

\begin{proof}
That the $z_i$ are the eigenvalues of (\ref{eq:genEigValProb}) and therefore uniqueness follows from
\[\cH_{d}(s) = (s_{\sA_{1,d}}(z_1),\dots,s_{\sA_{1,d}}(z_k))\cdot\diag(c_1,\dots,c_k)\cdot (s_{\sA_{1,d}}(z_1),\dots,s_{\sA_{1,d}}(z_k))^T\]
and
\begin{multline*}
\cH_{d}(M_1 s) =\\ (s_{\sA_{1,d}}(z_1),\dots,s_{\sA_{1,d}}(z_k))\cdot\diag(c_1 z_1,\dots,c_k z_k)\cdot (s_{\sA_{1,d}}(z_1),\dots,s_{\sA_{1,d}}(z_k))^T.\qedhere
\end{multline*}
\end{proof}

\begin{lem}\label{lem:zerosSupport}
Let $s=(s_0,\dots,s_{2k})\in\rset^{2k+1}$ be a sequence with $\cH(s)\succeq 0$ and $\cH(s)$ is singular (with kernel dimension one). Let $v=(v_k,\dots,v_1,v_0)$ with $\ker\cH(s) = v\cdot\rset$. Then $s$ has a $k$-atomic representing measure $\mu = \sum_{i=1}^k c_i\cdot\delta_{x_i}$ with
\[\{x_1,\dots,x_k\} = \cZ(p) \qquad\text{and}\qquad p(x) = v_k + v_{k-1} x + \dots + v_0 x^k.\]
\end{lem}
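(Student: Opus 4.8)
The plan is to identify the representing measure of $s$ by a rank count. I would use that $s$ is a moment sequence (the standing hypothesis of this section), which in particular guarantees $\cH(s) = (s_{i+j})_{i,j=0}^{k}\succeq 0$. Fix a representing measure $\mu$ and set $u(x):=(1,x,\dots,x^k)^T$, so that $\cH(s)=\int_\rset u(x)u(x)^T\,\diff\mu(x)$. Identify $\rset^{k+1}$ with $\rset[x]_{\leq k}$ by sending a coefficient vector $w$ to the polynomial $q_w$, ordered so that $\cH(s)$ becomes the Gram matrix of $(q,r)\mapsto L_s(qr)$; then $\langle\cH(s)w,w\rangle=\int_\rset q_w(x)^2\,\diff\mu(x)$ for every $w$, and under this identification the kernel vector $v$ corresponds exactly to $p$.

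First I would show $\supp\mu\subseteq\cZ(p)$: since $\cH(s)v=0$ we get $\int_\rset p(x)^2\,\diff\mu(x)=\langle\cH(s)v,v\rangle=0$, so $p$ vanishes $\mu$-almost everywhere. As $v\neq 0$, the polynomial $p$ is not the zero polynomial, hence $\cZ(p)$ is finite and therefore $\mu$ is finitely atomic: $\mu=\sum_{i=1}^{r}c_i\,\delta_{x_i}$ with pairwise distinct $x_i\in\cZ(p)$ and $c_i=\mu(\{x_i\})>0$.

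Next I would run the rank count. From $\mu=\sum_i c_i\delta_{x_i}$ we get $\cH(s)=\sum_{i=1}^{r}c_i\,u(x_i)u(x_i)^T$, and the vectors $u(x_1),\dots,u(x_r)$ are linearly independent because the $x_i$ are distinct (a Vandermonde argument; note $r\leq\#\cZ(p)\leq\deg p\leq k$). Hence $\rank\cH(s)=r$. On the other hand $\cH(s)$ is a $(k+1)\times(k+1)$ matrix with one-dimensional kernel, so $\rank\cH(s)=k$, forcing $r=k$. Finally, $\{x_1,\dots,x_k\}\subseteq\cZ(p)$ consists of $k$ distinct points while $\deg p\leq k$ bounds $\#\cZ(p)\leq k$; therefore $\cZ(p)=\{x_1,\dots,x_k\}$, and en passant $\deg p=k$ (so $v_0\neq 0$) with $p$ having exactly $k$ simple real roots. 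This exhibits $\mu$ as the asserted $k$-atomic representing measure supported on $\cZ(p)$.

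The step I expect to carry the weight is the identity $\rank\cH(s)=\#\supp\mu$: it uses that $s$ is genuinely a moment sequence — so that $\cH(s)$ is an honest nonnegative combination of rank-one matrices $u(x_i)u(x_i)^T$ — together with Vandermonde nonsingularity, and it is also the only place where the moment-sequence hypothesis is essential. Equivalently, one could instead assume $\cH_{k-1}(s)\succ 0$ (i.e. $v_0\neq 0$) and construct $\mu$ directly: the roots of $p$ are then the eigenvalues of multiplication-by-$x$ on $\rset[x]_{\leq k-1}\cong\rset[x]/(p)$, which is self-adjoint for the positive definite form $(q,r)\mapsto L_s(qr)$, hence real and simple, and the positive weights $c_i$ come out of the resulting Vandermonde system using $L_s(q^2)\geq 0$ for $\deg q\leq k$.
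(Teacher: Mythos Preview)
Your proof is correct and follows essentially the same route as the paper's own argument: the paper also computes $L_s(p^2)=v^T\cH(s)v=0$ to get $\supp\mu\subseteq\cZ(p)$, and then invokes the one-dimensional rank identity $k=|\supp\mu|=\rank\cH(s)=|\cZ(p)|$. Your write-up simply unpacks that last line (Vandermonde independence of the $u(x_i)$, $\rank\cH(s)=k$ from $\dim\ker=1$, and $|\cZ(p)|\leq\deg p\leq k$), and like the paper you take the existence of a representing measure $\mu$ as given by context rather than deriving it from the singular-$\cH(s)$ hypothesis.
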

\begin{proof}
$L_s(p^2) = v^T\cH(s)v = 0$, i.e., $\supp\mu\subseteq\cZ(p^2)=\cZ(p)$. Equality holds since we work in the one-dimensional framework: $k = |\supp\mu| = \rank\cH(s) = |\cZ(p)|$.
\end{proof}

Compare the preceding results with Vieta's Formulas (\Cref{lem:vieta}).

\section{Derivatives of Moments and Measures}
\label{sec:derivMomMeas}

The following simple and well-known example from the theory of distributions is our motivation in this section. As in the theory of distributions we denote by $\cD(\Omega) = C_0^\infty(\Omega,\rset)$ the set of all test functions and by $\cD'(\Omega)$ the set of all distributions (continuous linear functionals on $\cD(\Omega)$). Most of our applications and examples will work on $\Omega = \cX = \rset^n$, $n\in\nset$.

\begin{exm}\label{exm:simpleMeasureDerivation}
Let $\mu$ on $\cX = \rset$ be given by $\diff\mu := \chi_{[a,b]}\cdot\diff\lambda$, where $\chi_{[a,b]}$ is the characteristic function of the set $[a,b]$, $a < b$, and $\lambda$ is the Lebesgue measure on $\rset$. For $f\in C^1(\rset,\rset)$ we have
\begin{align*}
\mu(\partial_x f) &= \int_\rset \partial_x f~\diff\mu = \int_a^b f'~\diff\lambda = f(b) - f(a) = -\int_\rset f\cdot \partial_x \chi_{[a,b]}~\diff\lambda\\
&=: -\int_\rset f~\diff(\partial_x\mu) = -(\partial_x \mu)(f)
\end{align*}
where we understand $\partial_x \chi_{[a,b]}$ in the distributional sense \cite{grubbDistributions} and  $\partial_x\mu=\delta_a - \delta_b$ as defined above.
\end{exm}

\subsection*{Derivatives of Moments}

Distribution theory motivates the following definition.

\begin{dfn}\label{dfn:MomentDeriv}
Let $\cA$ be a (finite dimensional) vector space of measurable functions, $L:\cA\rightarrow\rset$ be a linear functional, and $\alpha\in\nset_0^n$. If $\partial^\alpha a_i\in\cA$ for some $a_i\in\cA$ we define the \emph{$\alpha$-th derivative $\partial^\alpha s_i$ of $s_i = L(a_i)$} by
\begin{equation}\label{eq:partialsiDef}
\partial^\alpha s_i\quad :=\quad (-1)^{|\alpha|}\cdot L(\partial^\alpha a_i).
\end{equation}
Let $\sA = \{a_1,\dots,a_m\}$, $m\in\nset\cup\{\infty\}$, be a basis of $\cA$. If $\partial^\alpha\cA\subseteq\cA$, then we define the \emph{$\alpha$-th derivative of the sequence $s = (L(a_1),\dots,L(a_m))$} by
\begin{equation}\label{eq:partialsDef}
\partial^\alpha s\quad :=\quad (\partial^\alpha s_1,\dots,\partial^\alpha s_m)
\end{equation}
or equivalently $\partial^\alpha L$ is defined by
\[(\partial^\alpha L)(a)\quad :=\quad (-1)^{|\alpha|}\cdot L(\partial^\alpha a)\]
for all $a\in\cA$ with $\cA$ finite or infinite dimensional.
\end{dfn}

Since $\partial^\alpha\cA\subseteq\cA$ we can calculate $\partial^\alpha s_i$ directly from $L$.

\begin{lem}\label{lem:momentseqDerivCalc}
If $\partial^\alpha a_i = \sum_{j=1}^k c_j a_j\in{\cA}$, $k\in\nset$, then $\partial^\alpha s_i = (-1)^{|\alpha|}\cdot \sum_{j=1}^k c_j s_j$.
\end{lem}
\begin{proof}
$\partial^\alpha s_i = (-1)^{|\alpha|}\cdot {L}(\partial^\alpha a_i)= (-1)^{|\alpha|}\cdot \sum_{j=1}^k c_j {L}(a_j) = (-1)^{|\alpha|}\cdot \sum_{j=1}^k c_j s_j$.
\end{proof}

This provides us with explicit ways to calculate $\partial^\alpha s$ as the next examples show.

\begin{exm}\label{exm:derivMoments}\
\begin{enumerate}[a)]
\item Let $\sA = \sA_{n,d}$ on $\rset^n$, $s=(s_\alpha)\in\cS_{\sA_{n,d}}$, and $\beta\in\nset_0^n$. We have
\[\partial^\beta s = (\partial^\beta s_\alpha) \quad\text{with}\quad \partial^\beta s_\alpha = (-1)^{|\beta|} \mu(\partial^\beta x^\alpha) = \begin{cases}
(-1)^{|\beta|}\cdot\frac{\alpha!}{(\alpha-\beta)!} s_{\alpha-\beta} & \text{if}\ \beta \leq \alpha,\\
0 & \text{else} \end{cases},\]
see also (\ref{eq:momentDerivOneDim}) in \Cref{exm:gaussianDeriv} for $n=1$.\smallskip

\item Let $\sA = \{\exp(d_1 x),\dots,\exp(d_m x)\}$ on $\rset$ with $d_1 < \dots < d_m$ and $s=(s_i)_{i=1}^m\in\cS_\sA$. Then
$\partial_x^k s = (d_1^k\cdot s_1,\dots,d_m^k\cdot s_m)$.\smallskip

\item Let $\sA = \{\sin x, \cos x, \dots, \sin(k x), \cos(k x)\}$ on $\rset$ (or $[0,2\pi)$) for a $k\in\nset$ and $s=(s_1,s_2,\dots,s_{2k-1},s_{2k})\in\cS_\sA$. Then
\begin{align*}
\partial_x s &= (-s_2,s_1,\dots,-k\cdot s_{2k},k\cdot s_{2k-1}),\\
\partial_x^2 s &= (-s_1,-s_2,\dots,-k^2\cdot s_{2k-1},-k^2\cdot s_{2k}),\\
\partial_x^3 s &= (s_2,-s_1,\dots,k^3\cdot s_{2k},-k^3\cdot s_{2k-1}),\\
\partial_x^4 s &= (s_1,s_2,\dots,k^4\cdot s_{2k-1},k^4\cdot s_{2k}),\quad \text{etc.}
\end{align*}
\end{enumerate}
\end{exm}

Note that $\partial^\alpha$ and $M_\beta$ in \Cref{dfn:multipl} ``almost'' commute.

\begin{lem}\label{lem:commutions}
For $\sA = \sA_{n,d}$, $s = (s_\gamma)\in\rset^{|\sA_{n,d}|}$ we have
\[\begin{pmatrix}\alpha+\gamma\\ \beta\end{pmatrix} M_\alpha \partial^\beta s_\gamma = \begin{pmatrix}\gamma\\ \beta\end{pmatrix} \partial^\beta M_\alpha s_\gamma\quad \text{for all}\ \alpha,\beta,\gamma\in\nset_0^n: |\alpha+\gamma| \leq d.\]
\end{lem}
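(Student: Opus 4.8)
The plan is to evaluate each of the two composite operators in closed form as an explicit scalar multiple of the single moment $s_{\gamma+\alpha-\beta}$, and then to verify that multiplying by the two binomial coefficients appearing in the statement makes the resulting expressions coincide termwise. Throughout I use the multi-index conventions $\gamma! = \prod_i \gamma_i!$ and $\binom{\gamma}{\beta} = \prod_i \binom{\gamma_i}{\beta_i}$, the elementary componentwise identity $\frac{\gamma!}{(\gamma-\beta)!} = \beta!\binom{\gamma}{\beta}$, and the convention $\binom{\mu}{\nu}=0$ whenever $\nu\not\leq\mu$, so that no case distinction on the support is required. The hypothesis $|\alpha+\gamma|\leq d$ guarantees that every index occurring ($\gamma$, $\gamma+\alpha$, and $\gamma+\alpha-\beta$) stays within the truncation, so all moments are defined.

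First I would compute the left operator. By \Cref{exm:derivMoments}a) the differentiation acts on $s_\gamma$ with prefactor governed by the index $\gamma$, giving $\partial^\beta s_\gamma = (-1)^{|\beta|}\frac{\gamma!}{(\gamma-\beta)!}s_{\gamma-\beta}$; applying the shift $M_\alpha$ of \Cref{dfn:multipl} to the remaining moment sends $s_{\gamma-\beta}$ to $s_{\gamma-\beta+\alpha}$ and leaves the prefactor untouched, so that
\[M_\alpha\partial^\beta s_\gamma = (-1)^{|\beta|}\frac{\gamma!}{(\gamma-\beta)!}s_{\gamma+\alpha-\beta} = (-1)^{|\beta|}\beta!\binom{\gamma}{\beta}s_{\gamma+\alpha-\beta}.\]
Next I would compute the right operator in the reversed order: here $M_\alpha$ first sends $s_\gamma$ to $s_{\gamma+\alpha}$, after which \Cref{exm:derivMoments}a) applies with the prefactor now governed by the larger index $\gamma+\alpha$, yielding
\[\partial^\beta M_\alpha s_\gamma = (-1)^{|\beta|}\frac{(\gamma+\alpha)!}{(\gamma+\alpha-\beta)!}s_{\gamma+\alpha-\beta} = (-1)^{|\beta|}\beta!\binom{\gamma+\alpha}{\beta}s_{\gamma+\alpha-\beta}.\]

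Finally I would multiply the first line by $\binom{\alpha+\gamma}{\beta}$ and the second by $\binom{\gamma}{\beta}$. Both products collapse to the symmetric expression $(-1)^{|\beta|}\beta!\binom{\gamma}{\beta}\binom{\alpha+\gamma}{\beta}s_{\gamma+\alpha-\beta}$, which is exactly the asserted identity $\binom{\alpha+\gamma}{\beta}M_\alpha\partial^\beta s_\gamma = \binom{\gamma}{\beta}\partial^\beta M_\alpha s_\gamma$. Because I retained the $\binom{\mu}{\nu}=0$ convention, this equality also holds in the degenerate ranges $\beta\not\leq\gamma$ or $\beta\not\leq\gamma+\alpha$, where one or both sides vanish and the factorials are never evaluated outside their domain.

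The only real content — and the point I expect to be the main obstacle to phrase cleanly — is the asymmetry of the differentiation prefactor under the two orders of composition: in $M_\alpha\partial^\beta s_\gamma$ the factor $\frac{\gamma!}{(\gamma-\beta)!}=\beta!\binom{\gamma}{\beta}$ is dictated by the index $\gamma$ on which $\partial^\beta$ acts directly, whereas in $\partial^\beta M_\alpha s_\gamma$ the prefactor $\frac{(\gamma+\alpha)!}{(\gamma+\alpha-\beta)!}=\beta!\binom{\gamma+\alpha}{\beta}$ is dictated by the already shifted index $\gamma+\alpha$. This is precisely the reason the two operators only \emph{almost} commute, and the mismatch of $\binom{\gamma}{\beta}$ against $\binom{\alpha+\gamma}{\beta}$ is exactly what the two binomial factors in the statement are designed to absorb, so that cross-multiplying each side by the opposite binomial restores equality.
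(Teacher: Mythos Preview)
Your computation is correct and is precisely the routine verification one would expect; the paper in fact states this lemma without proof, so there is nothing to compare against beyond noting that your expansion via \Cref{exm:derivMoments}(a) and \Cref{dfn:multipl} is the intended one-line check.
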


\begin{rem}
When $s$ resp.\ $L$ is a moment sequence/functional, then $\partial s$ resp.\ $\partial L$ (or $-\partial s$ resp.\ $-\partial L$) is in general not a moment sequence. Let $s=(1,1,1)$ be the moment sequence of $\delta_1$ with $\sA = \{1,x,x^2\}$, then $\partial s = (0,-1,-2)$, i.e., $(\partial L)(1) = L_{\partial s}(1) = 0$ but $(\partial L)(x) = L_{\partial s}(x)\neq 0$.
\end{rem}

\begin{lem}
Let $\cA$ be a vector space of measurable functions on the measurable space $(\cX,\fA)$, $1\in\cA$, $\alpha\in\nset_0^n$, $\alpha\neq 0$, $\partial^\alpha\cA\subseteq\cA$, and $L:\cA\rightarrow\rset$ a {linear} functional. The following are equivalent:
\begin{enumerate}[i)]
\item $\partial^\alpha L$ is a moment functional.
\item $\partial^\alpha L = 0$.
\end{enumerate}
\end{lem}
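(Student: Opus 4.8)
The plan is to prove the two implications separately, with the direction ii)$\Rightarrow$i) being immediate: the zero functional is trivially a moment functional, represented by the zero measure (or, if one insists on a genuine measure, it is represented by $\mu\equiv 0$, which integrates every $a\in\cA$). So the content is in i)$\Rightarrow$ii). The key structural fact is that $1\in\cA$ and $\partial^\alpha 1 = 0$ for $\alpha\neq 0$, hence by \Cref{lem:momentseqDerivCalc} (or directly from \Cref{dfn:MomentDeriv}) we get $(\partial^\alpha L)(1) = (-1)^{|\alpha|}L(\partial^\alpha 1) = 0$. Thus $\partial^\alpha L$ always has total mass zero.

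Now suppose $\partial^\alpha L$ is a moment functional, so there is a positive measure $\nu\in\fM_\sA(\partial^\alpha L)$ with $(\partial^\alpha L)(a) = \int_\cX a\,\diff\nu$ for all $a\in\cA$. Testing against $a = 1$ gives $\nu(\cX) = \int_\cX 1\,\diff\nu = (\partial^\alpha L)(1) = 0$. Since $\nu$ is a positive measure with $\nu(\cX)=0$, we conclude $\nu = 0$, and therefore $(\partial^\alpha L)(a) = \int_\cX a\,\diff\nu = 0$ for every $a\in\cA$, i.e.\ $\partial^\alpha L = 0$. This is short enough that there is no real obstacle; the only point requiring a moment of care is making sure the normalization in \Cref{dfn:MomentDeriv} indeed forces $(\partial^\alpha L)(1)=0$, which hinges precisely on $\alpha\neq 0$ (so that $\partial^\alpha$ annihilates the constant function) and on $1$ lying in $\cA$ so that evaluating $\partial^\alpha L$ at $1$ makes sense.

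If one wants to avoid invoking a representing measure and argue purely at the level of functionals, an alternative for i)$\Rightarrow$ii) is: a moment functional $M$ with $M(1)=0$ must be the zero functional, because $M(1)=0$ means the representing measure is the zero measure. Either way the crux is the single observation $(\partial^\alpha L)(1)=0$ combined with positivity of the representing measure forcing it to vanish. I would present the proof in roughly four lines: first $(\partial^\alpha L)(1)=0$; then if $\partial^\alpha L$ is a moment functional with measure $\nu$, evaluate at $1$ to get $\nu(\cX)=0$; then $\nu=0$ by positivity; hence $\partial^\alpha L=0$; the converse is trivial.

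\begin{proof}
Since $\alpha\neq 0$ we have $\partial^\alpha 1 = 0$, and because $1\in\cA$ the value $(\partial^\alpha L)(1)$ is defined and equals $(-1)^{|\alpha|}L(\partial^\alpha 1) = 0$.

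i)$\Rightarrow$ii): Suppose $\partial^\alpha L$ is a moment functional and let $\nu\in\fM_\sA(\partial^\alpha L)$, so $(\partial^\alpha L)(a) = \int_\cX a~\diff\nu$ for all $a\in\cA$. Taking $a = 1$ yields $\nu(\cX) = (\partial^\alpha L)(1) = 0$. As $\nu$ is a positive measure, $\nu(\cX) = 0$ forces $\nu = 0$, hence $(\partial^\alpha L)(a) = \int_\cX a~\diff\nu = 0$ for all $a\in\cA$, i.e.\ $\partial^\alpha L = 0$.

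ii)$\Rightarrow$i): The zero functional is represented by the zero measure, hence is a moment functional.
\end{proof}
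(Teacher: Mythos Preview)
Your proof is correct and follows essentially the same approach as the paper: compute $(\partial^\alpha L)(1) = (-1)^{|\alpha|} L(\partial^\alpha 1) = 0$, then observe that a representing measure $\nu$ of $\partial^\alpha L$ must have $\nu(\cX)=0$ and hence $\nu=0$, so $\partial^\alpha L=0$; the converse is trivial. The paper's version is slightly terser but the argument is identical.
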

\begin{proof}
While (ii) $\Rightarrow$ (i) is clear, for (i) $\Rightarrow$ (ii) let $\mu$ be a representing measure of $\partial^\alpha L$. Then
\[\mu(\cX) = (\partial^\alpha L)(1) = (-1)^{|\alpha|}\cdot L(\partial^\alpha 1) = 0,\]
i.e., $\mu = 0$ and therefore $\partial^\alpha L = 0$.
\end{proof}

\begin{rem}
Let $\cA$ be a vector space of measurable functions on the measurable space $(\cX,\fA)$, $\alpha\in\nset_0^n$, $\alpha\neq 0$ such that $\partial^\alpha\cA\subseteq\cA$. The following are equivalent:
\begin{enumerate}[i)]
\item For every linear functional $L:\cA\rightarrow\rset$ there exists a $K:\cA\rightarrow\rset$ with $\partial^\alpha K = L$.
\item $\partial^\alpha:\cA\rightarrow\cA$ is {injective}.
\end{enumerate}
Indeed, $\partial^\alpha:\cA\rightarrow \cA$ is injective, if and only if the induced endomorphism $$\cA^*\to\cA^*,\, K\mapsto (-1)^{|\alpha|}\cdot (K\circ\partial^\alpha)=\partial^\alpha K$$ of the dual space is surjective. In \Cref{exm:derivMoments} (a) $\partial^\alpha$ ($\alpha\neq 0$) is not injective, in (c) $\partial^k$ is injective, and in (b) $\partial^k$ is injective if and only if $d_i\neq 0$ for all $i=1,\dots,m$.
\end{rem}

\subsection*{Derivatives of Measures}

In \Cref{exm:simpleMeasureDerivation} we have seen that for the specific measure $\mu$ with $\diff\mu = \chi_{[a,b]}~\diff\lambda$ the derivative is $\partial_x\mu = \delta_a - \delta_b$, of course in the distributional sense:
\[(\partial_x\mu)(f)=f(a)-f(b)=(\delta_a-\delta_b)(f)\quad\text{for all}\ f\in C_0^\infty(\rset,\rset).\]
Here we make use of the notation $\mu(f)$ for $\int f~\diff\mu$ from the theory of distributions that comes in very handy. Note that we can even choose $f\in C^\infty(\rset,\rset)$ since $\supp\mu$ is compact and therefore compactness of $\supp f$ can be omitted. For the rest of this section we want to define $\partial^\alpha\mu$ for measures $\mu$, especially $\mu\in\fM_\sA(s)$, if it exists.

\begin{dfn}\label{dfn:measureDeriv}
Let $\cA$ be a (finite dimensional) vector space of measurable functions, $\mu$ a (signed) measure and $\alpha\in\nset_0^n$. Assume that $\partial^\alpha\cA\subseteq\cA$ and there exists a $\nu\in\cD'(\cX)$ such that
\begin{equation}\label{eq:nuDef}
\nu(f) = (-1)^{|\alpha|}\cdot \mu(\partial^\alpha f)\quad\text{for all}\ f\in\cD(\cX).
\end{equation}
If $\nu$ is a (signed) measure such that all $a\in\cA$ are $\nu$-integrable, then we say the \emph{$\alpha$-th derivative $\partial^\alpha\mu$ of $\mu$ exists on $\cA$} and is defined by
\[\partial^\alpha\mu\ :=\ \nu.\]
\end{dfn}

The following statement, which connects \Cref{dfn:MomentDeriv} with \Cref{dfn:measureDeriv}, is the crucial observation of this section. It enables us to apply results from the theory of distributions to derivatives of moment functionals.

\begin{thm}\label{thm:derivMeasureMoments}
Let $\cA$ be a (finite or infinite dimensional) vector space of measurable functions on the measurable space $(\cX,\fA)$, $L:\cA\rightarrow\rset$ be a moment functional with representing measure $\mu$, and $\alpha\in\nset_0^n$ such that $\partial^\alpha\cA\subseteq\cA$. If $\partial^\alpha\mu$ exists on $\cA$, then $\partial^\alpha\mu$ is a (signed) representing measure of $\partial^\alpha L$, i.e.,
\begin{equation}
\mu(a) = s_a \qquad\folgt\qquad (\partial^\alpha\mu)(a) = \partial^\alpha s_a\qquad \text{for all}\ a\in\cA.
\end{equation}
\end{thm}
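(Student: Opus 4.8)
The statement is essentially a bookkeeping identity: we must show that the distribution $\nu = \partial^\alpha\mu$ from \Cref{dfn:measureDeriv}, which by hypothesis happens to be a signed measure with all $a \in \cA$ integrable against it, reproduces the moments $\partial^\alpha s_a$ of $\partial^\alpha L$. The plan is to fix an arbitrary $a \in \cA$ and chase the definitions. By \Cref{dfn:measureDeriv}, $\nu$ is characterized by $\nu(f) = (-1)^{|\alpha|}\cdot\mu(\partial^\alpha f)$ for all test functions $f \in \cD(\cX)$; on the other hand, by \Cref{dfn:MomentDeriv}, $(\partial^\alpha L)(a) = (-1)^{|\alpha|}\cdot L(\partial^\alpha a) = (-1)^{|\alpha|}\cdot \mu(\partial^\alpha a)$, using that $\mu$ represents $L$ and that $\partial^\alpha a \in \cA$ by the assumption $\partial^\alpha\cA\subseteq\cA$. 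So the desired conclusion $\nu(a) = (\partial^\alpha L)(a) = \partial^\alpha s_a$ would follow immediately from the defining relation for $\nu$ \emph{if $a$ were a test function}. The only real work is to pass from $f \in \cD(\cX)$ to general $a \in \cA$, i.e. to upgrade the identity $\nu(f) = (-1)^{|\alpha|}\mu(\partial^\alpha f)$ from test functions to the (possibly non-smooth, non-compactly-supported) elements of $\cA$.

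First I would record the two computations above as displayed equalities and isolate the single gap: we know $\int_\cX a\,\diff\nu$ and $\int_\cX (\partial^\alpha a)\,\diff\mu$ are both finite (the former by the integrability hypothesis in \Cref{dfn:measureDeriv}, the latter because $\partial^\alpha a \in \cA$ and $\mu \in \fM_\sA$), and we want $\int_\cX a\,\diff\nu = (-1)^{|\alpha|}\int_\cX (\partial^\alpha a)\,\diff\mu$. The natural route is approximation: choose a sequence $(f_j) \subseteq \cD(\cX)$ with $f_j \to a$ and $\partial^\alpha f_j \to \partial^\alpha a$ in a mode strong enough that $\nu(f_j) \to \nu(a)$ and $\mu(\partial^\alpha f_j) \to \mu(\partial^\alpha a)$; then the test-function identity passes to the limit. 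On $\cX = \rset^n$ one can build such $f_j$ by mollification together with a cutoff $\chi(x/j)$ for a fixed $\chi \in \cD$ equal to $1$ near the origin, and invoke dominated convergence once one has local integrability of $a$ and of $\partial^\alpha a$ against $|\mu|$ and $|\nu|$ together with suitable growth/decay control. I expect the cleanest formulation is to assume (or note it holds in all the examples of interest) that $\cA$ consists of functions smooth enough and that $\mu, \nu$ have finite total variation when $\cA$ contains $1$, or more simply to state the theorem with the standing regularity that makes this approximation legitimate; the paper's surrounding setup (e.g. $\cA$ a space of polynomials or of $C^\infty$ functions, $\mu$ with all moments finite) supplies exactly this.

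\textbf{Main obstacle.} The crux is therefore \emph{not} the algebra but the analytic justification that the distributional identity \eqref{eq:nuDef}, which a priori only tests against $\cD(\cX)$, extends to pairing with elements of $\cA$. If $\cA$ contains functions that are unbounded or not compactly supported — e.g. the monomials, or $\exp(d_i x)$ on $\rset$ — then one cannot simply quote "$\nu$ agrees with $\partial^\alpha\mu$ on $\cA$" for free; one needs the integrability clause built into \Cref{dfn:measureDeriv} (``all $a\in\cA$ are $\nu$-integrable'') to run a dominated-convergence argument, and one needs $\mu$ to integrate $\partial^\alpha a$, which \Cref{dfn:MomentDeriv} and $\partial^\alpha\cA\subseteq\cA$ guarantee. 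I would make the approximation explicit once (mollify-and-truncate on $\rset^n$, with the truncation chosen so that its derivatives of order $\le|\alpha|$ stay bounded), verify the two limits, and conclude. Everything else is: unravel \Cref{dfn:measureDeriv}, unravel \Cref{dfn:MomentDeriv}, use $\mu \in \fM_\sA(L)$, and match.
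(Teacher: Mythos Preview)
Your proposal is correct and follows the same route as the paper---unravel \Cref{dfn:measureDeriv} and \Cref{dfn:MomentDeriv} and use that $\mu$ represents $L$---but you are actually more careful than the paper itself. The paper's proof is the one-line chain
\[
(\partial^\alpha\mu)(a)\;=\;\nu(a)\;\overset{(\ref{eq:nuDef})}{=}\;(-1)^{|\alpha|}\mu(\partial^\alpha a)\;=\;\partial^\alpha s_a,
\]
which applies the defining relation \eqref{eq:nuDef} directly to $a\in\cA$ even though \eqref{eq:nuDef} is only stated for test functions $f\in\cD(\cX)$. In other words, the paper silently identifies the extension from $\cD(\cX)$ to $\cA$ with the hypothesis ``$\partial^\alpha\mu$ exists on $\cA$'' and does not spell out the approximation you flag as the main obstacle. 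Your mollify-and-truncate argument is exactly the missing justification; it is not needed to match the paper's level of detail, but it is what makes the step honest.
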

\begin{proof}
Since $\partial^\alpha\mu(a)$ exists for all $a\in\cA$ we have
\[(\partial^\alpha\mu)(a) \overset{\text{Def.\ \ref{dfn:measureDeriv}}}{=} \nu(a) \overset{(\ref{eq:nuDef})}{=} (-1)^{|\alpha|}\cdot\mu(\partial^\alpha a) \overset{\text{Def.\ \ref{dfn:MomentDeriv}}}{=} \partial^\alpha s_a.\qedhere\]
\end{proof}

\begin{rem}
\Cref{thm:derivMeasureMoments} says that we can compute the derivative of a moment functional $L$ on $\cA$ by taking the derivative of a representing measure $\mu$ (if its derivative exists on $\cA$) and vice versa. In particular, the result does not depend on the choice of the representing measure. 
\end{rem}

\begin{exm}\label{exm:deltaDeriv}
Let $x\in\cX\subset\rset^n$, $\alpha\in\nset_0^n$, $\cA\subset C^{|\alpha|}(\cX,\rset)$, and $\mu = \delta_x$, then $\partial^\alpha\mu = \partial^\alpha\delta_x$ is given by
\[(\partial^{\alpha}\delta_x)(a) := (-1)^{|\alpha|}\cdot \frac{\partial^{|\alpha|} a}{\partial x^\alpha}(x),\,\textrm{ for all }a\in\cA.\]
Hence, $\delta_x$ is an example of a measure whose derivative is no longer a measure. 
\end{exm}

Besides the Dirac measures also measures of the form $f~\diff\lambda^n$ are very important, where $\lambda^n$ is the $n$-dimensional Lebesgue measure and $f$ is a measurable function.

\begin{dfn}[{\cite[Eq.\ (3.2)]{grubbDistributions}}]
Let $f\in L^1_\loc(\cX)$ and $\lambda^n$ the $n$-dimensional Lebesgue measure on $\cX$. We define the distribution $\Lambda_f$ by
\[\Lambda_f(g)\quad :=\quad \int_\cX g(x) f(x) ~\diff\lambda^n(x),\qquad\text{for all}\ g\in\cD(\cX).\]
\end{dfn}

\begin{thm}[{\cite[Eqs.\ (3.15) and (3.21)]{grubbDistributions}}]\label{thm:LamdaDeriv}
Let $\alpha\in\nset_0^n$. Then
\begin{equation}\label{eq:LamdaDeriv}
\partial^\alpha \Lambda_f = \Lambda_{\partial^\alpha f},\qquad\text{for all}\ f\in L^1_\loc(\cX).
\end{equation}
\end{thm}

If $\partial^\alpha\Lambda_f$ exists on $\cA$, then by \Cref{thm:derivMeasureMoments} we have
\[{(}\partial^\alpha \Lambda_{f}{)}(a) = \Lambda_{\partial^\alpha f}(a) = (-1)^{|\alpha|}\cdot\Lambda_f(\partial^\alpha a) = \partial^\alpha s_a.\]

The following example will be most important in the reconstruction of polytopes and simple functions from their moments, see \Cref{sec:applicDerivMom}.

\begin{exm}\label{exm:areaIntegralDerivative}
Let $f:\rset\rightarrow\rset$ be a continuous and piece-wise linear function with compact support. Let $\xi_1 < \dots < \xi_k$ be the points where $f$ is not differentiable. Then $(\Lambda_f)' = \sum_{i=1}^{k-1} c_i\cdot \chi_{(\xi_i,\xi_{i+1}]}$ and $(\Lambda_f)'' = \sum_{i=1}^k (c_i-c_{i-1})\cdot \delta_{\xi_i}$ where $c_i = \frac{f(\xi_{i+1})-f(\xi_i)}{\xi_{i+1}-\xi_i}$ for $i=1,\ldots,k-1$ and $c_0=c_k=0$ are the slopes of $f$. In particular, $(\Lambda_f)''$ is a signed $k$-atomic measure.
\end{exm}

\begin{exm}\label{exm:rectangleDeriv}
Let $p_{i,j}\in\rset$ be points, $i=1,\dots,n$ and $j=0,1$.
We define the $n$-dimensional hyperrectangle $\square_p$ of $p=(p_{1,0},p_{1,1},\dots,p_{n,0},p_{n,1})\in\rset^{2n}$ by
\[\square_p := [p_{1,0},p_{1,1}]\times\dots\times [p_{n,0},p_{n,1}]\subset\rset^n.\]
The vertices of $\square_p$ are $p_J = (p_{1,j_1},\dots,p_{n,j_n})$ for all $J = (j_1,\dots,j_n)\in \{0,1\}^n$. Since $\square_p$ is compact all moments
\[s_\alpha := \int_{\rset^n} x^\alpha\cdot \chi_p(x)~\diff\lambda^n(x) = \int_{\rset^n} x^\alpha~\diff\Lambda_{\chi_p}(x)\]
for $\alpha\in\nset^n_0$ exist. Here we abbreviated the characteristic function $\chi_{\square_p}$ of $\square_p$ as $\chi_{p}$. Set $\one := (1,\dots,1)$. From the Definitions \ref{dfn:MomentDeriv} and \ref{dfn:measureDeriv} as well as \Cref{thm:LamdaDeriv} we find that
\begin{equation}
\partial^\one s\quad\text{with}\quad \partial^\one s_\alpha = \begin{cases} (-1)^n\cdot \alpha_1\cdots\alpha_n\cdot s_{\alpha-\one} & \text{for}\ \one\leq \alpha,\\ 0 & \text{else}\end{cases} \quad\forall\alpha\in\nset_0^n
\end{equation}
has the signed representing measure
\begin{equation}\label{eq:rectangleMeasureDeriv}
\partial^\one \Lambda_{\chi_p} = \sum_{J\in\{0,1\}^n} (-1)^{|J|} \cdot \delta_{p_J}
\end{equation}
supported only at the vertices $p_J$ of $\square_p$ where $|J| = j_1+\dots+j_n$.
\end{exm}

Gaussian distributions will be considered in \Cref{sec:gaussian}.

\begin{exm}\label{exm:gaussianDeriv}
For $\Lambda_f$ with $f(x) = \exp(-|ax-b|^k)$ all $i$-th moments
\[s_i:=\int_\rset x^i~\diff\Lambda_f(x)\quad\forall i\in\nset_0\]
exist where $a\in\rset_{>0}$, $b\in\rset$, and $k > 0$. For $l\in\nset_0$ we find from the Definitions \ref{dfn:MomentDeriv} and \ref{dfn:measureDeriv} as well as \Cref{thm:LamdaDeriv} that
\begin{equation}\label{eq:momentDerivOneDim}
\partial^l s \quad\text{with}\quad \partial^l s_i = \begin{cases} 0 & \text{for}\ i=1,\dots,l-1,\\ \frac{(-1)^l\cdot i!}{(i-l)!}\cdot s_{i-l} & \text{for}\ i=l,l+1,\dots \end{cases}
\end{equation}
has a signed representing measure given by
\[\Lambda_{\partial^l f} \quad\text{with}\quad \partial^l f = h_l\cdot f\]
for suitable polynomials $h_l$. For $k=2$ we have $h_l(x) = (-a)^l\cdot H_l(ax-b)$ where $H_l$ is the $l$-th Hermite polynomial:
\[H_l(x) = (-1)^l\cdot l! \cdot \sum_{l_1+2l_2=l} \frac{(-1)^{l_1+l_2}}{l_1!\cdot l_2!} (2x)^{l_1}.\]
\end{exm}

\section{Applications}
\label{sec:applicDerivMom}

\subsection*{Polytope Reconstruction}
The problem of reconstructing a (convex {and full-dimensional}) polytope $P\subset\rset^n$, i.e., finding all vertices, is an extensively studied question and several algorithms have been proposed, see e.g.\ \cite{balins61,manas68,mathei80,lee82,milanf95,golub99,becker07, gravin12,gravin14,gravin18,kohn18}, and references therein.

Based on derivatives of moments we will present a simple proof of one version of these algorithms which calculates the vertices from finitely many moments
\[s_\alpha = \int x^\alpha\cdot \chi_P~\diff\lambda^n(x).\]
We use the Brion--Lawrence--Khovanskii--Pukhlikov--Barvinok (BBaKLP) formulas \cite{brion88,lawrence91,barvin91,pukhli92,barvin92} and the generalized eigenvalue problem (as in \Cref{lem:genEigenvalueOneDim}). The aim is to convince the reader that derivatives of moments are a convenient tool for proving and extending the statement in a concise and conceptual way.

Let us state the BBaKLP formulas. This presentation is taken from \cite{gravin12}. Let $P$ be a polytope in $\rset^n$ with vertices $v_1,\dots,v_k$ ($k\geq n+1$), then
\begin{equation}\label{eq:bbaklpFormulaOne}
0 \quad = \quad \sum_{i=1}^k \langle v_i,r\rangle^j \tilde{D}_{v_i}(r) \qquad\text{for all}\ j=0,\dots,n-1,
\end{equation}
see \cite[Eq.\ (3)]{gravin12}, and for $j=n,n+1,\dots$ we have
\begin{equation}\label{eq:bbaklpFormulaTwo}
\int_P \langle x,r\rangle^j~\diff\lambda^n(x)\quad =:\quad s_j(r)\quad =\quad \frac{j!(-1)^n}{(j+n)!}\sum_{i=1}^k \langle v_i,r\rangle^{j+n} \tilde{D}_{v_i}(r), 
\end{equation}
see \cite[Eq.\ (4)]{gravin12}, where $\tilde{D}_{v_i}(r)$ is a rational function on $r\in\rset^n$, i.e., $r$ can be chosen in general position such that $\tilde{D}_{v_i}(\,\cdot\,)$ has no zero or pole at $r$. The $s_j(r)$ is the \emph{$j$-th directional moment} with direction $r$.

\begin{dfn}\label{dfn:areaFunction}
Let $k,n\in\nset$, $P$ be a polytope with vertices $v_1,\dots,v_k\in\rset^n$, $r\in\rset^n\setminus\{0\}$ a vector (of length 1), $a\in\rset$, and $H_{r,a} := \{x\in\rset^n \,|\, \langle r,x\rangle=a\}$ be an affine hyperplane with normal vector $r$. We define the \emph{area function} $\Theta_{P,r}$ to be the $(n-1)$-dimensional volume of $P\cap H_{r,x}$
\[\Theta_{P,r}:\rset\rightarrow\rset,\ x\mapsto\Theta_{P,r}(x) := \mathrm{vol}_{n-1}(P\cap H_{r,x}) = \int_{H_{r,x}} \chi_P(y)~\diff\lambda^{n-1}(y)\]
where $\lambda^{n-1}$ is the $(n-1)$-dimensional Lebesgue measure on $H_{r,x}$.
\end{dfn}

Of course, the area function is integration by parts
\[s_j(r)\quad =\quad \int_{\rset^n} \langle x,r\rangle^j\cdot \chi_P~\diff\lambda^n(x)\quad =\quad \int_{\rset} y^j\cdot\Theta_{P,r}(y)~\diff\lambda(y).\]
The area function $\Theta_{P,r}$ is a continuous piecewise polynomial function of degree $n$ if $r$ is not a normal vector of any facet of $P$. \Cref{exm:areaIntegralDerivative} motivates the following lemma which is the only step where we need the BBaKLP formulas.

\begin{lem}\label{lem:areaFunctionDeriv}
Let $r\in\rset^n$ be a vector of unit length such that $\tilde{D}_{v_i}(r)$ is non-zero and well-defined, i.e., its numerator and denominator is non-zero. Then
\begin{equation}\label{eq:theraDerivBBaKLP}
\partial^n \Lambda_{\Theta_{P,r}}\quad =\quad \sum_{i=1}^k \tilde{D}_{v_i}(r)\cdot\delta_{\langle r,v_i\rangle}.
\end{equation}
\end{lem}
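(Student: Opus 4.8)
The plan is to compute the distributional derivative $\partial^n \Lambda_{\Theta_{P,r}}$ directly from the BBaKLP formulas, exactly paralleling \Cref{exm:areaIntegralDerivative}. First I would observe that since $r$ is not a normal vector of any facet of $P$ (guaranteed by the non-degeneracy hypothesis on $\tilde D_{v_i}(r)$), the area function $\Theta_{P,r}$ is a continuous, compactly supported, piecewise polynomial function of degree exactly $n$; its breakpoints are precisely the values $a_i := \langle r, v_i\rangle$ where the hyperplane $H_{r,a}$ passes through a vertex. By \Cref{thm:LamdaDeriv} we have $\partial^n\Lambda_{\Theta_{P,r}} = \Lambda_{\partial^n\Theta_{P,r}}$, and since each polynomial piece has degree $n$, the $n$-th derivative of $\Theta_{P,r}$ (taken first in the classical sense on each open subinterval, then corrected by the jump discontinuities produced by differentiating $n$ times a $C^{n-2}$ piecewise-$C^\infty$ function) is a finite sum of Dirac deltas located at the $a_i$, say $\partial^n\Lambda_{\Theta_{P,r}} = \sum_i \lambda_i\,\delta_{a_i}$ for some real coefficients $\lambda_i$ to be identified.

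The key step is to pin down the coefficients $\lambda_i$ by testing against monomials. For $f(x) = x^j$ with $j \geq n$, the defining relation \eqref{eq:nuDef} gives
\[
\bigl(\partial^n\Lambda_{\Theta_{P,r}}\bigr)(x^j) \;=\; (-1)^n\,\Lambda_{\Theta_{P,r}}\bigl((x^j)^{(n)}\bigr) \;=\; (-1)^n\cdot\frac{j!}{(j-n)!}\int_\rset x^{j-n}\,\Theta_{P,r}(x)~\diff\lambda(x) \;=\; (-1)^n\cdot\frac{j!}{(j-n)!}\, s_{j-n}(r).
\]
On the other hand $\sum_i \lambda_i\,\delta_{a_i}(x^j) = \sum_i \lambda_i\,\langle r,v_i\rangle^j$. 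Substituting \eqref{eq:bbaklpFormulaTwo} (with index $j-n$, which runs over $0,1,2,\dots$ as $j$ runs over $n,n+1,\dots$) converts the left-hand side into $\sum_i \langle r,v_i\rangle^{j}\,\tilde D_{v_i}(r)$. Thus $\sum_i \lambda_i\,\langle r,v_i\rangle^j = \sum_i \tilde D_{v_i}(r)\,\langle r,v_i\rangle^j$ for all $j\geq n$; combined with \eqref{eq:bbaklpFormulaOne}, which gives the same identity for $j = 0,\dots,n-1$ with $\lambda_i$ replaced by $\tilde D_{v_i}(r)$ on the right and by the deltas' action on $x^j$ — here one uses that $\partial^n$ annihilates polynomials of degree $<n$, so $\sum_i \lambda_i\,\langle r,v_i\rangle^j = 0$ for $j<n$ as well — we get that the two measures $\sum_i\lambda_i\,\delta_{a_i}$ and $\sum_i \tilde D_{v_i}(r)\,\delta_{a_i}$ agree on all monomials, hence on all polynomials.

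Finally I would conclude that a signed atomic measure supported on the finite set $\{a_i\}$ is determined by its moments against $1, x, x^2, \dots$: the map sending such a measure to its moment sequence is injective because a Vandermonde-type system in the distinct nodes $a_i$ is invertible (grouping equal $a_i$ and summing the corresponding coefficients first if several vertices share a directional value). Therefore $\lambda_i = \tilde D_{v_i}(r)$ (after the same grouping on the right), which is exactly \eqref{eq:theraDerivBBaKLP}. The main obstacle is bookkeeping rather than depth: one must be careful that several vertices $v_i$ may project to the same $a_i = \langle r,v_i\rangle$, so the identification of coefficients is really an identification of the aggregated weights $\sum_{i:\,\langle r,v_i\rangle = a}\tilde D_{v_i}(r)$ at each node $a$, and one must also check that the non-degeneracy assumption on $r$ genuinely guarantees $\Theta_{P,r}$ has degree exactly $n$ on each piece (so that no spurious lower-order behaviour spoils the claim that $\partial^n\Lambda_{\Theta_{P,r}}$ is purely atomic).
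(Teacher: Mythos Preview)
Your core computation---testing $\partial^n\Lambda_{\Theta_{P,r}}$ against all monomials and rewriting the result via the BBaKLP formulas \eqref{eq:bbaklpFormulaOne} and \eqref{eq:bbaklpFormulaTwo}---is exactly what the paper does. The difference is in how you close the argument. You first argue that $\partial^n\Lambda_{\Theta_{P,r}}$ is \emph{a priori} a finite combination of Dirac deltas at the projected vertices, and then identify the unknown weights with the $\tilde D_{v_i}(r)$ via a Vandermonde system. The paper skips the structural step entirely: it simply shows that the compactly supported distribution $\partial^n\Lambda_{\Theta_{P,r}}$ and the atomic measure $\sum_i \tilde D_{v_i}(r)\,\delta_{\langle r,v_i\rangle}$ agree on every monomial, and then invokes density of polynomials in $C^\infty(K)$ (Weierstra\ss) to conclude equality as distributions.

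The paper's route buys you something concrete: it completely avoids having to verify that $\partial^n\Lambda_{\Theta_{P,r}}$ is purely atomic. Your last paragraph correctly flags this as the main obstacle, but slightly misdiagnoses it. The danger is not that the polynomial pieces of $\Theta_{P,r}$ might have degree below $n$ (incidentally, the degree is $n-1$, not $n$; lower degree on a piece only contributes zeros, not trouble). The real issue is the \emph{smoothness at the breakpoints}: for $\partial^n\Lambda_{\Theta_{P,r}}$ to contain no $\delta'$-terms you need $\Theta_{P,r}\in C^{n-2}$, which depends on the local combinatorics at each vertex (e.g., simplicity of $P$) and on the precise meaning of the non-degeneracy hypothesis on $r$. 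None of this needs to be checked in the density argument, since both sides are compactly supported distributions and equality on polynomials suffices.
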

\begin{proof}
Set $y:=\langle x,r\rangle$. From (\ref{eq:bbaklpFormulaOne}) for $j=0,\dots,n-1$ we have
\begin{align*}
\int y^j\cdot \partial^n\Theta_{P,r}(y)~\diff y \overset{(*)}{=} (-1)^n\int \partial^n y^j\cdot \Theta_{P,r}(y)~\diff y = 0 = \sum_{i=1}^k \langle v_i,r\rangle^j \tilde{D}_{v_i}(r)
\end{align*}
and from (\ref{eq:bbaklpFormulaTwo}) with $j'\geq 0$ we have
\begin{align*}
\int y^{n+j'}\cdot \partial^n\Theta_{P,r}(y)~\diff y
&\overset{(+)}{=} (-1)^n\int \partial^n y^{n+j'}\cdot \Theta_{P,r}(y)~\diff y\\
&= \frac{(-1)^n (n+j')!}{j'!} \int y^{j'}\cdot \Theta_{P,r}(y)~\diff y
= \sum_{i=1}^k \langle v_i,r\rangle^{j'+n} \tilde{D}_{v_i}(r).
\end{align*}
Here $(*)$ and $(+)$ hold since $\supp\Theta_{P,r}$ is compact. Thus the claim follows since the set of polynomial functions on a compact set $K$ is dense in $C^\infty(K)$.
\end{proof}

In the previous proof the BBaKLP formulas were used for all monomials $y^j$ ($j\in\nset_0$) and the Weiserstra\ss{} Theorem gives the assertion. But the proof of the lemma can be weakened to the M\"untz--Sz\'asz Theorem \cite{muntz14,szasz16}, i.e., only monomials $\{y^{d_i}\}_{i\in\nset}$ with $\sum_{i\in\in\nset} \frac{1}{d_i} = \infty$ (and $d_1=0$) are necessary. Additionally, the BBaKLP formulas hold only for polynomials but the previous lemma applies to all $C^n$-functions. So we have the following.

\begin{thm}\label{thm:derivNonPoly}
Let $\cA$ be a (finite-dimensional) vector space of measurable functions on $\rset$ with basis $\sA = \{a_1,a_2,\dots\}$ such that $\partial\cA\subseteq\cA$, i.e., $\partial^d\cA\subseteq\cA$ for all $d\in\nset$. Let $P\subset\rset^n$ be a polytope with vertices $v_1,\dots,v_k$, $k\geq n+1$, $r\in\rset^n$ be such that it is neither a pole nor a zero of any $\tilde{D}_{v_i}(\,\cdot\,)$, and consider the directional moments
\[s_j\quad=\quad s_j(r)\quad :=\quad \int_P a_j(\langle x,r\rangle)~\diff\lambda^n(x).\]
Then $\partial^n s$ has an at most $k$-atomic signed representing measure
\[\partial^n\Lambda_{\Theta_{P,r}}\quad =\quad \sum_{i=1}^k \tilde{D}_{v_i}(r)\cdot\delta_{\langle v_i,r\rangle}\]
supported only at the projections $\langle v_i,r\rangle$ of the vertices $v_i$.
\end{thm}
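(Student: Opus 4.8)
The plan is to reduce \Cref{thm:derivNonPoly} to \Cref{lem:areaFunctionDeriv} by a density argument, exactly mirroring how the lemma itself is proved from the BBaKLP formulas. First I would observe that the hypothesis $\partial\cA\subseteq\cA$ is only needed so that the derivatives $\partial^n s$ of the moment \emph{sequence} $s=(s_j)_j$ are well-defined in the sense of \Cref{dfn:MomentDeriv}; the analytic content lives entirely on the measure side. So the first step is to set $y:=\langle x,r\rangle$ and rewrite the directional moments as
\[
s_j\;=\;s_j(r)\;=\;\int_P a_j(\langle x,r\rangle)~\diff\lambda^n(x)\;=\;\int_\rset a_j(y)\cdot\Theta_{P,r}(y)~\diff\lambda(y)\;=\;\Lambda_{\Theta_{P,r}}(a_j),
\]
using Fubini and \Cref{dfn:areaFunction}, so that $\Lambda_{\Theta_{P,r}}$ is a representing measure of $s$ on $\cA$. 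Since $r$ is by hypothesis neither a pole nor a zero of any $\tilde D_{v_i}(\,\cdot\,)$, it is in particular not a normal vector of any facet of $P$, so $\Theta_{P,r}$ is a continuous, compactly supported, piecewise polynomial function of degree $n$; in particular $\Theta_{P,r}\in C^{n-1}$ with $\partial^n\Theta_{P,r}$ a signed atomic object, and $\Lambda_{\Theta_{P,r}}\in L^1_{\loc}(\rset)$, so \Cref{thm:LamdaDeriv} applies.

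The second step is to invoke \Cref{lem:areaFunctionDeriv} directly: it gives, in $\cD'(\rset)$,
\[
\partial^n\Lambda_{\Theta_{P,r}}\;=\;\sum_{i=1}^k \tilde D_{v_i}(r)\cdot\delta_{\langle r,v_i\rangle},
\]
which is a signed measure supported at the (at most $k$) points $\langle v_i,r\rangle$. This is a genuine signed Radon measure with compact support, hence every $a\in\cA$ — being a measurable function, and in fact finite at each of the finitely many atoms — is integrable against it. Therefore, by \Cref{dfn:measureDeriv}, the $n$-th derivative $\partial^n\Lambda_{\Theta_{P,r}}$ exists on $\cA$, and by \Cref{thm:derivMeasureMoments} it is a signed representing measure of $\partial^n L$, equivalently of $\partial^n s=(\partial^n s_j)_j$. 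Spelling this out via \Cref{dfn:MomentDeriv},
\[
(\partial^n\Lambda_{\Theta_{P,r}})(a_j)\;=\;(-1)^n\cdot\Lambda_{\Theta_{P,r}}(\partial^n a_j)\;=\;(-1)^n\cdot\int_\rset (\partial^n a_j)(y)\cdot\Theta_{P,r}(y)~\diff\lambda(y)\;=\;\partial^n s_j,
\]
where the last equality uses \Cref{dfn:MomentDeriv} again, so the claimed $k$-atomic signed representation holds.

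The only subtlety I foresee — and the point I would be most careful about — is that \Cref{lem:areaFunctionDeriv} as stated is proved by testing against all monomials $y^j$, $j\in\nset_0$, and using density of polynomials in $C^\infty(K)$ on a compact set $K\supseteq\supp\Theta_{P,r}$; here instead our test space is the finite-dimensional $\cA$. But this is not actually an obstacle: the identity $\partial^n\Lambda_{\Theta_{P,r}}=\sum_i \tilde D_{v_i}(r)\,\delta_{\langle r,v_i\rangle}$ is an equality of distributions in $\cD'(\rset)$, established independently of $\cA$, and restricting a distributional identity to evaluation on the particular functions $a_1,a_2,\dots\in C^n(\rset)$ requires nothing beyond $a_j\in C^n$ so that $\partial^n a_j$ makes sense and compact support of $\Theta_{P,r}$ so that the pairing is the ordinary integral. (This is also where the remark following \Cref{lem:areaFunctionDeriv} — that the proof only needs a M\"untz--Sz\'asz family of exponents and extends from polynomials to all $C^n$-functions — does the real work, since the BBaKLP formulas themselves are only stated for polynomial $\langle x,r\rangle^j$.) So the proof is genuinely just: rewrite the moments via the area function, differentiate the area-function measure using \Cref{lem:areaFunctionDeriv}, and transfer back to $\partial^n s$ via \Cref{thm:derivMeasureMoments}; the atom count $k$ is immediate from the $k$ vertices, and it may drop below $k$ when two vertices have the same projection $\langle v_i,r\rangle$.
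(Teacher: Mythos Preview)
Your proposal is correct and follows essentially the same route as the paper: rewrite the directional moments via the area function so that $\Lambda_{\Theta_{P,r}}$ represents $s$, invoke \Cref{lem:areaFunctionDeriv} to identify $\partial^n\Lambda_{\Theta_{P,r}}$ as the atomic measure on the projected vertices, and then transfer this to $\partial^n s$ via \Cref{thm:derivMeasureMoments}. Your version is more detailed---you explicitly verify that $\partial^n\Lambda_{\Theta_{P,r}}$ exists on $\cA$ in the sense of \Cref{dfn:measureDeriv}, and you note why the distributional identity from \Cref{lem:areaFunctionDeriv} may be paired against arbitrary $C^n$-functions---but the skeleton is identical to the paper's two-line proof.
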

\begin{proof}
Since $s$ has the representing measure $\Lambda_{\Theta_{P,r}}$, the $\partial^n s$ has the at most $k$-atomic representing (signed) measure $\partial^n\Lambda_{\Theta_{P,r}} = \sum_{i=1}^k \tilde{D}_{v_i}(r)\cdot\delta_{\langle v_i,r\rangle}$ by \Cref{thm:derivMeasureMoments} and \Cref{lem:areaFunctionDeriv}.
\end{proof}

What remains is to extract the positions $\langle v_i,r\rangle$ from $\partial^n s$. If $\cA$ consists of polynomials, the generalized eigenvalue problem in \Cref{lem:genEigenvalueOneDim} can be applied. From this we easily get the following corollary, cf.\ e.g.\ \cite[Main Theorem]{gravin12}. Note that we propose to replace Prony's Method/Vandermonde factorization of finite Hankel matrices by the (numerically more stable) generalized eigenvalue problem (as in \Cref{lem:genEigenvalueOneDim}), see \cite[p.\ 1225]{golub99}. For simplicity we assume uniform distribution on $P$. Polynomial distributions on semi-algebraic sets are treated below.

\begin{cor}\label{cor:polytopeFromMoments}
Let $P\subset\rset^n$ be a polytope with vertices $v_1,\dots,v_k$, $k\geq n+1$ and let $r\in\rset^n$ be such that it is neither a pole nor a zero of any $\tilde{D}_{v_i}(\,\cdot\,)$, and for $j=0,\dots,2k-n+1$ let $s_j = s_j(r)$ be the directional moments
\[s_j\quad =\quad \int_P \langle x,r\rangle^j~\diff\lambda^n(x).\]
Then the projections $\xi_i := \langle v_i,r\rangle$ are the eigenvalues of the generalized eigenvalue problem
\begin{equation}\label{eq:genEigValProbPoly}
\cH_k(M_1\partial^n s)y_i\quad =\quad \xi_i\cdot \cH_k(\partial^n s) y_i.
\end{equation}
\end{cor}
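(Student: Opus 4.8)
The plan is to reduce \Cref{cor:polytopeFromMoments} to an application of \Cref{thm:derivNonPoly} (with $\cA = \rset[x]_{\leq 2k-n+1}$, $a_j(x) = x^j$) followed by \Cref{lem:genEigenvalueOneDim}. First I would invoke \Cref{thm:derivNonPoly}: since the polynomials $a_j(x) = x^j$ satisfy $\partial\cA\subseteq\cA$ and the directional moments $s_j = \int_P \langle x,r\rangle^j\,\diff\lambda^n(x)$ are exactly the moments of the area function $\Theta_{P,r}$ via the integration-by-parts identity $s_j(r) = \int_\rset y^j\,\Theta_{P,r}(y)\,\diff\lambda(y)$, the theorem yields that $\partial^n s$ has the signed $k$-atomic representing measure
\[
\partial^n\Lambda_{\Theta_{P,r}}\quad =\quad \sum_{i=1}^k \tilde{D}_{v_i}(r)\cdot\delta_{\langle v_i,r\rangle}.
\]
Here one uses the hypothesis that $r$ is neither a pole nor a zero of any $\tilde{D}_{v_i}(\,\cdot\,)$, so that all coefficients $\tilde{D}_{v_i}(r)$ are finite and nonzero and the atoms $\xi_i = \langle v_i,r\rangle$ genuinely appear.

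Next I would translate this into the language of \Cref{lem:genEigenvalueOneDim}. Writing $t := \partial^n s$, the displayed measure says precisely that $t$ is the sequence $\sum_{i=1}^k c_i\cdot s_{\sA_{1,m}}(\xi_i)$ with $c_i = \tilde{D}_{v_i}(r)\in\cset$ and $\xi_i\in\rset\subset\cset$, where the length $m$ of the available truncation needs to be at least $2d+1$ in the notation of the lemma. Since $\partial^n$ lowers the moment index by $n$, knowing $s_0,\dots,s_{2k-n+1}$ gives us $t_0,\dots,t_{2k+1}$ — exactly the $2k+2$ entries needed to form $\cH_k(t)$ and $\cH_k(M_1 t)$ and to apply the lemma with $d = k$. (One should double-check the off-by-one bookkeeping here: \Cref{exm:derivMoments}(a) gives $\partial^n s_\alpha = (-1)^n\frac{\alpha!}{(\alpha-n)!}s_{\alpha-n}$ for $\alpha\geq n$, so $t_j$ is a nonzero multiple of $s_{j-n}$ for $j\geq n$ and $t_j = 0$ for $j<n$; the top index we need is $t_{2k+1}$, i.e.\ $s_{2k+1-n} = s_{2k-n+1}$, which matches the stated range.) The number of atoms is $k$, so the condition "$k\leq d$" of \Cref{lem:genEigenvalueOneDim} holds with equality.

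Then \Cref{lem:genEigenvalueOneDim} applies verbatim: the $\xi_i$ are unique and are exactly the eigenvalues of
\[
\cH_k(M_1\partial^n s)y_i\quad =\quad \xi_i\cdot\cH_k(\partial^n s)y_i,
\]
which is the claim. I expect the only real subtlety — and the thing worth spelling out carefully rather than the heart of a proof — is the index bookkeeping: confirming that "$s_0,\dots,s_{2k-n+1}$ known" supplies exactly the entries of $\partial^n s$ up to index $2k+1$ that \Cref{lem:genEigenvalueOneDim} consumes, and noting that the leading zeros $t_0 = \dots = t_{n-1} = 0$ are harmless because the Hankel matrices $\cH_k(t)$, $\cH_k(M_1 t)$ only involve $t_0,\dots,t_{2k}$ and $t_1,\dots,t_{2k+1}$ respectively with the factorization of \Cref{lem:genEigenvalueOneDim} still valid. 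No genuine obstacle arises beyond assembling these pieces; the conceptual work was already done in \Cref{thm:derivNonPoly} and \Cref{lem:areaFunctionDeriv}.
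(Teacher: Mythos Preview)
Your proposal is correct and follows essentially the same route as the paper: invoke \Cref{thm:derivNonPoly} (equivalently \Cref{thm:derivMeasureMoments} together with \Cref{lem:areaFunctionDeriv}) to obtain the signed $k$-atomic representing measure $\partial^n\Lambda_{\Theta_{P,r}} = \sum_{i=1}^k \tilde{D}_{v_i}(r)\,\delta_{\langle v_i,r\rangle}$ for $\partial^n s$, and then apply \Cref{lem:genEigenvalueOneDim} to read off the atom positions as generalized eigenvalues. Your additional index bookkeeping---checking that $s_0,\dots,s_{2k-n+1}$ suffices to determine $(\partial^n s)_0,\dots,(\partial^n s)_{2k+1}$---is a useful sanity check that the paper leaves implicit, but the underlying argument is the same.
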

\begin{proof}
As in \Cref{thm:derivNonPoly} $s = (s_i)_{i=0}^{2k+1}$ has the representing measure $\Lambda_{\Theta_{P,r}}$ and $\partial^n s$ has the at most $k$-atomic representing (signed) measure $\partial^n\Lambda_{\Theta_{P,r}} = \sum_{i=1}^k \tilde{D}_{v_i}(r)\cdot\delta_{\langle v_i,r\rangle}$ by \Cref{thm:derivMeasureMoments} and \Cref{lem:areaFunctionDeriv}. By \Cref{lem:genEigenvalueOneDim} the positions $\xi_i = \langle v_i,r\rangle$ are the eigenvalues of the generalized eigenvalue problem (\ref{eq:genEigValProbPoly}).
\end{proof}

\begin{rem}
Besides the simple proof, the method of derivatives of moments has another advantage. Since \Cref{lem:areaFunctionDeriv} holds in the distributional sense, \Cref{thm:derivNonPoly} holds for more general functions $a_i$, especially non-polynomial directional moments like in \Cref{exm:derivMoments}(b) or (c). However, the generalized eigenvalue problem must then be replaced by a suitable method to determine the atoms $\delta_{\xi_i}$ from $\partial^n s$.
\end{rem}

\begin{rem}
In \cite[Eq.\ (5)]{gravin12} a ``scaled vector of moments'' is defined in a similar way as $\partial^n s$ in \Cref{exm:derivMoments}(a). However, the strength of \Cref{thm:derivMeasureMoments}, in particular in combination with \Cref{thm:LamdaDeriv}, has not been used.
\end{rem}

\begin{rem}\label{rem:gridpoly}
With $n+1$ different directions $r$ the vertices can be reconstructed using the previous theorem and $(n+1)(2k-n) + 1$ moments are required. If $k$ is unknown, the previous theorem also determines $k$ if sufficiently many directional moments are given.
\end{rem}

Now we extend \Cref{dfn:areaFunction} to functions $f$:
\begin{equation}\label{eq:ThetaDfnFunctionIntegral}
\Theta_{f,r}(x) := \int_{H_{r,x}} f(y)~\diff\lambda^{n-1}(y),
\end{equation}
i.e., integration by part over $H_{r,x}$.

By linearity of integration and differentiation \Cref{cor:polytopeFromMoments} also detects the vertices $v_{i,j}$, $j=1,\dots,d_i$, of full-dimensional polytopes $P_i\subset\rset^n$, $j=1,\dots,p$, from the moments
\begin{equation}\label{eq:polyMomentsSimple}
s_k(r):=\int_{\rset^n}\langle x,r\rangle^k\cdot\chi(x)~\diff\lambda^n(x)
\end{equation}
of the simple function
\begin{equation}\label{eq:polytopeSimpleFunc}
\chi := \sum_{i=1}^p c_i\cdot \chi_{P_i} \qquad(c_i\in\rset,\ c_i\neq0)
\end{equation}
if the $P_i$ or $c_i$ are in general position. We say that a set $\{P_i\}_{i=1}^p$ of polytopes is \emph{in general position} iff $v_{i,j}\neq v_{i',j'}$ for all $(i,j)\neq (i',j')$. Furthermore, we say that $c_1,\dots,c_p$ are \emph{in general position} iff
\begin{equation}\label{eq:polytopeAtomicMeasure}
\mu=\sum_{i=1}^p \sum_{j=1}^{d_i} c_i\cdot\tilde{D}_{v_{i,j}}(r)\cdot\delta_{\langle v_{i,j},r\rangle}
\end{equation}
has non-zero mass $\mu(\langle v_{i,j}, r\rangle)\neq 0$ for $r\in\rset^n$ in general position, i.e., coefficients in (\ref{eq:polytopeAtomicMeasure}) do not cancel out for vertices $v_{i,j}$ with the same projection $\langle v_{i,j},r\rangle$.

\begin{thm}\label{thm:polySimple}
Let $P_i\subset\rset^n$, $i=1,\dots,p$, be full-dimensional polytopes with vertices $v_{i,j}$, $j=1,\dots,d_i$. Let the vertices $v_{i,j}$ or $c_1,\dots,c_p$ be in general position. Let $d:= d_1+\dots+d_p$. Then for a direction $r\in\rset^n$ in general position the projections $\xi_{i,j}:=\langle r,v_{i,j}\rangle$ are the eigenvalues of the generalized eigenvalue problem
\begin{equation}\label{eq:genEigValProbPolySimple}
\cH_d(M_1\partial^n s)y_{i,j}=\xi_{i,j}\cH_d(\partial^n s) y_{i,j}
\end{equation}
where $s_0,\dots,s_{2d-n+1}$ are the directional moments (\ref{eq:polyMomentsSimple}) of (\ref{eq:polytopeSimpleFunc}).
\end{thm}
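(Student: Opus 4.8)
The plan is to reduce the statement to \Cref{cor:polytopeFromMoments} by linearity, the only genuinely new point being the bookkeeping that the two ``general position'' hypotheses rule out cancellation of atoms. First I would extend the area function via (\ref{eq:ThetaDfnFunctionIntegral}) and note that integration over $H_{r,x}$ is linear, so $\Theta_{\chi,r}=\sum_{i=1}^{p}c_i\,\Theta_{P_i,r}$; since each $P_i$ is compact, $\Theta_{\chi,r}\in L^1_\loc(\rset)$ is compactly supported (piecewise polynomial of degree $\le n$ when $r$ is no facet normal), and $\int_\rset y^k\Theta_{\chi,r}(y)\,\diff\lambda(y)=\int_{\rset^n}\langle x,r\rangle^k\chi(x)\,\diff\lambda^n(x)=s_k$, i.e.\ $\Lambda_{\Theta_{\chi,r}}$ represents the directional moment sequence $s$ of (\ref{eq:polyMomentsSimple}). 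Applying $\partial^n$ distributionally and using linearity of $f\mapsto\Lambda_f$ and of $\partial^n$ together with \Cref{lem:areaFunctionDeriv} for each $P_i$ (legitimate because the general position of $r$ keeps every $\tilde D_{v_{i,j}}(r)$ well-defined and non-zero), I obtain
\[
\partial^n\Lambda_{\Theta_{\chi,r}}=\sum_{i=1}^{p}c_i\,\partial^n\Lambda_{\Theta_{P_i,r}}=\sum_{i=1}^{p}\sum_{j=1}^{d_i}c_i\,\tilde D_{v_{i,j}}(r)\cdot\delta_{\langle v_{i,j},r\rangle}=\mu,
\]
the signed measure of (\ref{eq:polytopeAtomicMeasure}). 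Every polynomial is $\mu$-integrable, so $\partial^n\Lambda_{\Theta_{\chi,r}}$ exists on $\cA$ and \Cref{thm:derivMeasureMoments} shows that $\mu$ is a signed representing measure of $\partial^n L_s$; equivalently $\partial^n s$ — which by \Cref{exm:derivMoments}(a) vanishes in degrees $<n$ and is determined from $s_0,\dots,s_{2d-n+1}$ up to index $2d+1$ — is the moment sequence of $\mu$.

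Next I would argue that $\mu$ is $k$-atomic with $k\le d:=d_1+\dots+d_p$, all atoms having non-zero mass and pairwise distinct support. For $r$ in general position, $\langle v_{i,j},r\rangle=\langle v_{i',j'},r\rangle$ forces $v_{i,j}=v_{i',j'}$, since otherwise $r$ lies on the hyperplane $(v_{i,j}-v_{i',j'})^{\perp}$. If the $\{P_i\}$ are in general position the $v_{i,j}$ are pairwise distinct, hence so are the projections $\xi_{i,j}=\langle v_{i,j},r\rangle$, and $\mu$ has exactly $d$ atoms with masses $c_i\,\tilde D_{v_{i,j}}(r)\neq 0$ (recall $c_i\neq 0$ and $r$ is not a zero of $\tilde D_{v_{i,j}}$). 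If instead $c_1,\dots,c_p$ are in general position, then by the very definition of that notion the total mass $\sum_{(i',j'):\,v_{i',j'}=v_{i,j}}c_{i'}\,\tilde D_{v_{i',j'}}(r)$ accumulated at each projection $\xi_{i,j}$ is non-zero. In both cases $\mu=\sum_{l=1}^{k}w_l\,\delta_{\zeta_l}$ with $k\le d$, the $\zeta_l$ pairwise distinct and forming the set $\{\xi_{i,j}\}$, and all $w_l\neq 0$.

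Finally, since $\partial^n s$ is the moment sequence of this $k$-atomic ($k\le d$) measure and the entries $\partial^n s_0,\dots,\partial^n s_{2d+1}$ are available from the given $s_0,\dots,s_{2d-n+1}$, I would invoke \Cref{lem:genEigenvalueOneDim} with its parameter equal to our $d$: the atom positions $\zeta_l$, i.e.\ precisely the distinct values among $\xi_{i,j}=\langle r,v_{i,j}\rangle$, are the (finite) eigenvalues of $\cH_d(M_1\partial^n s)\,y=\xi\,\cH_d(\partial^n s)\,y$, which is (\ref{eq:genEigValProbPolySimple}). The one step deserving care is the penultimate one, namely the precise use of the two general-position hypotheses to guarantee that no atom of $\mu$ disappears through cancellation and that $\mu$ therefore genuinely ``sees'' all projections $\langle r,v_{i,j}\rangle$; everything else is the distributional apparatus of \Cref{sec:derivMomMeas} applied exactly as in the proof of \Cref{cor:polytopeFromMoments}.
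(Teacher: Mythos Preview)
Your proposal is correct and follows essentially the same route as the paper: linearity of $\partial^n$ and of $f\mapsto\Lambda_f$ together with \Cref{lem:areaFunctionDeriv} give the atomic signed measure (\ref{eq:polytopeAtomicMeasure}) as a representing measure of $\partial^n s$ via \Cref{thm:derivMeasureMoments}, the general-position hypotheses ensure no atom cancels, and \Cref{lem:genEigenvalueOneDim} finishes. Your version is simply more explicit about the moment-index bookkeeping and the case split between the two general-position assumptions, which the paper compresses into one sentence.
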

\begin{proof}
By linearity of $\partial^n$ and \Cref{lem:areaFunctionDeriv} we have that
\[\partial^n \Lambda_{\Theta_{\xi,r}} = \sum_{i=1}^p c_i\cdot \partial^n\Lambda_{\Theta_{P_i,r}} = \sum_{i=1}^p \sum_{j=1}^{d_i} c_i\cdot\tilde{D}_{v_{i,j}}(r)\cdot\delta_{\langle v_{i,j},r\rangle}\]
is a (signed) representing measure of $\partial^n s$ (\Cref{thm:derivMeasureMoments}). Then $(\partial^n \Lambda_{\Theta_{\xi,r}})(\langle r,v_{i,j}\rangle)$ $\neq 0$ for all $i,j$ since the $v_{i,j}$ or $c_i$ are in general position. Hence the projections $\langle r,v_{i,j}\rangle$ are the eigenvalues of (\ref{eq:genEigValProbPolySimple}) by \Cref{lem:genEigenvalueOneDim}.
\end{proof}

\subsection*{Reconstruction of Simple Functions from Moments}
We want to adapt \Cref{thm:polySimple} to simple functions
\[\chi = \sum_{j=1}^k c_j\cdot\chi_{\square_j}\] 
of hyperrectangles $\square_j$, see \Cref{exm:rectangleDeriv}. Similar to polytopes we say that the hyperrectangles $\square_j$ are in \emph{general position} if no two facets of the $\square_j$'s lie in a common hyperplane. The $c_j$'s are called in \emph{general position} if $\partial_i \Lambda_{\Theta_{\chi,e_i}}$ is an at most $2k$-atomic signed measure supported exactly at $p_{j,i,a}$ ($j=1,\dots,k$, $a=0,1$) and $\partial^\one\Theta_{\chi,r}$ is an at most $k\cdot 2^n$-atomic signed measure supported exactly at all $p_{j,i,a}$ ($j=1,\dots,k$, $i=1,\dots,n$, $a=0,1$). We have the following.

\begin{thm}
Let $k,n\in\nset$ and
\begin{equation}\label{eq:linCarac}
\chi\quad =\quad \sum_{j=1}^k c_j\cdot\chi_{\square_j} \qquad\text{with}\qquad c_j\neq 0
\end{equation}
the simple function of hyperrectangles $\square_j$ with $c_1,\dots,c_k$ or $\square_1,\dots,\square_k$ in general position. Consider the moments
\[s_\alpha\quad :=\quad \int x^\alpha\cdot f(x)~\diff\lambda^n(x) \qquad \alpha\in\nset_0^n.\]
Then for each $i=1,\dots,n$ we have
\[\{p_{1,i,0},p_{1,i,1},\dots,p_{k,i,0},p_{k,i,1}\}\quad =\quad\{\xi_{i,1},\dots,\xi_{i,2k}\},\]
i.e., the vertices of the hyperrectangles $\square_j$ are contained in the grid
\begin{equation}\label{eq:gridCarac}
\{\xi_{1,1},\dots,\xi_{1,2k}\}\times\dots\times\{\xi_{n,1},\dots,\xi_{n,2k}\},
\end{equation}
where the $\xi_{i,j}$ are the $2k$ eigenvalues of the generalized eigenvalue problem
\begin{equation}\label{eq:genEigValProbRectangle}
\cH\Big(M_{e_i}\partial_i (s_{l\cdot e_i})_{l=0}^{4k}\Big)y_j\quad =\quad \xi_{i,j}\cdot\cH\Big(\partial_i (s_{l\cdot e_i})_{l=0}^{4k}\Big)y_j
\end{equation}
\end{thm}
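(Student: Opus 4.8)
The strategy is to reduce the $n$-dimensional statement, one coordinate direction at a time, to the one-dimensional generalized eigenvalue problem of \Cref{lem:genEigenvalueOneDim}, exactly as in the proof of \Cref{thm:polySimple}, but now taking only a single partial derivative $\partial_i$ rather than the full mixed derivative $\partial^\one$. First I would fix a coordinate index $i\in\{1,\dots,n\}$ and pass from the simple function $\chi$ to its area function in the direction $r=e_i$, namely $\Theta_{\chi,e_i}$ as in \eqref{eq:ThetaDfnFunctionIntegral}. Slicing a box $\square_j$ by the hyperplanes $\{x_i=t\}$ produces a function that is piecewise \emph{constant} in $t$ (the cross-sectional $(n-1)$-volume of a hyperrectangle does not vary with $t$ inside the box and jumps at the two faces $x_i=p_{j,i,0}$ and $x_i=p_{j,i,1}$); hence $\Theta_{\chi,e_i}$ is a finite linear combination of characteristic functions of intervals. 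By \Cref{exm:areaIntegralDerivative} (the $(\Lambda_f)'$ computation, or directly \Cref{exm:simpleMeasureDerivation} applied to each interval) its first derivative $\partial_i\Lambda_{\Theta_{\chi,e_i}}$ is a signed measure supported at the endpoints $p_{j,i,a}$, $j=1,\dots,k$, $a\in\{0,1\}$, i.e.\ an at most $2k$-atomic signed measure; the general-position hypothesis on the $c_j$ (or on the $\square_j$) guarantees all $2k$ masses are nonzero and the atoms are distinct, so it is \emph{exactly} $2k$-atomic.

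Next I would identify the moments of this measure with the sequence appearing in \eqref{eq:genEigValProbRectangle}. By \Cref{thm:derivMeasureMoments} together with \Cref{thm:LamdaDeriv}, for every polynomial $a$ in one variable we have $(\partial_i\Lambda_{\Theta_{\chi,e_i}})(a) = (-1)\cdot\Lambda_{\Theta_{\chi,e_i}}(a') = \partial_i s_a$, and since $\Lambda_{\Theta_{\chi,e_i}}$ integrates $t^l$ against the directional moment $s_{l\cdot e_i}=\int x^{l\cdot e_i}\chi\,\diff\lambda^n$ (this is integration by parts in the $e_i$-direction, exactly the identity displayed after \Cref{dfn:areaFunction}), the one-dimensional moment sequence of $\partial_i\Lambda_{\Theta_{\chi,e_i}}$ is precisely $\partial_i(s_{l\cdot e_i})_{l}$. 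Thus $\partial_i\Lambda_{\Theta_{\chi,e_i}}=\sum_{j,a}\tilde c_{j,i,a}\,\delta_{p_{j,i,a}}$ with $2k$ nonzero weights is a $2k$-atomic signed representing measure of $\partial_i(s_{l\cdot e_i})_{l=0}^{4k+1}$, and \Cref{lem:genEigenvalueOneDim} (applicable since $2k\le d$ with $2d+1=4k+1$, i.e.\ we need moments up to index $4k+1$; one should state the moment range so that the two Hankel matrices $\cH$ in \eqref{eq:genEigValProbRectangle} are $2k\times2k$) then yields that the $2k$ atom positions $p_{j,i,a}$ are exactly the eigenvalues $\xi_{i,1},\dots,\xi_{i,2k}$ of \eqref{eq:genEigValProbRectangle}. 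Finally, since each vertex of each $\square_j$ has all its coordinates among the computed $\xi_{i,\cdot}$, every vertex lies in the product grid \eqref{eq:gridCarac}, which is the asserted conclusion.

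The routine parts are the piecewise-constant structure of $\Theta_{\chi,e_i}$ and the bookkeeping of the integration-by-parts identity; these follow the template already established for \Cref{thm:polySimple} and \Cref{cor:polytopeFromMoments}. The genuine point requiring care — and the step I expect to be the main obstacle — is the \textbf{general-position argument}: one must verify that the $2k$ atoms of $\partial_i\Lambda_{\Theta_{\chi,e_i}}$ are genuinely distinct and carry nonzero weight, because otherwise the Hankel matrices in \eqref{eq:genEigValProbRectangle} drop rank and the generalized eigenvalue problem returns fewer than $2k$ eigenvalues, failing to recover all grid coordinates. This is exactly why the definition of ``$c_j$ in general position'' above was phrased in terms of $\partial_i\Lambda_{\Theta_{\chi,e_i}}$ being \emph{exactly} $2k$-atomic and $\partial^\one\Theta_{\chi,r}$ being exactly $k\cdot 2^n$-atomic; the proof should explicitly invoke this hypothesis at the moment it concludes the two Hankel matrices have full rank $2k$, so that \Cref{lem:genEigenvalueOneDim} applies. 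A secondary, cosmetic issue is that \eqref{eq:genEigValProbRectangle} as displayed uses moments $(s_{l\cdot e_i})_{l=0}^{4k}$ while \Cref{lem:genEigenvalueOneDim} with $k$ atoms replaced by $2k$ atoms wants indices up to $2(2k)+1=4k+1$; I would reconcile this by stating the moment requirement as $s_0,\dots,s_{4k+1}$ (the paper already did this implicitly in \Cref{cor:polytopeFromMoments} with ``$2k-n+1$'').
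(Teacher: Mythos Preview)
Your proposal is correct and follows essentially the same route as the paper: fix $i$, observe that the area function $\Theta_{\chi,e_i}$ is piecewise constant with jumps exactly at the $p_{j,i,a}$ (by the general-position hypothesis), so that $\partial_i\Lambda_{\Theta_{\chi,e_i}}$ is a $2k$-atomic signed representing measure of $\partial_i(s_{l\cdot e_i})_l$ via \Cref{thm:derivMeasureMoments}, and then extract the atom positions with \Cref{lem:genEigenvalueOneDim}. You are also right to flag the index range; the paper's own proof silently uses $(s_{l\cdot e_i})_{l=0}^{4k+1}$ rather than the $4k$ displayed in the statement.
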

\begin{proof}
$t=(s_{l\cdot e_i})_{l=0}^{4k+1}$ are the moments of the area function $\Theta_{f,e_i}$ which has by assumption jumps exactly at the $p_{j,i,l}$'s, $j=1,\dots,k$, $l\in\{0,1\}$. Hence $\partial_i t$ is represented by a signed atomic measure supported exactly at the $p_{j,i,l}$'s by \Cref{thm:derivMeasureMoments} and the positions are gained from the generalized eigenvalue problem (\Cref{lem:genEigenvalueOneDim})
\end{proof}

\begin{rem}\label{rem:polySimple}
For the grid (\ref{eq:gridCarac}) we can then chose an $r\in\rset^n$ in general position such that $\xi\mapsto\langle\xi,r \rangle$ between grid points $\xi$ and their projection $\langle\xi, r\rangle$ is a bijection. Since the $c_j$'s or $\square_j$'s are in general position we can extract these projections from \Cref{thm:polySimple} and uniquely recover the vertices of all $\square_j$'s. The $c_j$'s can then easily (successively) be calculated from evaluation polynomials and $\partial^\one s$. 
\end{rem}

Compared to \Cref{cor:polytopeFromMoments} and \Cref{thm:polySimple} we no longer have the disadvantage that we need to chose $n+1$ random directions $r_i$. We can choose the directions $e_1,\dots,e_n$ and only $r$ in \Cref{rem:polySimple} needs to be in general direction but can be chosen based on the grid (\ref{eq:gridCarac}) from the $e_i$'s. We need to solve $n$ generalized eigenvalue problems (\ref{eq:genEigValProbRectangle}) of size at most $(2k+1)\times (2k+1)$. The choice of $e_i$ is essential so that we cover $2^{n-1}$ vertices of the same $\square_j$ by $\Theta_{\chi,e_i}$ at once and hence get $n$ small generalized eigenvalue problems. Only when we cut the vertices of $\square_j$ out of the grid (\ref{eq:gridCarac}) we need to go to much higher degrees and have to solve one much larger generalized eigenvalue problem based on \Cref{thm:polySimple}. But better options for cutting the vertices $p_{j,J}$ out of (\ref{eq:gridCarac}) might be possible.

\subsection*{Reconstruction of Measures on Semi-Algebraic Sets}
So far we avoided to deal with non-constant densities on bounded sets. Inspired by the work of F.\ Br\'ehard, M.\ Joldes, and \mbox{J.-B.}\ Lasserre \cite{brehard19unpub} we want to demonstrate how our approach can be applied in this case. This and the previous works \cite{lasserre08}, \cite{henrion14}, \cite{marx18arxiv} from (optimal) control applications of the moment-SOS-hierarchy were pointed out to us by the authors of \cite{brehard19unpub}.

Let $G\subseteq\rset^n$ be a semi-algebraic set and $g\in\rset[x_1,\dots,x_n]$ such that $\partial G\subseteq \cZ(g)$. For $f\in C^\infty(\rset^n,\rset)$ and $\Lambda\in\cD(\rset^n)'$ we have the Leibniz formula $\partial_{i}(f\cdot\Lambda) = \partial_{i}f \cdot \Lambda + f\cdot \partial_{i}\Lambda$ \cite[Lem.\ 3.7]{grubbDistributions} and if $\Lambda = \chi_G$ then $\partial_{i} \chi_G$ acts on test functions $\varphi\in\cD(\rset^n)$ as (weighted) $(n-1)$-dimensional Lebesgue measure supported on $\partial G$, i.e., $(\partial_{i}\chi_G)(g\cdot\varphi)=0$ for all $\varphi\in\cD(\rset^n)$ \cite[p.\ 33]{grubbDistributions}. For $g=\sum_\alpha g_\alpha x^\alpha$ we set $g(M):= \sum_\alpha g_\alpha M_\alpha$, where the $M_\alpha$ are the shifts from \Cref{dfn:multipl}. Remember the matrix notation $(s,t,\dots,z)_l$ from \Cref{dfn:matrixlimit}.

\begin{thm}[{\cite[Thm.\ 1]{brehard19unpub}}]\label{thm:joldes}
Let $G\subseteq\rset$ be a semi-algebraic set, let $g\in\rset[x_1,\dots,x_n]$ with $\gamma:=\deg g$ and $\partial G \subseteq\cZ(g)$, $p\in\rset[x_1,\dots,x_n]$ with $d := \deg p$, and $s_\alpha$ the moments of $\exp(p)\cdot\chi_G$,
\[s_\alpha\quad :=\quad \int_G x^\alpha\cdot \exp(p(x))~\diff\lambda^n(x),\]
for all $\alpha\in\nset_0^n$ with $|\alpha|\leq k$ for some $k \geq 2d + 2\gamma -2$. The following are equivalent:
\begin{enumerate}[i)]
\item $p=\sum_{\alpha\in\nset_0:|\alpha|\leq d} c_\alpha\cdot x^\alpha$.

\item For each $i=1,\dots,n$ let $\alpha^{(1)},\alpha^{(2)},\dots,\alpha^{(m)}$ with $m=\binom{n+d-1}n$ denote an enumeration of $\alpha=(\alpha_1,\dots,\alpha_n)\in\nset_0^n$ with $|\alpha|\leq d$ and $\alpha_i\geq 1$. The kernel of
\begin{equation}\label{eq:lasserre}
(g(M)\partial_{x_i} s,\ g(M)M_{\alpha^{(1)}-e_i}s,\ \dots,\ g(M)M_{\alpha^{(m)}-e_i}s)_{k-d}
\end{equation}
is spanned by $(1,-\alpha_i^{(1)}\cdot c_{\alpha^{(1)}},\dots,-\alpha_i^{(m)}\cdot c_{\alpha^{(m)}})$ for every $i=1,\dots,n$.
\end{enumerate}
$c_0$ is determined by normalization. If $g\geq 0$ on $G$ then $k\geq 2d + \gamma -2$ is sufficient.
\end{thm}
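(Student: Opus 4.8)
The plan is to exploit the distributional calculus developed in \Cref{sec:derivMomMeas}, exactly in the spirit of the preceding theorems. Write $\mu := \exp(p)\cdot\chi_G$, so that $s_\alpha = \Lambda_\mu(x^\alpha)$. The starting point is the Leibniz rule $\partial_{x_i}(\exp(p)\cdot\chi_G) = (\partial_{x_i}p)\cdot\exp(p)\cdot\chi_G + \exp(p)\cdot\partial_{x_i}\chi_G$ in $\cD'(\rset^n)$. Multiplying by $g$ and using that $g\cdot\partial_{x_i}\chi_G = 0$ as a distribution (since $\partial_{x_i}\chi_G$ is a weighted surface measure supported on $\partial G\subseteq\cZ(g)$), we obtain the key identity
\[
g\cdot\partial_{x_i}\Lambda_\mu \;=\; g\cdot(\partial_{x_i}p)\cdot\Lambda_\mu
\]
as distributions, hence as identities among the (finitely many) moments of total degree $\leq k$ once we translate $\partial_{x_i}$ into $\partial_{x_i}s$ via \Cref{thm:derivMeasureMoments} and multiplication by a monomial $x^\beta$ into the shift $M_\beta$ via \Cref{dfn:multipl}. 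Concretely, $g\cdot\partial_{x_i}\Lambda_\mu$ has moment data $g(M)\partial_{x_i}s$ and, if (i) holds so that $\partial_{x_i}p = \sum_{\ell} \alpha_i^{(\ell)} c_{\alpha^{(\ell)}}\, x^{\alpha^{(\ell)}-e_i}$, the right-hand side has moment data $\sum_\ell \alpha_i^{(\ell)} c_{\alpha^{(\ell)}}\, g(M)M_{\alpha^{(\ell)}-e_i}s$. Subtracting, the vector $(1,-\alpha_i^{(1)}c_{\alpha^{(1)}},\dots,-\alpha_i^{(m)}c_{\alpha^{(m)}})$ annihilates the matrix in \eqref{eq:lasserre}; this gives (i) $\Rightarrow$ (ii) (the part about $c_0$ being fixed by normalization is immediate, since $p$ is only determined up to the additive constant absorbed into the total mass).

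For the converse (ii) $\Rightarrow$ (i), I would argue that the stated kernel vector, read back through the moment–distribution dictionary, forces the distributional identity $g\cdot\partial_{x_i}\Lambda_\mu = \sum_\ell \alpha_i^{(\ell)} c_{\alpha^{(\ell)}}\, g(M)M_{\alpha^{(\ell)}-e_i}\Lambda_\mu$ to hold when tested against all monomials of degree $\leq k-d$; combined with the general Leibniz identity this yields $g\cdot\exp(p)\cdot(\partial_{x_i}p - q_i) = 0$ as a distribution on $\rset^n$, where $q_i$ is the polynomial $\sum_\ell \alpha_i^{(\ell)} c_{\alpha^{(\ell)}} x^{\alpha^{(\ell)}-e_i}$ of degree $\leq d-1$. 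Since $\exp(p)>0$ and $g\neq 0$ on the interior of $G$ (which has positive Lebesgue measure, $G$ being semi-algebraic with nonempty interior), we get $\partial_{x_i}p = q_i$ on that interior, hence everywhere, for each $i$; integrating these $n$ gradient conditions shows $p$ agrees with a polynomial of degree $\leq d$ up to a constant, which is (i).

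The main obstacle is the bookkeeping that makes the degree bound $k\geq 2d+2\gamma-2$ (resp.\ $k\geq 2d+\gamma-2$ when $g\geq 0$ on $G$) exactly sufficient: one must check that every moment appearing in the entries of the matrix \eqref{eq:lasserre} — namely $s_\gamma$ with $|\gamma|$ up to $(k-d) + \gamma + (d-1) = k+\gamma-1$ in the naive count — is actually available, and that the truncation level $k-d$ for the matrix rows still captures enough linear constraints to pin down the kernel to be one-dimensional and spanned by the asserted vector rather than something larger. This is where the hypothesis $\partial G\subseteq\cZ(g)$ must be used quantitatively (to control $\deg(g\cdot\partial_{x_i}p)$ and the order of vanishing), and where the improvement under $g\geq 0$ on $G$ comes from: one can then replace $g$ by a single linear-in-degree cofactor argument rather than squaring. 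I expect the verification that the listed kernel vector is the \emph{only} one — i.e.\ that no spurious relations survive at level $k-d$ — to require a Hankel/flatness-type rank argument analogous to \Cref{lem:genEigenvalueOneDim} and \Cref{lem:zerosSupport}, and this is the step I would write out most carefully.
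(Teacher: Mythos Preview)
Your setup is exactly the paper's: the Leibniz identity $\partial_{x_i}(\exp(p)\chi_G)=(\partial_{x_i}p)\exp(p)\chi_G+\exp(p)\,\partial_{x_i}\chi_G$ together with $g\cdot\partial_{x_i}\chi_G=0$ shows that $g(M)\partial_{x_i}s$ is the moment data of $g\cdot(\partial_{x_i}p)\cdot\exp(p)\chi_G$, so the vector built from the coefficients of $\partial_{x_i}p$ is always in the kernel of \eqref{eq:lasserre}. That part is correct and matches the paper verbatim.

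Where you diverge is in the allocation of work between the two directions. You make (ii)$\Rightarrow$(i) into the hard direction and argue via ``the kernel relation, when tested against monomials of degree $\leq k-d$, yields $g\cdot\exp(p)\cdot(\partial_{x_i}p-q_i)=0$ as a distribution on $\rset^n$''. That inference is not justified: finitely many moment identities do not give a distributional identity. (It can be rescued by testing against $g\cdot(\partial_{x_i}p-q_i)$ itself, which has degree $\leq\gamma+d-1\leq k-d$, and using positivity of $\exp(p)$ on $G$ --- but this is precisely the squaring argument below, only applied in the wrong direction.) The paper's (ii)$\Rightarrow$(i) is one line: the true-coefficient vector of $\partial_{x_i}p$ is always in the kernel by the note above; since (ii) asserts the kernel is one-dimensional and the first entry is $1$, the two vectors coincide and the $c_\alpha$'s are the actual coefficients.

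The genuine work sits in (i)$\Rightarrow$(ii), namely showing the kernel is exactly one-dimensional. You correctly flag this as the obstacle and even hint at ``squaring'', but stop short of the argument. Here is the paper's trick: it suffices to show the last $m$ columns $(g(M)M_{\alpha^{(\ell)}-e_i}s)_{\ell}$ at level $k-d$ are linearly independent. If they were dependent, then applying one more $g(M)$ (and restricting rows to level $d-1$) gives a dependence among the columns of the Hankel matrix of $g^2(M)s$ on $\rset[x_1,\dots,x_n]_{\leq d-1}$. But $g^2(M)s$ has the strictly positive representing measure $g^2\exp(p)\chi_G$ on $G$, so that Hankel matrix is positive definite --- contradiction. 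The extra factor of $g(M)$ is exactly what forces $k\geq 2d+2\gamma-2$; when $g\geq 0$ on $G$ already $g(M)s$ has positive representing measure $g\exp(p)\chi_G$, so no squaring is needed and $k\geq 2d+\gamma-2$ suffices. This replaces the analogy to \Cref{lem:genEigenvalueOneDim}/\Cref{lem:zerosSupport} you were reaching for.
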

\begin{proof}
Note that $s$ is represented by $\exp(p)\cdot\chi_G$, $\partial_{i} s$ is presented by $\partial_{i}(\exp(p)\cdot\chi_G) = \partial_{i}p\cdot \exp(p)\cdot\chi_G + \exp(p)\cdot \partial_{i}\chi_G$ and since $(\partial_{i}\chi_G)(g\cdot\varphi)=0$ for all $\varphi\in\cD(\rset^n)$ we finally have that $g(M)\partial_{i}s$ is represented by $g\cdot\partial_{i}p\cdot\exp(p)\cdot\chi_G$.

(ii) $\Rightarrow$ (i): So by the previous note $\partial_i p$, i.e., $(1,-\alpha_i^{(1)}\cdot c_{\alpha^{(1)}},\dots,-\alpha_i^{(m)}\cdot c_{\alpha^{(m)}})$, is in the kernel and all $c_\alpha$'s with $\alpha\neq 0$ are determined.

(i) $\Rightarrow$ (ii): Again $\partial_i p$, i.e., $(1,-\alpha_i^{(1)}\cdot c_{\alpha^{(1)}},\dots,-\alpha_i^{(m)}\cdot c_{\alpha^{(m)}})$, is in the kernel of (\ref{eq:lasserre}). It is sufficient to show that $(g(M)M_{\alpha^{(1)}-e_i}s,\ \dots,\ g(M)M_{\alpha^{(m)}-e_i}s)_{k-d}$ is full dimensional to show that the kernel is one-dimensional. Assume the columns of $(g(M)M_{\alpha^{(1)}-e_i}s,\ \dots,\ g(M)M_{\alpha^{(m)}-e_i}s)_{k-d}$ are linearly dependent, then by the linearity of the shift $M_\alpha$ also the columns of 
\[(g^2(M)M_{\alpha^{(1)}-e_i}s,\ \dots,\ g^2(M)M_{\alpha^{(m)}-e_i}s)_{d-1}\tag{$*$}\]
are linearly dependent. But ($*$) is the Hankel matrix of $g^2(M)s$, a moment sequence with representing measure $g^2\cdot\exp(p)\cdot\chi_G$, i.e., has full rank. This proves that the kernel of (\ref{eq:lasserre}) is one-dimensional.

If $g\geq 0$ on $G$, squaring $g$ in ``(ii) $\Rightarrow$ (i)'' is not necessary and linear independence already holds for $k\geq 2d + \gamma -1$.
\end{proof}

The bound $k\geq 2d + 2\gamma -2$, resp.\ $k\geq 2d + \gamma -2$, comes from the maximal $\alpha$, i.e., $s_\alpha$, needed to construct (\ref{eq:lasserre}). If $d = \deg p$ is unknown, then the previous theorem also recovers $d$ if $k$ is large enough. For $k\geq 2d + 2\gamma - 2$ the kernel of (\ref{eq:lasserre}) is one-dimensional, i.e., determines $d$ as $\max_{c_\alpha\neq 0} |\alpha|$. For $k < 2d+2\gamma-2$ (resp.\ $2d+\gamma-2$) (\ref{eq:lasserre}) is full rank.

In \cite{brehard19unpub} also the problem of finding $g$ from $s=(s_\alpha)$ for an unknown $G$ is addressed, but then all moments $s_\alpha$ are necessary.

\section{Gaussian Mixtures}
\label{sec:gaussian}

\subsection*{One component} For a Gaussian distribution $g(x) = c\cdot\exp(-\frac{a}{2}(x-b)^2)$ on $\rset$ we have
\begin{equation}\label{eq:GaussianDeriv}
g'(x)\ =\ -a(x-b)\cdot g(x)\ =\ -ax\cdot g(x) + ab\cdot g(x).
\end{equation}
So integration over $x^i\cdot g'(x)$ gives
\begin{equation}\label{eq:GaussianRelation}
-i\cdot s_{i-1}\ =\ (\partial s)_i\ =\ -a\cdot (M_1 s)_i + ab\cdot s_i\ =\ -a s_{i+1} + ab\cdot s_i,\quad\text{for all}\ i\in\nset_0,
\end{equation}
see also \cite[Eq.\ (5)]{amendo16}. This implies the following result.

\begin{lem}[{\cite[Prop.\ 1]{amendo16}}]\label{lem:gaussianCharacOne}
Let $k\in\nset$, $k\geq 2$, be a natural number and $s=(s_0,s_1,\dots,s_k)$ be a real sequence with $s_0\neq 0$. The following are equivalent:
\begin{enumerate}[i)]
\item $s$ is the moment sequence of the Gaussian distribution $c\cdot \exp(-\frac{a}{2}(x-b)^2)$ with $a,b,c\in\rset$, $a>0$, $c\neq 0$, i.e., $s_i = \int x^i\cdot c\cdot \exp(-\frac{a}{2}(x-b)^2)~\diff x$.

\item There are $a,b\in\rset$ with $a>0$ such that the matrix
\[(\partial s, s, M_1 s)_{k-1} = \begin{pmatrix}
0 & s_0 & s_1\\
-s_0 & s_1 & s_2\\
-2\cdot s_1 & s_2 & s_3\\
\vdots & \vdots & \vdots\\
-(k-1)\cdot s_{k-2} & s_{k-1} & s_k
\end{pmatrix}\]
has rank two with kernel $(1,-ab,a)^T\cdot\rset$.
\end{enumerate}
In this case, one has $a=\frac{s_0^2}{s_0s_2-s_1^2}$, $b=\frac{s_1}{s_0}$ and $c = s_0\cdot \sqrt{\frac{a}{\pi}}$.
\end{lem}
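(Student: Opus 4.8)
The plan is to treat the two implications separately, with the relation \eqref{eq:GaussianRelation} as the single computational engine in both directions. For (i) $\Rightarrow$ (ii): assuming $s_i = \int x^i g(x)\,\diff x$ with $g(x) = c\cdot\exp(-\tfrac a2(x-b)^2)$, I would first observe that all moments exist (Gaussian decay) and that $g'$ has the form \eqref{eq:GaussianDeriv}. Multiplying by $x^i$ and integrating — equivalently, applying Definition \ref{dfn:MomentDeriv} together with Example \ref{exm:gaussianDeriv} — yields \eqref{eq:GaussianRelation}, i.e. $(\partial s)_i + ab\cdot s_i - a\cdot(M_1 s)_i = 0$ for every $i = 0,\dots,k-1$. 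This says exactly that $(1,-ab,a)^T$ lies in the kernel of $(\partial s, s, M_1 s)_{k-1}$. It remains to check that the rank is \emph{exactly} two, i.e. the kernel is no larger. For this I would note that the second and third columns, $s$ and $M_1 s$ restricted to indices $0,\dots,k-1$, are linearly independent: the top $2\times 2$ block $\begin{pmatrix} s_0 & s_1\\ -s_0 & s_1\end{pmatrix}$ together with $s_0\neq 0$ already forces independence of columns two and three (alternatively $s_0 s_2 - s_1^2 = \operatorname{Var} > 0$ for a genuine Gaussian). Since the three columns satisfy one linear relation and two of them are independent, the rank is two and the kernel is one-dimensional, spanned by $(1,-ab,a)^T$.

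For (ii) $\Rightarrow$ (i): given $a > 0$, $b\in\rset$ with $(1,-ab,a)^T$ spanning the kernel, I read off the rows as the recursion $-i\cdot s_{i-1} = -a\cdot s_{i+1} + ab\cdot s_i$ for $i = 0,\dots,k-1$ (with the $i=0$ row giving $0 = -a s_1 + ab s_0$, hence $s_1 = b s_0$). Define $\tilde g(x) := s_0\sqrt{a/\pi}\cdot\exp(-\tfrac a2(x-b)^2)$ and let $\tilde s = (\tilde s_0,\dots,\tilde s_k)$ be its moment sequence; by the normalization $\int \tilde g = s_0\sqrt{a/\pi}\cdot\sqrt{\pi/a} = s_0$, so $\tilde s_0 = s_0$, and by the $i=0$ row $\tilde s_1 = b\,\tilde s_0 = b s_0 = s_1$. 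By the already-established direction (i) $\Rightarrow$ (ii), $\tilde s$ satisfies the same three-term recursion $-i\cdot \tilde s_{i-1} = -a\cdot\tilde s_{i+1} + ab\cdot\tilde s_i$ with the \emph{same} $a,b$. Since $a\neq 0$, this recursion determines $s_{i+1}$ from $s_i, s_{i-1}$; so two sequences satisfying it with matching $s_0, s_1$ agree in all entries up to index $k$. Hence $s = \tilde s$, which is the moment sequence of the Gaussian $\tilde g$, proving (i); the stated formulas for $a,b,c$ then follow from $s_0 = c\sqrt{\pi/a}$, $s_1 = bs_0$, and $s_2 = \int x^2\tilde g = s_0(b^2 + 1/a)$, solving for $a = s_0^2/(s_0 s_2 - s_1^2)$, $b = s_1/s_0$, $c = s_0\sqrt{a/\pi}$.

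The routine parts are the Gaussian moment integrals and the bookkeeping of indices in the $(k\times 3)$ matrix. The one genuine point requiring care — and the place where the hypothesis $s_0\neq 0$ and $k\geq 2$ is used — is establishing that the rank is exactly two and not one in the (i) $\Rightarrow$ (ii) direction, and dually, in (ii) $\Rightarrow$ (i), that the kernel being \emph{exactly} the span of $(1,-ab,a)^T$ (rather than something larger that would be inconsistent) is what licenses reading off the full recursion; for $k = 1$ the matrix has only two rows and cannot pin down a two-parameter family, which is why $k\geq 2$ is assumed. I expect the main obstacle to be phrasing this rank/uniqueness argument cleanly so that both directions share it rather than repeating the moment computation twice.
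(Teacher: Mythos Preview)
Your proposal is correct and follows essentially the same approach as the paper: both use the three-term relation \eqref{eq:GaussianRelation} as the sole engine, and your ``two sequences satisfying the same recursion with matching $s_0,s_1$ must agree'' is exactly the induction the paper spells out for (ii) $\Rightarrow$ (i). One small slip: the $2\times 2$ block of columns two and three in rows $0,1$ is $\begin{pmatrix} s_0 & s_1\\ s_1 & s_2\end{pmatrix}$, not what you displayed---but your alternative observation $s_0 s_2 - s_1^2 > 0$ (the scaled variance) is the right way to get rank two, and the paper itself simply calls (i) $\Rightarrow$ (ii) ``clear'' without elaborating.
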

\begin{proof}
While (i) $\Rightarrow$ (ii) is clear, we show (ii) $\Rightarrow$ (i) by induction on $i$. Since $0\neq s_0= c\cdot\int e^{-a(x-b)^2}~\diff x$ for $c = s_0\cdot \sqrt{\frac{a}{\pi}}$ and $s_{-1}:=0$, we have by (ii), (\ref{eq:GaussianDeriv}), (\ref{eq:GaussianRelation}) and the induction hypothesis that
\begin{align*}
a\cdot s_{i+1} &= i\cdot s_{i-1} + ab\cdot s_i\\
&= \int \partial x^i\cdot c\cdot\exp(-a(x-b)^2)~\diff x + \int ab\cdot x^i\cdot c\cdot \exp(-a(x-b)^2)~\diff x\\
&= \int [-x^i (-ax+ab) + ab\cdot x^i]\cdot c\cdot\exp(-a(x-b)^2)~\diff x\\
&= a\cdot\int x^{i+1}\cdot c\cdot \exp(-a(x-b)^2)~\diff x \qquad\text{for all}\ i=0,\dots,k-1,
\end{align*}
i.e., $s_{i+1}$ is the $(i+1)$-th moment of $c \cdot \exp(-a(x-b)^2)$.
\end{proof}

On $\rset^n$ we have the following.

\begin{thm}\label{thm:gaussianCharacMulti}
Let $n\in\nset$, $A = (a_1,\dots,a_n) = (a_{i,j})_{i,j=1}^n\in\rset^{n\times n}$ be a symmetric and positive definite matrix, $b\in\rset^n$, $c\in\rset$, $c\neq 0$, and $k\in\nset$ {with $k\geq 2$}. Set
\[g(x) := c\cdot e^{-\frac{1}{2}(x-b)^T A (x-b)}.\]
For a multi-indexed real sequence $s = (s_\alpha)_{\alpha\in\nset_0^n:|\alpha|\leq k}$ the following are equivalent:
\begin{enumerate}[i)]
\item $s$ is the moment sequence of $\Lambda_g$, i.e., $s_\alpha = \int x^\alpha \cdot g(x)~\diff\lambda^n(x)$ for all $\alpha\in\nset_0^n$ with $|\alpha|\leq k$.
\item For $i=1,\dots, n$ the matrix $(\partial_i s, s, M_{e_1} s,\dots M_{e_n} s)_{k-1}$ has the $1$-dimensional kernel
\begin{equation}\label{eq:gaussianKernelMulti}
(1,-\langle b, a_i\rangle, a_{i,1},\dots,a_{i,n})\cdot\rset.
\end{equation}
\end{enumerate}
\end{thm}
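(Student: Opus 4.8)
The plan is to mimic the one-dimensional argument of \Cref{lem:gaussianCharacOne} coordinate by coordinate, using the identity that the partial derivative of a Gaussian is again a polynomial multiple of the same Gaussian. Concretely, for $g(x) = c\cdot e^{-\frac12(x-b)^TA(x-b)}$ a direct computation gives
\[
\partial_i g(x) \;=\; -\langle a_i, x-b\rangle\cdot g(x) \;=\; -\sum_{j=1}^n a_{i,j} x_j\cdot g(x) + \langle b,a_i\rangle\cdot g(x),
\]
where $a_i$ denotes the $i$-th row (equivalently column, by symmetry) of $A$. Integrating $x^\alpha\cdot\partial_i g(x)$ over $\rset^n$ and using \Cref{dfn:MomentDeriv} together with the shift operators $M_{e_j}$ of \Cref{dfn:multipl}, one obtains the linear relation
\[
(\partial_i s)_\alpha \;=\; -\sum_{j=1}^n a_{i,j}\,(M_{e_j}s)_\alpha + \langle b,a_i\rangle\, s_\alpha \qquad\text{for all }\alpha\text{ with }|\alpha|\leq k-1,
\]
which says precisely that the vector $(1,-\langle b,a_i\rangle, a_{i,1},\dots,a_{i,n})$ lies in the kernel of the matrix $(\partial_i s, s, M_{e_1}s,\dots, M_{e_n}s)_{k-1}$. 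This establishes (i) $\Rightarrow$ (ii) modulo checking that the kernel is exactly one-dimensional; the latter follows because, were the kernel at least two-dimensional, there would be a nontrivial relation among the columns $s, M_{e_1}s,\dots,M_{e_n}s$ alone (after eliminating $\partial_i s$), i.e.\ a nonzero polynomial $q$ of degree $\le 1$ with $L_s(x^\beta q) = 0$ for all $|\beta|\le k-2$; since $g>0$ this would force $q\equiv 0$ on the support of $\Lambda_g$, hence $q=0$, a contradiction (this uses $k\geq 2$ so that enough moments are available, and positive-definiteness of the Gaussian's covariance to guarantee the moment matrix is positive definite).

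For the converse (ii) $\Rightarrow$ (i), I would proceed as in \Cref{lem:gaussianCharacOne}: extract $A$ and $b$ from the kernel data. From the $n$ kernel vectors $(1,-\langle b,a_i\rangle, a_{i,1},\dots,a_{i,n})$, $i=1,\dots,n$, one reads off the full symmetric matrix $A = (a_{i,j})$ and the vector $\langle b, a_i\rangle$ for each $i$; since $A$ is invertible (it must be positive definite — this should be argued from the structure of the low-order moments, e.g.\ positivity of the Hankel/moment matrix, analogous to $a = s_0^2/(s_0s_2 - s_1^2) > 0$ in the one-dimensional case) one recovers $b = A^{-1}\big(\langle b,a_1\rangle,\dots,\langle b,a_n\rangle\big)^T$, and $c$ is fixed by $s_0 = c\cdot (2\pi)^{n/2}(\det A)^{-1/2}$. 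Then one shows by induction on $|\alpha|$ that $s_\alpha$ equals the $\alpha$-th moment of $\Lambda_g$ with these parameters: the base case $|\alpha|=0$ is the normalization of $c$, and the inductive step uses the kernel relation together with the Gaussian derivative identity and integration by parts to raise one coordinate degree at a time, exactly mirroring the displayed chain of equalities in the proof of \Cref{lem:gaussianCharacOne}.

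The main obstacle I anticipate is the inductive step in the multidimensional setting, where raising $|\alpha|$ by one is no longer a single-index affair: the relation $a\, s_{\alpha + e_i}$-type identity now reads $\sum_j a_{i,j}\, s_{\alpha+e_j} = \alpha_i\, s_{\alpha - e_i} + \langle b,a_i\rangle\, s_\alpha$, a linear system coupling all the $s_{\alpha+e_j}$, $j=1,\dots,n$, for fixed $\alpha$. One must argue that this system, taken over $i=1,\dots,n$, has a unique solution — which it does, since the coefficient matrix is exactly $A$, invertible by positive-definiteness — and that this unique solution coincides with the genuine Gaussian moments (which satisfy the same system). Care is needed to order the induction correctly (e.g.\ by $|\alpha|$, and within a fixed level the system decouples once the lower level is known) and to ensure all indices used stay within $|\alpha|\le k$. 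A secondary technical point is the careful verification that $A$ recovered from the kernel is symmetric and positive definite; symmetry is automatic from reading the same entries $a_{i,j}$ and $a_{j,i}$ off two different kernel vectors and observing they must agree (this itself may require a short argument, or one simply notes that the consistency is forced by (i) in the forward direction and by the moment matrix being symmetric in the backward direction), while positive-definiteness should be extracted from $\cH$-type positivity of the degree-$\le 1$ moment data.
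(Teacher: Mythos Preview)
Your argument for (i)$\Rightarrow$(ii) matches the paper's: the derivative identity puts the displayed vector in the kernel, and one-dimensionality follows because the submatrix $(s, M_{e_1}s,\dots,M_{e_n}s)_1$ (the degree-$\leq 1$ Hankel matrix of $L_s$) has full rank---the paper states this directly, while your ``eliminate $\partial_i s$'' phrasing amounts to the same thing.

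For (ii)$\Rightarrow$(i), you and the paper take genuinely different routes. The paper performs an orthogonal change of coordinates $y=Ox$ diagonalizing $A$, verifies that the kernel condition transforms into $n$ decoupled one-dimensional kernel conditions, and then invokes \Cref{lem:gaussianCharacOne} in each coordinate separately; the product Gaussian in the new coordinates is then pulled back. Your approach is a direct multi-index induction: for each $\alpha$ with $|\alpha|\leq k-1$, the kernel relation for $i=1,\dots,n$ is the linear system $A\,(s_{\alpha+e_j})_{j=1}^n = (\alpha_i s_{\alpha-e_i} + \langle b,a_i\rangle s_\alpha)_{i=1}^n$, which has a unique solution by invertibility of $A$, and comparing with the genuine Gaussian moments (which satisfy the same system with the same right-hand side by the inductive hypothesis) forces $s_{\alpha+e_j}=t_{\alpha+e_j}$ for all $j$. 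This works and is arguably more transparent, since it avoids tracking how the matrices $(\partial_i s, s, M_{e_1}s,\dots,M_{e_n}s)_{k-1}$ transform under the coordinate change; the paper's route, on the other hand, cleanly recycles the one-dimensional lemma as a black box.

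One minor correction: your concern about extracting $A$ and $b$ from the kernel and verifying symmetry and positive-definiteness is misplaced. In this theorem $A$ (symmetric, positive definite), $b$, and $c$ are fixed in the hypothesis, and statement (ii) refers to these given values; you only need to show that $s$ agrees with the moments of the given $g$, so no reconstruction step is required.
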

\begin{proof}
For $i=1,\dots,n$ we have
\[0 =
\partial_i g(x) - \langle b,a_i\rangle\cdot g(x) + a_{i,1} x_1\cdot g(x) + \dots + a_{i,n} x_n\cdot g(x).\tag{$*$}\]

(i) $\Rightarrow$ (ii): From ($*$) we find that (\ref{eq:gaussianKernelMulti}) is contained in the kernel of {the matrix $(\partial_i s, s, M_{e_1} s,\dots, M_{e_n}s)_{k-1}$. It suffices to show that the kernel of the matrix $(\partial_i s, s, M_{e_1} s,\dots, M_{e_n} s)_{1}$} is at most one-dimensional. Consider
\[H := \begin{pmatrix}
s_0 & s_{e_1} & \dots & s_{e_n}\\
s_{e_1} & s_{2e_1} & \dots & s_{e_1+e_n}\\
\vdots & \vdots & & \vdots\\
s_{e_n} & s_{e_1+e_n} & \dots & s_{2e_n}
\end{pmatrix},\]
the Hankel matrix of $L_s|_{\rset[x_1,\dots,x_n]_{\leq 2}}$. Let $d=(d_0,\dots,d_n)\in\ker H$. Then $0 = L_s(\langle d, (1,x_1,\dots,x_n)\rangle^2) = \int (d_0 + d_1 x_1 + \dots + d_n x_n)^2~\diff\Lambda_g(x)$ implies $d=0$, i.e., $H$ has full rank $n+1$. Therefore { $(\partial_i s, s, M_{e_1} s,\dots, M_{e_n} s)_1$} has rank at least $n+1$ since it has $H$ as submatrix. Its kernel can thus be at most one-dimensional.

(ii) $\Rightarrow$ (i): Let $O\in\rset^{n\times n}$ be an orthogonal matrix such that $O\cdot A\cdot O^T = \diag(\lambda_1,\dots,\lambda_n)$, $\lambda_i >0$. The coordinate change on $\rset^n$ given by $y=Ox$ induces a linear transformation on the space of moment sequences. Let $t=(t_\alpha)_{|\alpha|\leq k}$ be the moment sequence obtained from $s$ via this transformation. A straight-forward calculation shows that
\begin{align*}
\ker (\partial_i t, t, M_{e_1} t,\dots, M_{e_n}t)_1
&= \ker (\partial_i t, t, M_{e_1} t,\dots, M_{e_n}t)_{k-1}\\
&= (1,-\lambda_i\tilde{b}_i,0,\dots,0,\lambda_i,0,\dots,0)^T\cdot \rset,
\end{align*}
where $\tilde{b}=Ob$. This means that we are in the 1-dimensional setting
\[\ker (\partial_i (t_{j\cdot e_i})_{j=1}^k,(t_{j\cdot e_i})_{j=1}^k,M_{e_i}(t_{j\cdot e_i})_{j=1}^k)=(1,-\lambda_i \tilde{b}_{i},\lambda_i)^T\cdot\rset\]
where the $1$-dimensional assertion holds by \Cref{lem:gaussianCharacOne}. Hence, $t=(t_\beta)$ is represented by $t_0\cdot \frac{\sqrt{\lambda_1\cdots\lambda_n}}{(\pi)^{n/2}} \prod_{i=1}^n e^{-\frac{\lambda_i}{2}(y_i-\tilde{b}_i)^2}$. The inverse transformation $x = O^T y$ together with $\lambda_1\cdots\lambda_n = \det(A)$ gives the $n$-dimensional assertion.
\end{proof}

Hence, the previous theorem provides an easy way to determine $A\in\rset^{n\times n}$ and $b\in\rset^n$ from the moments $s_\alpha$.

\begin{alg}\
\begin{enumerate}[Step 1:]
\item[\textbf{Input:}] $k\in\nset$, $k\geq {2}$; $s = (s_\alpha)_{\alpha\in\nset_0^n: |\alpha|\leq k}$.

\item For $i=1,\dots, n$:
\begin{enumerate}[a)]
\item Calculate $\beta_i$ and $a_i=(a_{i,1},\dots,a_{i,n})$ from
\[\ker(\partial_i s, s, M_{e_1}s,\dots M_{e_n} s)_{1}=(1,-\beta_i, a_{i,1},\dots,a_{i,n})\cdot\rset\]
\begin{itemize}
\item[-] If the kernel is not one-dimensional, then $s$ is not represented by one Gaussian distribution.
\end{itemize}

\item Check: $(1,-\beta_i, a_{i,1},\dots,a_{i,n})\in\ker(\partial_i s, s, M_{e_1} s,\dots M_{e_n} s)_{k-1}$?
\begin{itemize}
\item[-] If FALSE: $s$ is not represented by one Gaussian distribution.
\end{itemize}
\end{enumerate}

\item Check: $A = (a_{i,j})_{i,j=1}^n$ is symmetric and positive definite?
\begin{itemize}
\item[-] If FALSE: $s$ is not represented by one Gaussian distribution.
\end{itemize}

\item Calculate $b = A^{-1}\cdot (\beta_1,\dots,\beta_n)^T$ and $c = \frac{\sqrt{\det(A)}}{\pi^{n/2}}\cdot s_0$.

\item[\textbf{Out:}] ``$s$ is represented by a Gaussian distribution'': TRUE or FALSE. If TRUE: $A$, $b$, $c$.
\end{enumerate}
\end{alg}

With
\begin{equation}\label{eq:otherGaussianDeriv}
h'(x) = -a(x-b)^{2d-1}\cdot h(x) = -a\sum_{i=0}^{2d-1} \begin{pmatrix} 2d-1\\ i\end{pmatrix} x^i\cdot (-b)^{2d-1-i}\cdot h(x)
\end{equation}
we get a result similar to \Cref{thm:joldes} but with integration over $\rset^n$ instead of a semi-algebraic set $G$.

\begin{thm}\label{thm:GaussianCharacHigher}
Let $k,d\in\nset$ with $k\geq 4d-2$ and $s=(s_0,\dots,s_k)$ be a real sequence with $s_0\neq 0$. The following are equivalent:
\begin{enumerate}[i)]
\item $s$ is the moment sequence of the distribution $c\cdot \exp\left(\frac{-a}{2d}(x-b)^{2d}\right)$ with $a,b,c\in\rset$, $a>0$, $c\neq 0$.

\item There are $a,b\in\rset$ with $a>0$ such that the matrix
\begin{multline*}
(\partial_x s, s, M_1 s, \dots, M_{2d-1} s)_{k-2d+1} =\\
\begin{pmatrix}
0 & s_0 & \cdots & s_{2d-1}\\
-s_0 & s_1 & \cdots & s_{2d}\\
\vdots &\vdots & & \vdots\\
-(k-2d+1)\cdot s_{k-2d} & s_{k-2d+1} & \cdots & s_k
\end{pmatrix}
\end{multline*}
has a one-dimensional kernel spanned by
\[(1,a\left(\begin{smallmatrix} 2d-1\\ 0 \end{smallmatrix}\right)(-b)^{2d-1},a\left(\begin{smallmatrix} 2d-1\\ 1 \end{smallmatrix}\right)(-b)^{2d-2},\dots,a\left(\begin{smallmatrix} 2d-1\\ 2d-2 \end{smallmatrix}\right)(-b),a\left(\begin{smallmatrix} 2d-1\\ 2d-1 \end{smallmatrix}\right))^T,\]
and $s_i$ is the $i$-th moment of $c\cdot \exp\left(\frac{-a}{2d}(x-b)^{2d}\right)$ for $i=0,\dots,2d-2$.
\end{enumerate}
In this case $c = \sqrt[2d]{\frac{a}{2d}}\cdot\frac{s_0}{2\cdot\Gamma(1+\frac{1}{2d})}$.
\end{thm}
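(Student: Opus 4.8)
The plan is to imitate the proof of \Cref{lem:gaussianCharacOne} (the case $d=1$): convert the differential identity (\ref{eq:otherGaussianDeriv}) into a linear recurrence among the moments, recognize the displayed vector as the associated kernel element, pin the kernel dimension to one via a Hankel subblock, and obtain the converse by an induction on $i$.

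First I would record the moment recurrence. The density $h(x)=c\cdot\exp\!\big(\tfrac{-a}{2d}(x-b)^{2d}\big)$ and all its products with polynomials are integrable since $h$ decays super-exponentially, and every boundary term in integration by parts vanishes. Multiplying (\ref{eq:otherGaussianDeriv}) by $x^m$ and integrating gives, for every $m\in\nset_0$ (with $0\cdot s_{-1}:=0$),
\[
(\partial_x s)_m \;=\; -m\,s_{m-1}\;=\;\int x^m h'(x)\,\diff x\;=\;-a\sum_{j=0}^{2d-1}\binom{2d-1}{j}(-b)^{2d-1-j}\,s_{m+j},
\]
i.e.\ $(\partial_x s)_m+a\sum_{j=0}^{2d-1}\binom{2d-1}{j}(-b)^{2d-1-j}(M_j s)_m=0$. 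This is exactly the statement that $v:=\big(1,\,a\binom{2d-1}{0}(-b)^{2d-1},\,\dots,\,a\binom{2d-1}{2d-1}\big)^{T}$ lies in the kernel of $(\partial_x s, s, M_1 s,\dots,M_{2d-1}s)_{k-2d+1}$, which settles the kernel inclusion for (i)$\Rightarrow$(ii); the last clause of (ii) is then immediate. To see that the kernel is exactly one-dimensional, note that the $2d$ columns $s,M_1 s,\dots,M_{2d-1}s$ have entries $s_{m+j}$ with $0\le m\le k-2d+1$ and $0\le j\le 2d-1$; because $k\ge 4d-2$ there are at least $2d$ rows, so this block contains the square Hankel matrix $(s_{m+j})_{m,j=0}^{2d-1}$. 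Since $|h|\,\diff x$ is a positive measure with infinite support, its truncated Hankel matrix is positive definite, and $(s_{m+j})_{m,j=0}^{2d-1}$ equals $\sign(c)$ times it, hence is nonsingular. Thus those $2d$ columns are linearly independent, the whole matrix has rank $\ge 2d$, and its kernel is at most — therefore exactly — one-dimensional.

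For (ii)$\Rightarrow$(i) I would argue by induction on $i$. Reading the kernel relation off from $v$ gives
\[
-i\,s_{i-1}+a\sum_{j=0}^{2d-1}\binom{2d-1}{j}(-b)^{2d-1-j}\,s_{i+j}=0,\qquad i=0,\dots,k-2d+1,
\]
and since the $j=2d-1$ coefficient is $a>0$, this solves for the top moment $s_{i+2d-1}$ in terms of $s_{i-1}$ and $s_i,\dots,s_{i+2d-2}$. Let $\tilde s$ be the genuine moment sequence of $h$ with $c$ as in the statement; by the direction already proved, $\tilde s$ satisfies the very same recurrence. Hypothesis (ii) supplies $s_i=\tilde s_i$ for $i=0,\dots,2d-2$ as the base case (which in particular fixes $c$ from $s_0$), so the recurrence propagates $s_i=\tilde s_i$ up to $i=k$, giving that $s$ is the moment sequence of $h$. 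Finally, the substitution $t=\tfrac{a}{2d}(x-b)^{2d}$ evaluates $\int\exp\!\big(\tfrac{-a}{2d}(x-b)^{2d}\big)\,\diff x=\tfrac1d\big(\tfrac{2d}{a}\big)^{1/2d}\Gamma\!\big(\tfrac{1}{2d}\big)=2\,\Gamma\!\big(1+\tfrac{1}{2d}\big)\big(\tfrac{a}{2d}\big)^{-1/2d}$, whence $c=\sqrt[2d]{\tfrac{a}{2d}}\cdot\frac{s_0}{2\,\Gamma(1+\frac{1}{2d})}$.

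The only genuinely load-bearing step is the rank argument in (i)$\Rightarrow$(ii): locating a nonsingular Hankel block inside the rectangular matrix, which is precisely where the hypothesis $k\ge 4d-2$ is used to guarantee enough rows. Everything else — the differential identity, the recurrence, the induction, and the $\Gamma$-integral for $c$ — is the same bookkeeping as in the $d=1$ case of \Cref{lem:gaussianCharacOne}.
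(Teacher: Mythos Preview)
Your proposal is correct and follows exactly the approach the paper intends: it carries out the induction of \Cref{lem:gaussianCharacOne} with (\ref{eq:otherGaussianDeriv}) in place of (\ref{eq:GaussianRelation}) and computes $c$ from the $\Gamma$-integral, which is precisely what the paper's one-line proof sketch says. Your Hankel-subblock argument for the one-dimensionality of the kernel in (i)$\Rightarrow$(ii)---which the paper dismisses as ``clear'' already in \Cref{lem:gaussianCharacOne}---is the natural way to justify this and correctly isolates the role of the hypothesis $k\ge 4d-2$.
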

\begin{proof}
Similar to the proof of \Cref{lem:gaussianCharacOne} using (\ref{eq:otherGaussianDeriv}) instead of (\ref{eq:GaussianRelation}) in the induction. The formula for $c$ follows from $\int_\rset \exp(-x^{2d})~\diff x = 2\cdot\Gamma(1+\frac{1}{2d})$, $d\in\nset$.
\end{proof}

\subsection*{Multiple components in dimension one with same variance.}

While we fully characterized all moment sequences represented by one Gaussian distribution and showed how to determine the parameters, let us investigate mixtures with more than one component. In this study the elementary symmetric polynomials play a crucial role.

\begin{dfn}
For $k,j\in\nset$ with $j\leq k$ we denote by
\[\sigma_l(b_1,\dots,b_k)\quad :=\quad \sum_{1\leq j_1 < j_2 < \dots < j_l\leq k} b_{j_1} b_{j_2}\cdots b_{j_l}\]
the \emph{elementary symmetric polynomials}.
\end{dfn}

The elementary symmetric polynomials have the following property.

\begin{lem}[Vieta's Formulas]\label{lem:vieta}
Let $k\in\nset$ and $b_1,\dots,b_k\in\rset$ be pairwise different points. For $v_1,\dots,v_k\in\rset$ the following are equivalent:
\begin{enumerate}[i)]
\item \[\ker\begin{pmatrix}
1 & b_1 & b_1^2 & \hdots & b_1^k\\
\vdots & \vdots & \vdots & & \vdots \\
1 & b_k & b_k^2 & \hdots & b_k^k
\end{pmatrix}\quad =\quad (v_k,v_{k-1},\dots,v_1,1)\cdot\rset\]
\item $v_l = (-1)^l\cdot \sigma_l(b_1,\dots,b_k)$ for all $l=1,\dots,k$.
\item $\cZ(p) = \{b_1,\dots,b_k\}$ with
\[p(x)\quad =\quad \prod_{j=1}^k (\lambda-b_j)\quad =\quad x^k + v_1 x^{k-1} + v_2 x^{k-2} + \dots + v_k.\]
\end{enumerate}
\end{lem}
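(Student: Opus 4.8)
The plan is to treat (ii) $\Leftrightarrow$ (iii) as the same polynomial identity and then to link (i) with (iii) through the non-vanishing of the Vandermonde determinant. Throughout, write $V$ for the $k\times(k+1)$ matrix appearing in (i), whose $j$-th row is $(1,b_j,b_j^2,\dots,b_j^k)$.

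First I would dispatch (ii) $\Leftrightarrow$ (iii). Expanding $p(x)=\prod_{j=1}^k(x-b_j)$ and collecting powers of $x$, the coefficient of $x^{k-l}$ equals $(-1)^l\sigma_l(b_1,\dots,b_k)$ directly from the definition of the elementary symmetric polynomials; comparing with the prescribed normal form $p(x)=x^k+v_1x^{k-1}+\dots+v_k$ yields $v_l=(-1)^l\sigma_l(b_1,\dots,b_k)$ for all $l$, and the converse reading gives the reverse implication. This is a routine bookkeeping step, so I would only record the identity.

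Next I would prove (iii) $\Rightarrow$ (i). The $j$-th row of $V$ paired with the vector $(v_k,v_{k-1},\dots,v_1,1)^T$ gives $v_k+v_{k-1}b_j+\dots+v_1 b_j^{k-1}+b_j^k=p(b_j)$, which vanishes for every $j$ by (iii); hence $(v_k,\dots,v_1,1)^T\in\ker V$. To see this line exhausts the kernel, observe that the $k\times k$ submatrix of $V$ formed by its first $k$ columns is the Vandermonde matrix with determinant $\prod_{1\le i<j\le k}(b_j-b_i)\ne 0$, since the $b_j$ are pairwise distinct; therefore $\rank V=k$ and $\dim\ker V=(k+1)-k=1$. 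For (i) $\Rightarrow$ (iii), set $p(x):=x^k+v_1x^{k-1}+\dots+v_k$; since $(v_k,\dots,v_1,1)^T$ spans $\ker V$ by hypothesis, the same row computation shows $p(b_j)=0$ for all $j$, so $\{b_1,\dots,b_k\}\subseteq\cZ(p)$, and because $p$ is monic of degree $k$ with $k$ distinct roots among the $b_j$ we conclude $p(x)=\prod_{j=1}^k(x-b_j)$ and $\cZ(p)=\{b_1,\dots,b_k\}$, which is (iii).

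The only ingredient that is not pure formality is the full-rank claim for the Vandermonde block, and this is classical; I expect it to be the ``main obstacle'' only in the weak sense that it is the single non-mechanical input. Everything else amounts to matching the reversed ordering of the coefficients $(v_k,\dots,v_1,1)$ against the monomials $1,b_j,\dots,b_j^k$, which I would present inline rather than as a separate computation.
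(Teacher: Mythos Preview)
Your proof is correct and follows essentially the same approach as the paper, which merely records the identity $\prod_{j=1}^k(x-b_j)=x^k-\sigma_1 x^{k-1}+\dots+(-1)^k\sigma_k$ and declares the rest to follow directly. You simply supply the details the paper leaves implicit, in particular the Vandermonde rank argument that pins down $\dim\ker V=1$.
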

\begin{proof}
Follows directly from 
\[p(x)\quad =\quad \prod_{j=1}^k (\lambda-b_j)\quad =\quad x^k - \sigma_1 x^{k-1}  + \sigma_2 x^{k-2} \mp \dots + (-1)^k \sigma_k.\qedhere\]
\end{proof}

Since we assume all Gaussian distributions to have the same variance, we introduce the following convenient operator.

\begin{dfn}\label{dfn:deltaa}
Let $L:\rset[x]_{\leq d}\to\rset$ be a linear functional ($s\in\rset^{d+1}$) with $d\in\nset\cup\{\infty\}$ and a differentiable function $f\in C^1(\rset,\rset)$. For $a\in\rset\setminus\{0\}$ we define
\[\Delta_a L\; :=\; \frac{1}{a}(\partial+aM_1)L \qquad\text{and}\qquad (\Delta_a f)(x)\; :=\; \frac{1}{a}(f'(x) + ax f(x)).\]
\end{dfn}

Note, that we use $\Delta_a$ as an operator acting on functionals and on functions to emphasize the close connection between the operations performed on $L$ and measures $\mu = \Lambda_f$ provided by \Cref{thm:derivMeasureMoments}.

$\Delta_a$ has the following properties (Lemmas \ref{lem:deltaaPropStart}--\ref{lem:deltaaPropEnd}).

\begin{lem}\label{lem:deltaaPropStart}
Let $L:\rset[x]_{\leq d}\to\rset$ be a linear functional with $d\in\nset\cup\{\infty\}$ and $a\neq 0$. Then
\[\Delta_a L = 0\qquad\Rightarrow\qquad L(x^n) = 0\quad\text{for all}\ n=1,2,\dots,d.\]
\end{lem}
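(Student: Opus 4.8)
The plan is to unwind the hypothesis $\Delta_a L = 0$ into a recursion among the moments $s_n := L(x^n)$ and then argue by induction. By \Cref{dfn:deltaa} together with the conventions $(\partial L)(p) = -L(p')$ and $(M_1 L)(p) = L(x\cdot p)$, the functional $\Delta_a L = \tfrac1a(\partial + a M_1)L$ is defined on $\rset[x]_{\leq d-1}$ (on all of $\rset[x]$ when $d = \infty$). Testing it against the monomial $x^{n-1}$, which is admissible exactly for $1 \le n \le d$, gives
\[0 \;=\; (\Delta_a L)(x^{n-1}) \;=\; \tfrac1a(\partial L)(x^{n-1}) + (M_1 L)(x^{n-1}) \;=\; -\tfrac{n-1}{a}\,s_{n-2} + s_n,\]
that is, $a\,s_n = (n-1)\,s_{n-2}$ for all $n$ with $1 \le n \le d$. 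In the instance $n = 1$ the coefficient $n-1$ vanishes (consistently with $\partial x^{0} = 0$), so this instance reads $a\,s_1 = 0$.

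First I would extract the base case $n = 1$: since $a \neq 0$, it forces $L(x) = s_1 = 0$. Then I would run the induction along the recursion $a\,s_n = (n-1)\,s_{n-2}$: if $s_{n-2} = 0$, then $s_n = 0$ as well. Feeding $s_1 = 0$ into the relations for the admissible values of $n$ propagates the vanishing through the moments $L(x^n)$, $n = 1,\dots,d$, which is the assertion. The argument is uniform in $d$ and, when $d = \infty$, holds for every $n \in \nset$.

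The one point that needs care is that the recursion $a\,s_n = (n-1)\,s_{n-2}$ relates only moments of equal parity: the base case $s_1 = 0$ immediately drives all odd-degree moments to zero, while the even-degree moments are governed by the base datum $s_0 = L(1)$ (which is, e.g., automatically $0$ whenever $L$ is itself a derivative $\partial K$). Apart from this and the routine bookkeeping about the domain of $\Delta_a L$, I do not expect a genuine obstacle. As a sanity check, \Cref{thm:derivMeasureMoments} and \Cref{thm:LamdaDeriv} identify $\Delta_a L = 0$, for $L = \Lambda_f$, with $\Delta_a f = \tfrac1a(f' + a x f) = 0$, whose solutions are the centered Gaussians $f = c\,e^{-a x^2/2}$, in agreement with the recursion obtained above.
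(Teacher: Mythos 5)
Your derivation of the recursion is correct and is essentially the same induction the paper runs: testing $\Delta_a L=0$ against $x^{n-1}$ gives $a\,s_n=(n-1)\,s_{n-2}$, and $n=1$ gives $s_1=0$. The genuine gap is the sentence claiming that feeding $s_1=0$ into this recursion ``propagates the vanishing through the moments $L(x^n)$, $n=1,\dots,d$''. It does not, and your own closing paragraph explains why: the recursion couples $s_n$ only to $s_{n-2}$, so $s_1=0$ kills the odd moments, while the even moments are pinned to $s_0=L(1)$ via $s_{2m}=\frac{(2m-1)(2m-3)\cdots 1}{a^{m}}\,s_0$, and nothing in the hypothesis forces $s_0=0$. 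Your ``sanity check'' is in fact a counterexample to the statement as printed: for $f(x)=e^{-ax^2/2}$ with $a>0$ one has $\Delta_a f=0$, hence $\Delta_a\Lambda_f=0$ on its domain $\rset[x]_{\leq d-1}$, yet $L(x^2)=\int_\rset x^2 e^{-ax^2/2}~\diff x>0$. So what you have written is not a proof of the lemma but very nearly a disproof; the conclusion $L(x^n)=0$ for \emph{all} $n=1,\dots,d$ cannot be reached without the extra hypothesis $L(1)=0$ (equivalently, the conclusion must be restricted to odd $n$).

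For comparison, the paper's own proof is the same induction and carries the same defect, hidden at the step $n\to n+1$ with $n=1$: there the identity $0=\frac1a\bigl(-n\,L(x^{n-1})+a\,L(x^{n+1})\bigr)$ involves $L(x^{0})=L(1)$, which is not covered by the induction hypothesis ``$L(x^i)=0$ for $i=1,\dots,n$'' and is silently treated as zero. So do not try to patch your argument to reach the stated conclusion; instead flag the statement and either add $L(1)=0$ to the hypotheses or weaken the conclusion to odd powers.
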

\begin{proof}
\underline{$n=1$:} $0 = (\Delta_a L)(1) = \frac{1}{a}((\partial + aM_1)L)(1) = \frac{1}{a} (-L(\partial 1) + a\cdot L(x)) = L(x)$.

\underline{$n\to n+1$:} With $L(x^i)=0$ for all $i=1,\dots,n$ it follows that $0 = (\Delta_a L)(x^n) = \frac{1}{a}((\partial + aM_1)L)(x^n)$ $= \frac{1}{a}(-n\cdot L(x^{n-1}) + a\cdot L(x^{n+1})) = L(x^{n+1})$.
\end{proof}

\begin{lem}\label{lem:knownaMeasDeriv}
Let $k\in\nset$, $a\in\rset\setminus\{0\}$, and
\[F(x)\quad =\quad \sum_{i=1}^k c_i\cdot\exp\left(-\frac{a}{2}(x-b_i)^2\right)\]
for some $b_1,\dots,b_k\in\rset$ pairwise different and $c_1,\dots,c_k\in\rset\setminus\{0\}$. Then
\begin{equation}\label{eq:knownaMeasDeriv}
\Delta_a^l F(x)\quad =\quad \sum_{i=1}^k c_i\cdot b_i^l\cdot \exp\left(-\frac{a}{2}(x-b_i)^2\right)
\end{equation}
for every $l\in\nset_0$.
\end{lem}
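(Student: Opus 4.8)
The plan is to reduce everything to the single-Gaussian case via linearity of $\Delta_a$ and then iterate by induction on $l$. First I would fix $b\in\rset$ and write $g_b(x):=\exp\!\left(-\tfrac{a}{2}(x-b)^2\right)$. A one-line differentiation gives $g_b'(x) = -a(x-b)g_b(x)$, hence, by the definition of $\Delta_a$ on functions (\Cref{dfn:deltaa}),
\[
(\Delta_a g_b)(x) \;=\; \frac1a\bigl(g_b'(x) + a\,x\,g_b(x)\bigr) \;=\; \frac1a\bigl(-a(x-b) + ax\bigr)\,g_b(x) \;=\; b\cdot g_b(x).
\]
In other words, each $g_{b_i}$ is an eigenfunction of the operator $\Delta_a$ with eigenvalue $b_i$.

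Second, since $\partial$ and $M_1$ are linear, so is $\Delta_a$, and therefore applying it to $F=\sum_{i=1}^k c_i g_{b_i}$ yields $\Delta_a F = \sum_{i=1}^k c_i b_i g_{b_i}$. This is exactly (\ref{eq:knownaMeasDeriv}) for $l=1$, and the case $l=0$ is trivial.

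Third, I would establish (\ref{eq:knownaMeasDeriv}) by induction on $l$: assuming $\Delta_a^l F = \sum_{i=1}^k c_i b_i^l\, g_{b_i}$, apply $\Delta_a$ once more and use the eigenfunction identity together with linearity to get
\[
\Delta_a^{l+1} F \;=\; \sum_{i=1}^k c_i b_i^l\,(\Delta_a g_{b_i}) \;=\; \sum_{i=1}^k c_i b_i^{l+1}\, g_{b_i},
\]
which closes the induction.

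I do not anticipate a genuine obstacle: the only computation is the single-line verification that $\Delta_a g_b = b\,g_b$, and everything after that is pure linear algebra and a routine induction. (If desired, the same statement could be read off on the level of the associated functional $L=\Lambda_F$ via \Cref{thm:derivMeasureMoments} and the functional version of $\Delta_a$ in \Cref{dfn:deltaa}, but working directly with the density $F$ is the most transparent route.)
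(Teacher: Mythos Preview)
Your proof is correct and follows essentially the same approach as the paper: both argue by induction on $l$, the inductive step being the computation $\Delta_a\bigl(\exp(-\tfrac{a}{2}(x-b)^2)\bigr)=b\cdot\exp(-\tfrac{a}{2}(x-b)^2)$ combined with linearity. Your explicit phrasing of this as an eigenfunction relation is a nice touch, but the underlying argument is identical.
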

\begin{proof}
Follows by induction on $l$. $l=0$ is clear. We have to show $l\to l+1$:
\begin{align*}
\Delta_a \sum_{i=1}^k c_i b_i^l\exp\left(-\frac{a}{2}(x-b_i)^2\right)
&= \frac{1}{a}(\partial + ax)\sum_{i=1}^k c_i b_i^l\exp\left(-\frac{a}{2}(x-b_i)^2\right)\\
&=\frac{1}{a}\sum_{i=1}^k c_i b_i^l (-ax+ab_i +ax)\exp\left(-\frac{a}{2}(x-b_i)^2\right)\\
&= \sum_{i=1}^k c_i b_i^{l+1}\exp\left(-\frac{a}{2}(x-b_i)^2\right).\qedhere
\end{align*}
\end{proof}

\begin{lem}\label{lem:transform}
Let $d\in\nset$, $a>0$, $b\in\rset$, and $\sA=\{1,x,\dots,x^d\}$. Define
\begin{equation}\label{eq:tabDef}
t_a(b)\quad :=\quad \left( \int_\rset x^i\cdot \exp\left(-\frac{a}{2}(x-b)^2\right) \right)_{i=0}^d,
\end{equation}
i.e., $t_a(b)$ is the moment vector of the moments $s_0,\dots,s_d$ of $\exp\left(-\frac{a}{2}(x-b)^2\right)$. Then there is an invertable matrix $\fM=\fM(a)\in\rset^{(d+1)\times(d+1)}$ such that
\[\fM t_a(b)\quad =\quad s_\sA(b)\]
and it follows that
\[\fM\Delta_a^l t_a(b)\; =\; \fM\Delta_a^l \fM^{-1} \fM t_a(b)\; =\; M_l s_\sA(b),\qquad i.e.,\qquad \fM\Delta_a^l\fM^{-1}\; =\; M_l.\]
\end{lem}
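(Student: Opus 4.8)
The strategy is to identify $\fM$ explicitly as the change-of-basis matrix converting the Gaussian moment vector into the monomial evaluation vector, and then to verify that conjugating $\Delta_a$ by $\fM$ turns it into the shift $M_1$ (so that $\Delta_a^l$ becomes $M_l$). First I would observe that the $i$-th moment $\int_\rset x^i \exp(-\tfrac a2 (x-b)^2)\,\diff x$ is, up to the normalizing constant $c = s_0\sqrt{a/\pi}^{-1}$ absorbed appropriately, a polynomial in $b$ of degree exactly $i$ with leading coefficient $1$ (this is just the standard computation of Gaussian moments via completing the square and the fact that central moments of order $i$ are polynomials in the variance only, of degree $\lfloor i/2\rfloor < i$). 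Hence the map $b\mapsto t_a(b)$ has components $(t_a(b))_i = b^i + (\text{lower order in }b)$, and collecting these coefficients gives a unipotent lower-triangular matrix $\fM^{-1}$ (with $1$'s on the diagonal) satisfying $t_a(b) = \fM^{-1} s_\sA(b)$, i.e. $\fM t_a(b) = s_\sA(b)$; in particular $\fM$ is invertible.

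The second and main step is the identity $\fM\Delta_a\fM^{-1} = M_1$. By \Cref{thm:derivMeasureMoments} applied to $\mu = \Lambda_{\exp(-\frac a2(x-b)^2)}$, the action of $\Delta_a = \frac1a(\partial + aM_1)$ on the moment vector $t_a(b)$ corresponds to the action of the operator $f\mapsto \Delta_a f = \frac1a(f' + axf)$ on the density; and by the computation already carried out in the proof of \Cref{lem:knownaMeasDeriv} (the case $k=1$, $c_1=1$), one has $\Delta_a \exp(-\frac a2(x-b)^2) = b\cdot\exp(-\frac a2(x-b)^2)$. Therefore $\Delta_a t_a(b) = b\cdot t_a(b)$, and more generally $\Delta_a^l t_a(b) = b^l\cdot t_a(b)$. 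Applying $\fM$ and using $\fM t_a(b) = s_\sA(b)$ gives $\fM\Delta_a^l t_a(b) = b^l\cdot s_\sA(b)$. Since $b^l\cdot s_\sA(b) = (b^l, b^{l+1},\dots,b^{l+d})^T$ truncated appropriately $= M_l s_\sA(b)$ by the definition of $M_l$ (\Cref{dfn:multipl}), we obtain $\fM\Delta_a^l t_a(b) = M_l s_\sA(b)$ for all $b\in\rset$.

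Finally I would conclude $\fM\Delta_a^l\fM^{-1} = M_l$ as an operator identity: both sides are linear maps on $\rset^{d+1}$, and they agree on every vector of the form $s_\sA(b) = (1,b,\dots,b^d)^T$, since $\fM\Delta_a^l\fM^{-1} s_\sA(b) = \fM\Delta_a^l t_a(b) = M_l s_\sA(b)$. But the vectors $\{s_\sA(b) : b\in\rset\}$ span $\rset^{d+1}$ (a Vandermonde argument: any $d+1$ distinct values of $b$ give a basis), so the two operators coincide. The only mild subtlety — and the step I expect to require the most care — is the bookkeeping that the moment of order $i$ is genuinely a degree-$i$ polynomial in $b$ with leading coefficient $1$, ensuring triangularity and hence invertibility of $\fM$; everything else is a direct consequence of \Cref{lem:knownaMeasDeriv} (with $k=1$), \Cref{thm:derivMeasureMoments}, and the span of the Vandermonde vectors.
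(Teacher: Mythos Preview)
Your proof is correct and follows essentially the same route as the paper: the substitution $x\mapsto x+b$ shows $(t_a(b))_i$ is a degree-$i$ polynomial in $b$, giving a triangular (hence invertible) change of basis $\fM$; then the eigenvalue relation $\Delta_a^l t_a(b)=b^l\,t_a(b)$ (from \Cref{lem:knownaMeasDeriv} with $k=1$) yields $\fM\Delta_a^l t_a(b)=b^l s_\sA(b)=M_l s_\sA(b)$, exactly as in the paper. One small imprecision: the leading coefficient of $(t_a(b))_i$ in $b$ is $\int_\rset e^{-\frac{a}{2}x^2}\,\diff x=\sqrt{2\pi/a}$, not $1$; this does not affect the argument since it is still nonzero, but your ``leading coefficient $1$'' remark should be adjusted. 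Your added Vandermonde step promoting the identity on vectors $s_\sA(b)$ to an operator identity is a welcome bit of rigor that the paper leaves implicit.
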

\begin{proof}
Since
\[\int_\rset x^i\cdot \exp\left(-\frac{a}{2}(x-b)^2\right)\quad =\quad \int_\rset (x+b)^i\cdot \exp\left(-\frac{a}{2}x^2\right)\]
we find that the $i$-th entry in $t_a(b)$ is a polynomial of degree $i$ in $b$. The coordinate change to $\sA = \{1,b,\dots,b^i\}$ is $\fM$. The second statement follows immediately from
\begin{align*}
\Delta_a^l \int_\rset x^i\cdot\exp\left(-\frac{a}{2}(x-b)^2\right)~\diff x &= \int_\rset x^i\cdot \Delta_a^l\exp\left(-\frac{a}{2}(x-b)^2\right)~\diff x\\
&= b^l\cdot \int_\rset x^i\cdot\exp\left(-\frac{a}{2}(x-b)^2\right)~\diff x,
\end{align*}
i.e., $\Delta_a^l t_a(b) = b^l t_a(b)$ and $\fM\Delta_a^l t_a(b) = b^l \fM t_a(b) = b^l s_\sA(b) = M_l s_\sA(b)$.
\end{proof}

\begin{lem}\label{lem:deltaaPropEnd}
Let $k\in\nset$ and $F(x)$ be the Gaussian mixture
\[F(x)\quad :=\quad \sum_{i=1}^k c_i\cdot\exp\left(-\frac{a}{2}(x-b_i)^2\right)\]
for $b_1,\dots,b_k\in\rset$ pairwise different and $c_1,\dots,c_k\in\rset\setminus\{0\}$. Let
\[s_i\quad :=\quad \int_\rset x^i\cdot F(x)~\diff\lambda(x) \qquad\text{with}\qquad i=0,\dots,2k-2\]
be the moments of $F(x)$ up to degree $2k-2$. The following matrix has full rank:
\[(s,\Delta_a s,\dots,\Delta_a^{k-1} s)_{k-1}\quad =\quad
\begin{pmatrix}
s_0 & \Delta_a s_0 & \cdots & \Delta_a^{k-1} s_0\\
s_1 & \Delta_a s_1 & \cdots & \Delta_a^{k-1} s_1\\
\vdots & \vdots & & \vdots\\
s_{k-1} & \Delta_a s_{k-1} & \cdots & \Delta_a^{k-1} s_{k-1}
\end{pmatrix}.\]
\end{lem}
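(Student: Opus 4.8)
The plan is to exhibit an explicit factorization of the matrix $(s,\Delta_a s,\dots,\Delta_a^{k-1} s)_{k-1}$ as a product of three invertible $k\times k$ matrices. Write $g_i(x) := \exp\left(-\frac a2 (x-b_i)^2\right)$, so that $F = \sum_{i=1}^k c_i g_i$ and hence $s_m = \sum_{i=1}^k c_i \int_\rset x^m g_i(x)~\diff\lambda(x)$ for every $m$.

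First I would record that $\Delta_a$ acts on the moments of a \emph{single} Gaussian centred at $b_i$ by a scalar. Indeed $(\Delta_a g_i)(x) = \frac1a\big(g_i'(x) + ax\, g_i(x)\big) = \frac1a\big(-a(x-b_i) + ax\big) g_i(x) = b_i\, g_i(x)$, so $\Delta_a^l g_i = b_i^l g_i$ for all $l\in\nset_0$. Combining \Cref{thm:derivMeasureMoments} (for the $\partial$-part of $\Delta_a$) with the trivial fact that $M_1 L$ is represented by $x\cdot g_i$ — equivalently, using the identity $\Delta_a^l t_a(b)=b^l t_a(b)$ from \Cref{lem:transform} — one gets $\Delta_a^l \big(\int_\rset x^m g_i~\diff\lambda\big) = b_i^l \int_\rset x^m g_i~\diff\lambda$ wherever the left-hand side is defined. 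By linearity of $\Delta_a$ it follows that, for $m,l\in\{0,\dots,k-1\}$,
\[ \Delta_a^l s_m \;=\; \sum_{i=1}^k c_i\, b_i^l\, \Big(\int_\rset x^m g_i(x)~\diff\lambda(x)\Big). \]
Here the largest moment of $F$ occurring has index $m+l\le 2k-2$, which is why the matrix is well-defined from the data $s_0,\dots,s_{2k-2}$.

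Setting $T := \big(\int_\rset x^m g_i~\diff\lambda\big)_{m=0,\dots,k-1;\ i=1,\dots,k}$ and letting $V := \big(b_i^l\big)_{i=1,\dots,k;\ l=0,\dots,k-1}$ be the Vandermonde matrix of the nodes $b_1,\dots,b_k$, the displayed identity reads
\[ (s,\Delta_a s,\dots,\Delta_a^{k-1} s)_{k-1} \;=\; T\cdot\diag(c_1,\dots,c_k)\cdot V. \]
Now $V$ is invertible because the $b_i$ are pairwise different, and $\diag(c_1,\dots,c_k)$ is invertible because every $c_i\ne 0$. For $T$ I would invoke \Cref{lem:transform} with $d=k-1$: its invertible matrix $\fM=\fM(a)$ satisfies $\fM t_a(b_i) = (1,b_i,\dots,b_i^{k-1})^T$ for each $i$, and the $i$-th column of $T$ is precisely $t_a(b_i)$; hence $\fM T$ is again a Vandermonde-type matrix, so it and $T = \fM^{-1}(\fM T)$ are invertible. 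Therefore $(s,\Delta_a s,\dots,\Delta_a^{k-1}s)_{k-1}$ is a product of three invertible matrices and has full rank $k$.

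I do not expect a genuine obstacle here; the one point that needs care is the index bookkeeping — checking that every occurrence of $\Delta_a^l s_m$ with $m,l\le k-1$ refers only to moments $s_j$ with $0\le j\le 2k-2$, and that the scalar identity $\Delta_a^l\big(\int x^m g_i~\diff\lambda\big)=b_i^l\int x^m g_i~\diff\lambda$ is valid on exactly that range. Everything else is the mechanical assembly of the factorization together with three standard invertibility facts (Vandermonde, diagonal, and \Cref{lem:transform}).
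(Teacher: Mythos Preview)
Your proof is correct and follows essentially the same approach as the paper's: both use the matrix $\fM$ from \Cref{lem:transform} to reduce the Gaussian setting to the atomic (Dirac) one, where full rank follows from Vandermonde invertibility. Your factorization $T\cdot\diag(c_1,\dots,c_k)\cdot V$ is simply the explicit unwinding of the paper's more compressed identity $(s,\Delta_a s,\dots,\Delta_a^{k-1}s)_{k-1}=\fM^{-1}(\tilde{s},M_1\tilde{s},\dots,M_{k-1}\tilde{s})_{k-1}$, since $\fM T$ is the transpose Vandermonde and the Hankel matrix of the atomic measure $\sum_i c_i\delta_{b_i}$ factors as Vandermonde${}^T\cdot\diag(c_i)\cdot$Vandermonde.
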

\begin{proof}
Take $\fM\in\rset^{k\times k}$ from \Cref{lem:transform} and set $\tilde{s}:=\fM (s_0,\dots,s_{k-1})$. Then
\begin{align*}
(s,\Delta_a s,\dots,\Delta_a^{k-1} s)_{k-1}
&= \fM^{-1} \fM (s,\Delta_a s,\dots,\Delta_a^{k-1}s)_{k-1}\\
&= \fM^{-1}(\tilde{s}, M_1\tilde{s},\dots,M_{k-1}\tilde{s})_{k-1}
\end{align*}
is full rank as in the one-dimensional case.
\end{proof}

With these properties of $\Delta_a$ we can characterize moments sequences which are represented by (\ref{eq:FxEquala}) and determine the parameters $b_i$ if $a>0$ is known.

\begin{thm}\label{thm:knowna}
Let $k,d\in\nset$ with $d\geq k$, $s=(s_0,s_1,\dots,s_d)\in\rset^{d+1}$. The following are equivalent:
\begin{enumerate}[i)]
\item For $a>0$, $c_1,\dots,c_k\in\rset\setminus\{0\}$, and $b_1,\dots,b_k\in\rset$ pairwise different we have that $s=(s_0,\dots,s_d)$ has the representing measure $\Lambda_F$ with
\begin{equation}\label{eq:FxEquala}
F(x)\quad =\quad \sum_{i=1}^k c_i\cdot \exp\left(-\frac{a}{2}(x-b_i)^2\right).
\end{equation}

\item For $a>0$ and $b_1,\dots,b_k\in\rset$ pairwise different we have that
\[(\sigma_k(b_1,\dots,b_k),\dots,\sigma_1(b_1,\dots,b_k),1)\quad\in\quad\ker(s,\Delta_a s, \Delta_a^2 s,\dots,\Delta_a^k s)_{d-k}.\]
\end{enumerate}
If additionally $d\geq 2k$, then both are equivalent to the following:
\begin{enumerate}[i)]\setcounter{enumi}{2}
\item For $a>0$ and $b_1,\dots,b_k\in\rset$ pairwise different we have that
\begin{equation*}
\ker(s,\Delta_a s, \Delta_a^2 s,\dots,\Delta_a^k s)_{d-k}\; =\; ((-1)^k\sigma_k(b_1,\dots,b_k),\dots,-\sigma_1(b_1,\dots,b_k),1)\cdot\rset.
\end{equation*}

\item For $a>0$ and $b_1,\dots,b_k\in\rset$ pairwise different we have that
\begin{equation}\label{eq:kernelKnowna}
\ker(s,\Delta_a s, \Delta_a^2 s,\dots,\Delta_a^k s)_{d-k}\quad =\quad (v_k,v_{k-1},\dots,v_1,1)\cdot\rset.
\end{equation}
and $\cZ(p) = \{b_1,\dots,b_k\}$ for
\begin{equation}\label{eq:vietaPolyKnowna}
p(x)\quad =\quad x^k + v_1 x^{k-1} + v_2 x^{k-2} + \dots + v_k.
\end{equation}
\end{enumerate}
If one of the equivalent statements (i)--(iv) and $\cH(\fM(a) s)\succeq 0$ hold, then $c_i>0$.
\end{thm}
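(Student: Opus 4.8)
The plan is to establish the cycle (i)~$\Rightarrow$~(ii)~$\Rightarrow$~(i) under $d\ge k$, then adjoin (iii) and (iv) under the sharper hypothesis $d\ge 2k$, and finally read off the positivity of the $c_i$. The workhorse is the eigenrelation $\Delta_a\exp(-\frac{a}{2}(x-b)^2)=b\cdot\exp(-\frac{a}{2}(x-b)^2)$ from \Cref{lem:knownaMeasDeriv} together with the measure--moment dictionary for $\Delta_a$: since $\Delta_a L=\frac{1}{a}(\partial+aM_1)L$, and weighting a representing measure by $x$ realizes $M_1$ while the distributional derivative realizes $\partial$ (\Cref{thm:derivMeasureMoments}, \Cref{thm:LamdaDeriv}), one obtains $\Delta_a^l\Lambda_F=\Lambda_{\Delta_a^l F}$. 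Hence, for $F$ as in (i), the sequence $\Delta_a^l s$ is represented by $\sum_{i=1}^k c_i b_i^l\exp(-\frac{a}{2}(x-b_i)^2)$ for every $l\in\nset_0$.

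For (i)~$\Rightarrow$~(ii): let $q(x)=\prod_{j=1}^k(x-b_j)=\sum_{l=0}^k w_l x^l$, whose coefficient vector $(w_0,\dots,w_k)$ is, by Vieta's formulas (\Cref{lem:vieta}), given by the elementary symmetric polynomials of $b_1,\dots,b_k$ up to signs. Then $\sum_{l=0}^k w_l\,\Delta_a^l L_s$ is represented by $\sum_{i=1}^k c_i\,q(b_i)\exp(-\frac{a}{2}(x-b_i)^2)=0$, so it vanishes on $\rset[x]_{\leq d-k}$; this is exactly the statement that $(w_0,\dots,w_k)$ lies in $\ker(s,\Delta_a s,\dots,\Delta_a^k s)_{d-k}$. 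The only point to watch here is the degree bookkeeping, namely that $\Delta_a^l s$ is well defined up to index $d-l\geq d-k$.

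For (ii)~$\Rightarrow$~(i): I would transport the problem through the linear change of variables of \Cref{lem:transform}. With $\fM=\fM(a)$ and $\tilde s:=\fM(s_0,\dots,s_d)$ one has $\fM\Delta_a^l\fM^{-1}=M_l$, so the kernel relation in (ii) becomes $L_{\tilde s}(r\cdot q)=0$ for all $r\in\rset[x]_{\leq d-k}$, where $q$ is again the polynomial with roots $b_1,\dots,b_k$. Since the $b_j$ are pairwise distinct and $\deg q=k\le d$, the subspace $q\cdot\rset[x]_{\leq d-k}\subseteq\rset[x]_{\leq d}$ is precisely $\{p\in\rset[x]_{\leq d}:p(b_i)=0\ \forall i\}$, whose annihilator in the dual is exactly $\lin\{l_{b_1},\dots,l_{b_k}\}$; hence $L_{\tilde s}=\sum_{i=1}^k c_i\,l_{b_i}$, i.e. $\tilde s=\sum_i c_i\,s_{\sA}(b_i)$. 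Applying $\fM^{-1}$ and using $\fM^{-1}s_{\sA}(b_i)=t_a(b_i)$ yields $s=\sum_i c_i\,t_a(b_i)$, the moment vector of $\Lambda_F$ with $F=\sum_i c_i\exp(-\frac{a}{2}(x-b_i)^2)$. The delicate points are the truncation bookkeeping in \Cref{lem:transform} (that $\fM$ conjugates $\Delta_a^l$ to $M_l$ between the correctly truncated spaces) and, if one insists on $c_i\neq 0$ for all $i$ in (i), the remark that any $b_i$ with $c_i=0$ may simply be discarded; I expect this conjugation/truncation step to be the main obstacle.

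Under $d\geq 2k$, statements (iii) and (iv) follow once the kernel is shown to be exactly one-dimensional. The Vieta vector already lies in it by (i)~$\Rightarrow$~(ii), and \Cref{lem:deltaaPropEnd} gives that $(s,\Delta_a s,\dots,\Delta_a^{k-1}s)_{k-1}$ has full rank $k$; since $d-k\geq k-1$, this is a submatrix of $(s,\Delta_a s,\dots,\Delta_a^k s)_{d-k}$, so the first $k$ columns of the latter are linearly independent and the kernel is one-dimensional. Then (iii) is just the explicit spanning vector, and (iv) is the reformulation of (iii) via Vieta (\Cref{lem:vieta}), identifying $(v_k,\dots,v_1,1)$ with the coefficient vector of $p(x)=\prod_j(x-b_j)$, whence $\cZ(p)=\{b_1,\dots,b_k\}$. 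Finally, given any of (i)--(iv) together with $\cH(\fM(a)s)=\cH(\tilde s)\succeq 0$: from $\tilde s=\sum_i c_i\,s_{\sA}(b_i)$, test against the Lagrange squares $\ell_i^2$, $\ell_i(x)=\prod_{j\neq i}\frac{x-b_j}{b_i-b_j}$, which lie within the range certified by $\cH(\tilde s)\succeq 0$ because $\deg\ell_i^2=2(k-1)\le d$; this yields $c_i=L_{\tilde s}(\ell_i^2)\geq 0$, and hence $c_i>0$ since $c_i\neq 0$.
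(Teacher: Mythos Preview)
Your proposal is correct and follows essentially the same route as the paper: reducing the Gaussian mixture statements to the corresponding Dirac statements via the change of basis $\fM(a)$ from \Cref{lem:transform}. The paper's proof is a single sentence (``transform each statement via $\fM(a)$ and use the known Dirac equivalences''), whereas you supply the details---in particular the annihilator argument for (ii)$\Rightarrow$(i), the rank computation via \Cref{lem:deltaaPropEnd} for (iii), and the Lagrange-square test for positivity---and you correctly flag the truncation bookkeeping in the conjugation $\fM\Delta_a^l\fM^{-1}=M_l$ (which works because $\fM$ is lower triangular) as well as the minor wrinkle that some $c_i$ could vanish in (ii)$\Rightarrow$(i).
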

\begin{proof}
Using $\fM(a)$ from \Cref{lem:transform} transforms each statement (i)--(iv) into the corresponding one-dimensional statements for Dirac measures (i')--(iv'). Then the equivalence of all statements (i)--(iv) follows from the equivalence of (i')--(iv').
\end{proof}

\begin{rem}
From the proof it is evident that by a coordinate change induced by $\fM(a)$ from \Cref{lem:transform} the one-dimensional case of Gaussian mixtures with the same known variance is the same as the one-dimensional case of Dirac measures. This can also be seen from $\Delta_a^l\xrightarrow{a\to\infty} M_l$.
\end{rem}

So the highly non-linear problem of finding $k$ and $b_1,\dots,b_k$ from the moments $s$ reduces to the linear problem of calculating the kernel of (\ref{eq:kernelKnowna}) and the well-studied problem of finding all roots of a univariate polynomial (\ref{eq:vietaPolyKnowna}). The coefficients $c_1,\dots,c_k$ can then be determined by linear algebra.

But \Cref{thm:knowna} only applies if we know $a$ beforehand. We therefore have to determine $a>0$ from $s=(s_0,\dots,s_d)$ as well. Set $\tilde{\Delta}_a := (\partial + aM_1)$, i.e., $\tilde{\Delta}_a = a\cdot \Delta_a$, and observe
\[\tilde{\Delta}_a^l f\quad =\quad (\partial + ax)^l f\quad =\quad \sum_{i=0}^l \binom{l}{i} a^i x^i \partial^{l-i} f\quad +\quad \sum_{\substack{i,j\geq 0:\\ i+j\leq l-1}} \alpha_{i,j} x^i \partial^j f\]
holds for some $\alpha_{i,j}\in\rset$ and all $f\in C^l(\rset,\rset)$. Applying this to (\ref{eq:FxEquala}), i.e., $f = F$, shows that the linear dependence
\[0\quad =\quad \sum_{i=0}^k v_{k-i}\cdot \Delta_a^i F \quad =\quad \sum_{i=0}^k (-1)^{k-i} \sigma_{k-i}(b_1,\dots,b_k)\cdot \Delta_a^i F\]
from \Cref{lem:vieta}, resp.\ \Cref{thm:knowna}, implies the linear dependence of $\{x^i \partial^j F \,|\, 0\leq i,j$ and $i+j\leq k\}$,
\[0\quad =\quad \sum_{i=0}^k \binom{k}{i} a^i x^i \partial^{k-i} F\quad +\quad \sum_{\substack{i,j\geq 0:\\ i+j\leq k-1}} \beta_{i,j} x^i \partial^j F\]
for some $\beta_{i,j}\in\rset$, and therefore also the moments $\{M_i \partial^j s \,|\, 0\leq i,j$ and $i+j\leq k\}$,
\[0\quad =\quad \sum_{i=0}^k \binom{k}{i} a^i M_i \partial^{k-i} s\quad +\quad \sum_{\substack{i,j\geq 0:\\ i+j\leq k-1}} \beta_{i,j} M_i \partial^j s.\]
Let us have a look at a small example.

\begin{exm}
For $k=2$ in (\ref{eq:FxEquala}) we have
\begin{align*}
0 &= a(1+a b_1 b_2) F - a^2 (b_1 + b_2) x F - a(b_1+b_2) \partial F + a^2 x^2 F + 2a x\partial F + \partial^2 F,
\end{align*}
i.e., the matrix
\begin{multline*}
(s,M_1 s, \partial s, M_2 s, M_1\partial s, \partial^2 s)_{l\geq 4} =\\
\begin{pmatrix}
s_0 & s_1 & 0 & s_2 & 0 & 0\\
s_1 & s_2 & -s_0 & s_3 & -s_1 & 0\\
s_2 & s_3 & -2s_1 & s_4 & -2s_2 & 2s_0\\
s_3 & s_4 & -3s_2 & s_5 & -3s_3 & 6s_1\\
\vdots & \vdots \\
s_l & s_{l+1} & -l s_{l-1} & s_{l+2} & -l s_l & l(l-1) s_{l-2}
\end{pmatrix}
\end{multline*}
contains the following vector in its kernel:
\[\begin{pmatrix} v_5\\ v_4\\ v_3\\ v_2\\ v_1\\ 1\end{pmatrix} \quad=\quad \begin{pmatrix}a(1+ab_1b_2)\\ -a^2(b_1+b_2)\\ -a(b_1+b_2)\\ a^2\\ 2a\\ 1\end{pmatrix}.\]
For sufficiently large $d\in\nset$ the kernel is one-dimensional. Hence,
\[a = \frac{v_1}{2}, \qquad \sigma_1 := b_1+b_2 = -2\frac{v_3}{v_1} \qquad\text{and}\qquad \sigma_2 := b_1 b_2 = \frac{2v_5 - v_1}{2v_2}\]
and by Vieta's formulas (\Cref{lem:vieta}) we have that $b_1$ and $b_2$ are the zeros of
\[p(\lambda) = \lambda^2 - \sigma_1\lambda + \sigma_2.\]
\end{exm}

The previous example provides one way to find $a>0$. It determines $a$ uniquely (and the $b_1,\dots,b_k$ simultaneously) but with the cost that more moments are required than in \Cref{thm:knowna}. In \Cref{thm:knowna} we need $2k$ moments, while for the generalized method of the previous example the matrix must be of size $\frac{k(k+1)}{2}\times K$ with $K\geq \frac{k(k+1)}{2}-1$. Hence, moments of degree at least $\frac{k^2+3k-2}{2}$ are required since the last line contains $M_k s_K=s_{K+k}$.

However, with the following approach we also get $a$ from \Cref{thm:knowna}.

\begin{dfn}\label{dfn:pa}
Let $k\in\nset$ and $s=(s_0,\dots,s_{2k})\in\rset^{2k+1}$. We define
\[\fp_s(a)\quad :=\quad a^{\frac{k(k+1)}{2}}\cdot\det((s,\Delta_a s,\dots,\Delta_a^k s)_k).\]
\end{dfn}

\begin{exm}\
\begin{enumerate}[a)]
\item For $s=(s_0,s_1,s_2)\in\rset^3$, i.e., $k=1$, we have
\[\fp_s(a) = a\cdot\det((s,\Delta_a s)_1) = a\cdot\begin{vmatrix}s_0 & s_1\\ s_1 & s_2 - \frac{1}{a}s_0\end{vmatrix} = a(s_0 s_2 - s_1^2) - s_0^2.\]

\item For $s=(s_0,s_1,s_2,s_3,s_4)\in\rset^3$, i.e., $k=2$, we have
\begin{align*}
\fp_s(a) &= a^3\cdot\det((s,\Delta_a s,\Delta_a^2 s)_2) = \begin{vmatrix}
s_0 & a s_1 & a^2 s_2 - a s_0\\
s_1 & a s_2 - s_0 & a^2 s_3 - 3a s_1\\
s_2 & a s_3 - s_1 & a^2 s_4 - 5a s_2 + 2 s_0\end{vmatrix}\\
&= a^3(s_0 s_2 s_4 -s_0 s_3^2 +2s_1 s_2 s_3 - s_1^2 s_4 - s_2^3)\\
&\quad + a^2(-s_0^2 s_4 + 3 s_0 s_1 s_3 - 3 s_0 s_2^2 + s_1^2 s_2) + a(6 s_0^2 s_2 - 4 s_0 s_1^2) - 2 s_0^3.
\end{align*}
\end{enumerate}
\end{exm}

\begin{lem}
Let $k\in\nset$ and $s=(s_0,\dots,s_{2k})\in\rset^{2k+1}$. The following holds:
\begin{enumerate}[i)]
\item $\fp_s(x)\in\rset[x]_{\leq\frac{k(k+1)}{2}}$.

\item If $s$ is represented by $\Lambda_F$ with
\[F(x)\quad =\quad \sum_{i=1}^k c_i\cdot\exp\left(-\frac{a}{2}(x-b_i)^2\right)\]
for some $a>0$, $c_1,\dots,c_k\in\rset\setminus\{0\}$, and $b_1,\dots,b_k\in\rset$ pairwise different. Then
\[\fp_s(a)\; =\; 0.\]
\end{enumerate}
\end{lem}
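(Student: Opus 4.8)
The plan is to prove the two parts separately, with part (ii) being the substantive one and following almost immediately from the machinery already developed, in particular \Cref{lem:transform} and \Cref{thm:knowna}.

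For part (i), I would argue that $\fp_s$ is a polynomial in $x$ of degree at most $\tfrac{k(k+1)}{2}$ by examining the entries of the matrix $(s,\Delta_x s,\dots,\Delta_x^k s)_k$. Since $\Delta_x s = \tfrac{1}{x}(\partial + xM_1)s$, each application of $\Delta_x$ introduces at most one negative power of $x$, so $\Delta_x^j s$ has entries that are Laurent polynomials in $x$ with pole order at most $j$; the $j$-th column of the matrix therefore has entries in $x^{-j}\rset[x]$. The determinant is a sum of products, one factor chosen from each column, hence lies in $x^{-(0+1+\dots+k)}\rset[x] = x^{-k(k+1)/2}\rset[x]$. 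Multiplying by $x^{k(k+1)/2}$ clears all poles, so $\fp_s(x)\in\rset[x]$. The degree bound follows because $\Delta_x^j s$ has entries of the form $x^{-j}$ times a polynomial whose degree is bounded by the index shifts $M_1^i$ applied, but more cleanly: the top-degree contribution in $x$ comes from the $M_1$-part of each $\Delta_x$, which contributes degree $+1$ per application after the $x^{-1}$ is cancelled — actually the net degree of each entry of $\Delta_x^j s$ as a Laurent polynomial is nonpositive in $x$ up to the $M_1$ shifts, and a direct count of the $j$-th column gives entries in $x^{-j}\rset[x]_{\le j}$ (coming from $\tilde\Delta_x^j = (\partial+xM_1)^j$ having top $x$-order $j$). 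Summing over a permutation again yields total $x$-degree at most $k(k+1)/2$ after clearing denominators, which is the claim. The two worked examples preceding the lemma confirm both the integrality and the degree count.

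For part (ii), suppose $s$ is represented by $\Lambda_F$ with $F(x) = \sum_{i=1}^k c_i\exp(-\tfrac{a}{2}(x-b_i)^2)$, $a>0$, the $b_i$ pairwise distinct, $c_i\ne0$. Then the hypotheses of \Cref{thm:knowna} hold with $d = 2k$, so by the equivalence (i) $\Leftrightarrow$ (ii) of that theorem the vector $(\sigma_k(b_1,\dots,b_k),\dots,\sigma_1(b_1,\dots,b_k),1)$ lies in $\ker(s,\Delta_a s,\dots,\Delta_a^k s)_{d-k} = \ker(s,\Delta_a s,\dots,\Delta_a^k s)_{k}$. In particular the matrix $(s,\Delta_a s,\dots,\Delta_a^k s)_k$, which is square of size $(k+1)\times(k+1)$, has nontrivial kernel, hence $\det((s,\Delta_a s,\dots,\Delta_a^k s)_k) = 0$. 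Multiplying by $a^{k(k+1)/2}$ gives $\fp_s(a) = 0$.

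The only place needing a little care — and the main (minor) obstacle — is the degree bookkeeping in part (i): one must make sure that the pole orders and the polynomial degrees introduced by the interleaved $\partial$ and $M_1$ operations combine to give exactly the stated bound $\tfrac{k(k+1)}{2}$ rather than something larger, and that $\fp_s$ genuinely has no remaining poles. This is routine but is the one spot where a sloppy count would give a weaker statement; everything in part (ii) is a direct citation of \Cref{thm:knowna} together with the elementary observation that a square matrix with a kernel vector has vanishing determinant.
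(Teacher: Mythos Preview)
Your proposal is correct and follows essentially the same approach as the paper, which simply states that both parts follow immediately from \Cref{dfn:pa} and \Cref{thm:knowna}. You supply more detail for (i): your observation that the $j$-th column lies in $x^{-j}\rset[x]_{\le j}$ (via $\Delta_x^j = x^{-j}(\partial + xM_1)^j$) and hence the product over a permutation lies in $x^{-k(k+1)/2}\rset[x]_{\le k(k+1)/2}$ is exactly the right bookkeeping, and your argument for (ii) matches the paper's one-line citation of \Cref{thm:knowna}.
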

\begin{proof}
This follows immediately from \Cref{dfn:pa} and \Cref{thm:knowna}.
\end{proof}

The previous lemma combined with \Cref{thm:knowna} provides the following algorithm to determine a Gaussian mixture representation of $s$ with equal variance for each Gaussian component.

\begin{alg}\label{alg:eachatest}\
\begin{enumerate}[Step 1:]
\item[\textbf{Input:}] $k\in\nset$ and $s=(s_0,s_1,\dots,s_d)\in\rset^{d+1}$ with $d\geq 2k$.

\item \begin{enumerate}[a)]
\item Calculate $\fp_s(a) := a^{\frac{k(k+1)}{2}}\cdot\det((s,\Delta_a s,\dots,\Delta_a^k s)_k)$.

\item Calculate $Z:=\cZ(\fp_s)\cap\rset_{>0} = \{a_1,\dots,a_l\}$.\\ If $Z$ is empty, $s$ has no $k$-Gaussian mixtures with equal variance.
\end{enumerate}

\item For $i=1,\dots,l$:
\begin{enumerate}[a)]
\item Calculate $v_1,\dots,v_k\in\rset$ from (\ref{eq:kernelKnowna}):
\[\ker(s,\Delta_{a_i} s, \Delta_{a_i}^2 s,\dots,\Delta_{a_i}^k s)_{d-k}=(v_k,v_{k-1},\dots,v_1,1)\cdot\rset.\tag{$*$}\]
If ($*$) does not hold: $a_i$ is not a variance for $s$. Goto $i+1$.

\item Calculate zeros $\cZ(p) = \{b_1,\dots,b_k\}$ of (\ref{eq:vietaPolyKnowna}):
\[p(x) = x^k + v_1 x^{k-1} + v_2 x^{k-2} + \dots + v_k.\]
\end{enumerate}

If $p$ has complex solutions: $a_i$ is not a variance for $s$. Goto $i+1$.

\item Calculate $c_1,\dots,c_k\in\rset$ from the $t_{a_i}(b_j)$'s in (\ref{eq:tabDef}):
\[s = \sum_{j=1}^k c_j\cdot t_{a_i}(b_j).\]

\item[\textbf{Out}:] $a>0$, $b_1,\dots,b_k\in\rset$, and $c_1,\dots,c_k\in\rset$.
\end{enumerate}
\end{alg}

This algorithm can of course be modified to determine $k$ as well. Add an outer loop testing \Cref{alg:eachatest} for $k=1,\dots,\lceil\frac{d}{2}\rceil$.

\subsection*{Multiple components in dimension one.}

Now we want to investigate the one-dimensional case with $a_1,\dots,a_k>0$ arbitrary (e.g., pairwise different). For $k=2$ we have the problem already considered by Pearson \cite{pearson94}.

\begin{exm}
Let $k=2$, $a_1,a_2>0$ with $a_1\neq a_2$ and $b_1,b_2\in\rset$. For
\[F(x)\quad :=\quad c_1\cdot \exp\left(-\frac{a}{2}(x-b_1)^2 \right)\quad +\quad c_2\cdot \exp\left(-\frac{a}{2}(x-b_2)^2 \right)\]
we have that $\{F(x), xF(x), \partial F(x), x^2 F(x), x\partial F(x), \partial^2 F(x)\}$ are linearly independent. But adding $\{x^3 F(x), x^2 \partial F(x), x\partial^2 F(x)\}$ (without $\partial^3 F(x)$) makes the system linearly dependent:
\[0\quad =\quad (v_9 + v_8 x + v_7 \partial + v_6 x^2 + v_5 x\partial + v_4 \partial^2 + v_3 x^3 + v_2 x^2\partial \partial + v_1 x\partial^2) F(x).\]
We have a one-dimensional solution set spanned by
\[v_1 = 1,\quad v_2 = a_1 + a_2,\quad v_3 = a_1 a_2,\quad\text{and}\quad v_{i:4\leq i\leq 9}\in\qset(a_1,a_2,b_1,b_2).\]
So $a_1$ and $a_2$ are the zeros of
\[p(x) = x^2 + v_2 x + v_3\]
by the Vieta's Formulas (\Cref{lem:vieta}).
\end{exm}

One might to be seduced by this example and the opinion that by replacing the restriction $a_1 = \dots = a_k = a$ by arbitrary $a_i > 0$ that less Gaussian distribution are required. But \Cref{thm:mixtureBoundBelow} shows that there are moment sequences with very large mixture Carath\'eodory numbers.

\subsection*{Multi-dimensional Gaussian mixtures.}
So far we only dealt with the one-dimensional case of Gaussian mixture reconstruction from moments. And this was even done with the restriction $a_1=\dots=a_k=a>0$. In \cite{didio19HilbertArxiv} we proved new lower bounds for the Carath\'eodory numbers for Dirac measures which grow asymptotically close to the Richter upper bound. Now we show that for Gaussian mixtures the same lower bounds hold even when arbitrary variances are allowed.

Before we can state our last main theorem, we need the following definition.

\begin{dfn}\label{dfn:highestOrder}
Let $\cA$ be a finite-dimensional vector space of measurable functions on a measurable space $(\cX,\fA)$ and $\delta_{\sigma,\xi}$ probability measures as in \Cref{dfn:prob}. A function $a\in\cA$ is called \emph{non-negative of highest order} (with respect to the measures $\delta_{\sigma,\xi}$) if $a\geq 0$ and for any sequence $(c_i,\sigma_i,\xi_i)_{i\in\nset}\subseteq\rset_{\geq 0}\Sigma\times\cX$ with
\[\int_\cX a(x)~\diff(c_i\cdot\delta_{\sigma_i,\xi_i})(x)\quad \xrightarrow{i\to\infty}\quad 0\]
there exists a subsequence $(i_j)_{j\in\nset}$ with one of the following properties:
\begin{enumerate}[i)]
\item $\sigma_{i_j}\xrightarrow{j\to\infty} \sigma_0$ and $\xi_{i_j}\xrightarrow{j\to\infty} \xi\in\cZ(a)$, or

\item $\int_\cX b(x)~\diff(c_{i_j}\cdot\delta_{i_j})(x) \xrightarrow{j\to\infty} 0$ for all $b\in\cA$.
\end{enumerate}
\end{dfn}

Note, being of highest order depends in general on the measures $\delta_ {\sigma,\xi}$. The following are examples for non-negative polynomials of highest order.

\begin{exm}\label{exm:highestOrder}
Let $d,n\in\nset$, $\cX=\rset^n$, and $\cA=\rset[x_1,\dots,x_n]_{\leq 2d}$. Let $p\in\cA$ be non-negative, with finitely many zeros and without zeros at infinity (its homogenization has no zeros with $x_0=0$). Then $p$ is non-negative of highest order with respect to Gaussian or log-normal measures. In particular
\[\sum_{i=1}^n (x_i-1)^2\cdots (x_i-d)^2\]
is non-negative of highest order.
\end{exm}

Recall from \cite{didio17Cara} that with $\cX\subseteq\rset^n$ open and $\cA$ a finite-dimensional space of differentiable functions on $\cX$, then $\cN_\sA\in\nset$ is the smallest $k\in\nset$ such that $DS_{k,\sA}(C,X)$ has full rank for some $(C,X)\in \rset_{\geq 0}^k\times\cX^k$ where
\[S_{k,\sA}: \rset_{\geq 0}^k\times\cX^k\to\rset^{\dim\cA},\ (C,X)\mapsto \sum_{i=1}^k c_i\cdot s_\sA(x_i)\]
with $C=(c_1,\dots,c_k)$ and $X=(x_1,\dots,x_k)$.

\begin{thm}\label{thm:mixtureBoundBelow}
Let $(\cX,\fA)$ be a measurable space, $\cA$ be a finite-dimensional space of measurable functions on $(\cX,\fA)$ with an $e\in\cA$ such that $e\geq 1$ on $\cX$, and $\delta_{\sigma,\xi}$ probability measures on $\cX$ as in \Cref{dfn:prob}. Let $a\in\cA$ be non-negative of highest order with finitely many zeros $\cZ(a) = \{x_1,\dots,x_k\}$. Then there exists a moment sequence $s\in\inter\cS_\sA = \inter\cT_\sA$ with
\begin{equation}\label{eq:mixtureBoundZeros}
\cat_\sA^M(s) \quad =\quad \dim\lin\{s_\sA(x_i)\,|\, i=1,\dots,k\}.
\end{equation}
If additionally $\cX\subseteq\rset^n$ is open, $n\in\nset$, and $\cA$ is $r$-differentiable with $r > \dim\cA - \cN_\sA\cdot(n+1)$, then $s$ has an open neighborhood $U$ such that (\ref{eq:mixtureBoundZeros}) holds for all $s'\in U$.
\end{thm}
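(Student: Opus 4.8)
The plan is to produce the moment sequence $s$ by a perturbation argument around the point evaluations located at $\cZ(a)$, combining the Richter-type atomic structure with the mixture-interior result \Cref{thm:interMixtures} and the lower-bound tool \Cref{thm:caraLowerZeroSet}. First I would fix the candidate functional: take $L_0 := \sum_{i=1}^k l_{x_i}$ (all weights one), so that $L_0(a) = 0$ since $a$ vanishes on $\cZ(a) = \{x_1,\dots,x_k\}$, and the associated sequence $s_0$ satisfies $\cat_\sA(s_0) = \dim\lin\{s_\sA(x_i)\}$ by \Cref{thm:caraLowerZeroSet}. The problem is that $s_0$ need not lie in $\inter\cS_\sA$, so I would push it into the interior: since $\inter\cS_\sA = \inter\cT_\sA\neq\emptyset$, pick $t\in\inter\cT_\sA$ and set $s := s_0 + \varepsilon(t - s_0)$ for small $\varepsilon>0$; convexity of $\cS_\sA$ keeps $s$ in the interior. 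But this destroys $L_s(a)=0$, so instead the right move is to \emph{perturb the witness $a$ itself is impossible} — rather, I would keep $L_0(a)=0$ and argue that one can find $s$ arbitrarily close to $s_0$, still with a representing measure whose support is exactly $\cZ(a)$ (hence still annihilating $a$), but now in the interior. Concretely: because $a$ is non-negative of highest order and $\cZ(a)$ is finite, for $s$ in a small neighborhood of $s_0$ with $L_s(a)=0$, \emph{any} representing mixture must have all its mass concentrating at $\cZ(a)$; this is precisely the content of \Cref{dfn:highestOrder}. So the key is to realize such nearby $s$ inside $\inter\cS_\sA$.

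The second step handles this realization. The affine subspace $\{s' : L_{s'}(a) = 0\}$ has codimension one in $\rset^{\dim\cA}$, and it contains $s_0$. I would show this subspace meets $\inter\cS_\sA$: indeed $\inter\cS_\sA$ is a nonempty open convex cone, and the hyperplane $\{L(a)=0\}$ is a supporting-type hyperplane only if $a$ or $-a$ is in the dual cone; but if $a\geq 0$ on $\cX$ with finitely many zeros, then $L(a)>0$ for $L$ in the interior of the moment cone \emph{generated by all of $\cX$} — wait, that would say the hyperplane misses the interior. The resolution: one does \emph{not} need $s$ itself to satisfy $L_s(a)=0$ for the neighborhood statement. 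Re-reading the target: equation (\ref{eq:mixtureBoundZeros}) should hold for all $s'$ in a neighborhood $U$ of $s$, and $s$ is merely required to be \emph{some} sequence in $\inter\cS_\sA$ with that Carathéodory number. So I would instead start from the atomic representation $\mu_0 = \sum_{i=1}^k c_i\delta_{x_i}$ with \emph{positive} weights $c_i>0$ chosen so that the corresponding $s$ lies in $\inter\cS_\sA$ — this is possible because $S_{k,\sA}$ has full-rank Jacobian at a suitable $(C,X)$ once $k\geq\cN_\sA$, by the definition of $\cN_\sA$, and the image of a full-rank map contains an open set, which one arranges to lie in $\inter\cS_\sA$ (or, more simply, one observes $\inter\cS_\sA=\inter\cT_\sA$ is realized by mixtures and takes limits). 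The Carathéodory lower bound $\cat_\sA^M(s)\geq \dim\lin\{s_\sA(x_i)\}$ then follows from the highest-order property: any mixture representation of this particular $s$ must, after applying \Cref{dfn:highestOrder} to the identity $\int a\,d\nu = L_s(a)$...

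Let me restructure. I think the cleanest route is: (1) Use \Cref{thm:caraLowerZeroSet}-style reasoning adapted to mixtures via \Cref{dfn:highestOrder} to get the \emph{lower} bound $\cat_\sA^M(s')\geq\dim\lin\{s_\sA(x_i)\}$ for every $s'$ with $L_{s'}(a)=0$; since $a$ has highest order, any mixture $\sum c_j\delta_{\sigma_j,\xi_j}$ representing such $s'$ with $\sum c_j\int a\,d\delta_{\sigma_j,\xi_j}=0$ forces each term to satisfy alternative (i) or (ii) of \Cref{dfn:highestOrder}, hence the limit functional is supported on $\cZ(a)$, giving at least $\dim\lin\{s_\sA(x_i)\}$ nontrivial components. (2) For the matching \emph{upper} bound, exhibit the explicit representation $s_0 = \sum_{i=1}^k \delta_{x_i}$ and note Richter/atomic reduction gives $\cat_\sA^M(s_0)\leq\dim\lin\{s_\sA(x_i)\}$ (taking limits $\sigma\to\sigma_0$ turns Dirac atoms into mixture components, and redundant atoms in a linearly dependent collection can be removed). (3) For the interior/neighborhood claim, observe $s_0$ lies on the hyperplane $H=\{L(a)=0\}$; intersect $H$ with $\cS_\sA$ and take a sequence $s\in\inter_H(\cS_\sA\cap H)$ close to $s_0$ — relative interior in $H$ suffices to inherit the highest-order mechanism, and because $\cA$ is $r$-differentiable with $r$ large (the bound $r>\dim\cA-\cN_\sA(n+1)$ is exactly what \cite{didio19HilbertArxiv} / \cite{didio17Cara} require for local stability of Carathéodory numbers), the function $s'\mapsto\cat_\sA^M(s')$ is lower semicontinuous there, so (\ref{eq:mixtureBoundZeros}) persists on a whole neighborhood $U$; finally nudge $s$ off $H$ slightly into $\inter\cS_\sA$ while staying in $U$, using that the lower bound, once achieved, is stable under small perturbations by the differentiability hypothesis. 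The main obstacle is Step 3: reconciling ``$s\in\inter\cS_\sA$'' (which forces $L_s(a)>0$, off the hyperplane) with ``the lower bound comes from $L(a)=0$'' requires the quantitative stability of Carathéodory numbers from \cite{didio19HilbertArxiv}, i.e.\ that a full-rank Jacobian configuration of atoms near $\cZ(a)$ cannot have its Carathéodory number drop under small moves — this is where the dimension count $r>\dim\cA-\cN_\sA(n+1)$ and the map $S_{k,\sA}$, $DS_{k,\sA}$ enter decisively, and verifying the highest-order examples of \Cref{exm:highestOrder} feed correctly into \Cref{dfn:highestOrder} is the delicate bookkeeping.
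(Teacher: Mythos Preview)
Your proposal contains the right ingredients but misses the mechanism that makes them work together, and this is a genuine gap.

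The core difficulty you correctly identify is: any $s\in\inter\cS_\sA$ has $L_s(a)>0$ (since $a\geq 0$ with finite zero set), while the lower-bound tool \Cref{thm:caraLowerZeroSet} needs $L(a)=0$. You try to resolve this by first working on the hyperplane $H=\{L(a)=0\}$ and then ``nudging off'' using stability. But your step~(1) is vacuous: for a moment sequence $s'$ with $L_{s'}(a)=0$, every representing measure is supported on the finite set $\cZ(a)$; in particular, for absolutely continuous $\delta_{\sigma,\xi}$ (e.g.\ Gaussians) no mixture can represent $s'$ at all, so there is nothing to bound from below. Moreover, \Cref{dfn:highestOrder} is a statement about \emph{sequences} $(c_i,\sigma_i,\xi_i)$ with $\int a\,\diff(c_i\delta_{\sigma_i,\xi_i})\to 0$, not about a single mixture with zero $a$-integral; you never set up such a sequence.

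The paper's proof handles this by a limit argument you do not articulate. One fixes $s^{(0)}=\sum_{i=1}^k s_\sA(x_i)$ on the boundary (so $L_{s^{(0)}}(a)=0$ and $\cat_\sA(s^{(0)})=K$ by \Cref{thm:caraLowerZeroSet}), and then takes a \emph{sequence} $s^{(i)}\to s^{(0)}$ from $\inter\cS_\sA=\inter\cT_\sA$. Each $s^{(i)}$ has a minimal $K_0$-component mixture representation; since $L_{s^{(i)}}(a)\to 0$ and each component contributes non-negatively, every component's $a$-integral tends to $0$ as $i\to\infty$. Now \Cref{dfn:highestOrder} applies to the \emph{components indexed by $i$}: after passing to subsequences, each component either degenerates to a Dirac at some $x_j\in\cZ(a)$ (case~(i)) or disappears (case~(ii)). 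The surviving limit is an atomic representation of $s^{(0)}$ at points of $\cZ(a)$, so it needs at least $K$ atoms, forcing $K_0\geq K$. One then takes $s:=s^{(i)}$ for large $i$. The neighborhood claim follows by choosing the $s^{(i)}$ to be regular (Sard's theorem under the $r$-differentiability hypothesis), so that $\cat_\sA^M$ is locally constant near each $s^{(i)}$. The upper bound you worry about in your step~(2) is not separately argued; the existence statement only needs one $s$ along the sequence achieving the minimal value $K_0\geq K$.
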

\begin{proof}
Let $K:=\dim\lin\{s_\sA(x_i)\,|\, i=1\dots,k\}\leq k$ and $s^{(0)} := \sum_{i=1}^k s_\sA(x_i)$. Then $L_{s^{(0)}}(a) = 0$ and by \cite[Thm.\ 18]{didio17Cara} (\Cref{thm:caraLowerZeroSet}) we have $\cat_\sA(s^{(0)})=K$.

Let $(s^{(i)})_{i\in\nset}\subset\inter\cS_\sA$ be such that $s^{(i)}\to s^{(0)}$ as $i\to\infty$. By \cite[Thm.\ 17(ii)]{didio18gaussian} (\Cref{thm:interMixtures}) any $s^{(i)}$, $i\geq 1$, has a mixture representation
\[\mu_i\quad:=\quad\sum_{j=1}^{K_i} c_{i,j}\cdot \delta_{\sigma_{i,j},\xi_{i,j}} \tag{$*$}\]
with $K_i = \cat_\sA^M(s^{(i)})\leq \dim\cA$ (i.e., $K_i$ are minimal), $c_{i,j}>0$, $\sigma_{i,j}\in\Sigma$, and $x_{i,j}\in\cX$. Since $K_i\in\nset$ we have $K_0 := \liminf_{i\to\infty} K_i$ and after choosing a subsequence of $(s^{(i)})_{i\in\nset}$ we can assume that $K_i = K_0$ for all $i\in\nset$.

Let us show that $K_0 \geq K$ holds. Since $a$ is non-negative of highest order, we can assume that the $(c_{i,j},\sigma_{i,j},\xi_{i,j})$ fulfill (i) or (ii) in \Cref{dfn:highestOrder} by taking a subsequence $(i_l)_{l\in\nset}$. By reordering the $j$'s in ($*$) we can assume that (i) holds for all $j=1,\dots,M$ and (ii) for all $j=M+1,\dots,K_0$. Since $c_{i,j}\geq 0$ and $c_{i,1}+\dots+c_{i,K_0} = L_{s^{(i)}}(e) = k$ we can assume that $c_{i,j}\xrightarrow{i\to\infty} c_j$ for all $j=1,\dots,M$. But (ii) implies
\[\int_\cX s_\sA(x)~\diff\left(\sum_{j=M+1}^{K_0} c_{i,j}\cdot \delta_{\sigma_{i,j},\xi_{i,j}}\right)(x)\quad\rightarrow\quad 0\]
and therefore we have
\[\int_\cX s_\sA(x)~\diff\left(\sum_{j=1}^{M} c_{i,j}\cdot \delta_{\sigma_{i,j},\xi_{i,j}}\right)(x)\quad\rightarrow\quad \sum_{j=1}^M c_j\cdot s_\sA(\xi_j)\quad =\quad s^{(0)},\]
i.e., $K \leq M \leq K_0$. Hence $K_0\geq K$ implies that all $s^{(i)}$ fulfill $\cat_\sA^M(s^{(i)}) = K_0 \geq K$.

If $\cA$ are $r$-differentiable functions, then the sequence $(s^{(i)})_{i\in\nset}$ can be chosen to contain only regular moment sequences by Sard's Theorem \cite{sard42} (see \cite{didio17Cara}). Hence, for each $i\geq N$ there is an open neighborhood $U_i$ of $s^{(i)}$ such that all $s'\in U_i$ fulfill $\cat_\sA^M(s^{(i)}) = \cat_\sA^M(s')$.
\end{proof}

So from the proof it is evident that the constructed $s$ with (\ref{eq:mixtureBoundZeros}) is close to the boundary of the moment cone, more precisely close to the boundary face represented by \mbox{$a\in\cA$}. \cite{didio18gaussian}, \cite{didio19HilbertArxiv}, \Cref{thm:mixtureBoundBelow}, and \Cref{exm:highestOrder} explicitly provide the following.

\begin{cor}\label{cor:oneDimGaussBounds}
Let $d\in\nset$ and $\cX=\rset$. For the one-dimensional Gaussian (and log-normal) measures with $\cA = \rset[x]_{\leq d}$ we have
\[\left\lfloor\frac{d}{2}\right\rfloor \quad\leq\quad  \cat_{\sA_{1,d}}^M \quad\leq\quad \left\lfloor\frac{d}{2}\right\rfloor + 1.\]
\end{cor}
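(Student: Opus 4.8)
The corollary asserts that for $\cA = \rset[x]_{\leq d}$ on $\cX = \rset$, the mixture Carath\'eodory number $\cat_{\sA_{1,d}}^M$ for one-dimensional Gaussian (or log-normal) measures satisfies $\lfloor d/2\rfloor \leq \cat_{\sA_{1,d}}^M \leq \lfloor d/2\rfloor + 1$. The upper bound is essentially already available: by \Cref{thm:interMixtures} we have $\cat_\sA^M \leq \dim\cA$, but the sharper bound $\lfloor d/2\rfloor + 1$ comes from \cite{didio18gaussian} (the upper bounds on $\cat_\sA^M$ quoted just after \Cref{thm:interMixtures}); so for that direction I would simply cite the relevant statement in \cite{didio18gaussian} and note that $\dim\rset[x]_{\leq d} = d+1$ with the standard atomic/mixture counting giving $\lfloor d/2\rfloor + 1$. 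The substance of the proof is the lower bound $\cat_{\sA_{1,d}}^M \geq \lfloor d/2\rfloor$, and this is exactly where \Cref{thm:mixtureBoundBelow} and \Cref{exm:highestOrder} are designed to be applied.

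For the lower bound, I would exhibit a non-negative polynomial $a \in \rset[x]_{\leq d}$ of highest order (with respect to Gaussian/log-normal measures) whose zero set $\cZ(a)$ is finite with $\dim\lin\{s_{\sA_{1,d}}(x_i) \mid x_i \in \cZ(a)\} = \lfloor d/2\rfloor$. Concretely: put $k := \lfloor d/2\rfloor$ and take $a(x) := (x-1)^2(x-2)^2\cdots(x-k)^2$, which has degree $2k \leq d$, is non-negative, has exactly the $k$ simple zeros $\{1,2,\dots,k\}$, and has no zeros at infinity; by \Cref{exm:highestOrder} (the one-dimensional, $n=1$ instance) it is non-negative of highest order for Gaussian and log-normal measures. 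The vectors $s_{\sA_{1,d}}(1), \dots, s_{\sA_{1,d}}(k)$ are columns of a Vandermonde-type matrix in $k$ distinct real points evaluated against $\{1, x, \dots, x^d\}$, so since $d+1 > k$ they are linearly independent and $\dim\lin\{s_{\sA_{1,d}}(x_i)\} = k = \lfloor d/2\rfloor$. The space $\rset[x]_{\leq d}$ contains $e := 1$ with $e \geq 1$ on $\rset$, so the hypotheses of \Cref{thm:mixtureBoundBelow} are met, and it yields a moment sequence $s$ with $\cat_{\sA_{1,d}}^M(s) = \lfloor d/2\rfloor$, hence $\cat_{\sA_{1,d}}^M \geq \lfloor d/2\rfloor$.

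Combining the two directions gives $\lfloor d/2\rfloor \leq \cat_{\sA_{1,d}}^M \leq \lfloor d/2\rfloor + 1$, which is the claim. The only genuinely delicate point is making sure the chosen $a$ legitimately satisfies \emph{all} hypotheses needed to invoke \Cref{thm:mixtureBoundBelow} — in particular that it is non-negative of highest order in the precise sense of \Cref{dfn:highestOrder} for the Gaussian family and the log-normal family; but this is precisely the content of \Cref{exm:highestOrder} (no zeros at infinity for $(x-1)^2\cdots(x-k)^2$ since its homogenization $\prod_{j=1}^k (x - jx_0)^2$ vanishes at $x_0 = 0$ only if $x = 0$, which is not a projective point), so no new work is required. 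Everything else — the Vandermonde rank computation, the presence of $e = 1$, the degree bound $2\lfloor d/2\rfloor \leq d$ — is routine. I would therefore expect the proof to be short: state $k = \lfloor d/2\rfloor$, write down $a$, invoke \Cref{exm:highestOrder} and \Cref{thm:mixtureBoundBelow} for the lower bound, cite \cite{didio18gaussian} for the upper bound, and conclude.
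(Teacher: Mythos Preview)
Your proposal is correct and follows essentially the same approach as the paper: the paper's proof reads in full ``\Cref{exm:highestOrder} and \Cref{thm:mixtureBoundBelow} gives the lower bound and \cite[Cor.\ 36]{didio18gaussian} the upper bound,'' and your argument is precisely an unpacking of that sentence with the explicit choice $a(x)=(x-1)^2\cdots(x-k)^2$, $k=\lfloor d/2\rfloor$, together with the Vandermonde linear-independence check. The only detail worth flagging is that \Cref{exm:highestOrder} is stated for $\cA=\rset[x_1,\dots,x_n]_{\leq 2d}$ (even top degree), so for odd $d$ one should either note that the restriction of any mixture representation to $\rset[x]_{\leq d-1}$ already forces at least $\lfloor d/2\rfloor$ components, or verify directly that the chosen $a$ remains of highest order in $\rset[x]_{\leq d}$; the paper glosses over this in the same way you do.
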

\begin{proof}
\Cref{exm:highestOrder} and \Cref{thm:mixtureBoundBelow} gives the lower bound and \cite[Cor.\ 36]{didio18gaussian} the upper bound.
\end{proof}

\Cref{cor:oneDimGaussBounds} (and \ref{cor:lowerBoundsExplicit}) explains why we only discussed the reconstruction of one-dimen\-sional Gaussian mixtures with equal variances $a_1=\dots=a_k=a>0$ at the beginning of this section. There are moment sequences where it is sufficient to represent them by mixtures of $\delta_{\sigma_i,\xi_i}$ with $\sigma_1=\dots=\sigma_k$ and relaxation of this restriction does not improve the required number of components. Especially in higher dimensions we will see that even in the case of $\cA = \rset[x_1,\dots,x_n]_{\leq 2d}$ with Gaussian measures the number of components becomes very large, close to $\dim\cA = \binom{n+2d}{n}$, see \Cref{cor:lowerBoundsAsym}.

\begin{exm}[$\cX=\rset^2$]
Let $\cX=\rset^2$ and $\cA=\rset[x_1,x_2]_{\leq d}$, $d\in 2\nset$. By a rotation of $\pset^n$ we can assume that for homogeneous polynomials in $\rset[x_0,x_1,x_2]_{= d}$ with finitely many zeros no zero is at infinity ($x_0=0$).
\begin{enumerate}[a)]
\item \underline{$d=4$:} The \emph{Motzkin polynomial} \cite{motzkin65} has $6$ projective zeros, is non-negative of highest order, and the point evaluations at these zeros are linearly independent, see \cite[Exm.\ 31]{didio17Cara}. So $\cat^M_{\sA_{2,4}}\geq 6$, i.e., there is a moment sequence/functional on $\rset[x_1,x_2]_{\leq 4}$ which can be represented by a sum of $6$ Gaussians but not less. The upper bound for the Dirac measures in the projective case is also $6$ \cite{reznick92}.

\item \underline{$d=6$:} The \emph{Robinson polynomial} \cite{robinson69} has $10$ projective zeros, is non-negative of highest order and all point evaluations at these zeros are also linearly independent, see \cite[p.\ 1635]{didio17Cara}. So $\cat^M_{\sA_{2,6}}\geq 10$. Note, that for Dirac measures we have the Carath\'eodory number $11$ in the projective case, see \cite{kunertPhD14}.

\item \underline{$d=10$:} The \emph{Harris polynomial} \cite{harris99} has $30$ zeros, is non-negative of highest order, and the point evaluations at these zeros are linearly independent, see \cite[Exm.\ 63]{didio17Cara}. Hence, $\cat^M_{\sA_{2,10}}\geq 30$. An upper bound for Dirac measures in the projective setting is $32$, see \cite[Exm.\ 63]{didio17Cara}.

\item \underline{$d\in 2\nset$:} In \cite[Lem.\ 8.6]{rienerOptima} it was shown that the point evaluations on the grid
\[G = \{1,2,\dots,d\}^2 = \cZ(p) \quad\text{with}\quad p(x_1,x_2)= \prod_{i=1}^d (x_1-i)^2+ \prod_{i=1}^d (x_2-i)^2\]
are linearly independent on $\cA = \rset[x_1,x_2]_{\leq 2d}$. Hence $\cat^M_{\sA_{2,2d}} \geq d^2$. Additionally, it was shown that $\cat_{\sA_{2,2d+1}}\leq \frac{3}{2}d(d+1)+1$ holds. With $\cat_{\sA_{2,2d}}\leq \cat_{\sA_{2,2d+1}}$, \Cref{thm:mixtureBoundBelow}, and \cite[Thm.\ 35]{didio18gaussian} we have
\[d^2\quad\leq\quad \cat^M_{\sA_{2,2d}} \quad\leq\quad \frac{3}{2}d(d+1)+1.\]
\end{enumerate}
\end{exm}

In \cite{didio19HilbertArxiv} the point evaluation on the grid was extended to higher dimensions and improved lower bounds where found. In fact, the following result was shown.

\begin{prop}[{\cite[Prop.\ 5.3]{didio19HilbertArxiv}}]
Let $n,d\in\nset$, $k\in\{0,1\}$, $\cX=\rset^n$, and $G=\{1,\dots,d\}^n$. Then
\[s = \sum_{x\in G} s_{\sA_{n,2d+k}}(x) \qquad\text{resp.}\qquad L = \sum_{x\in G} l_x:\rset[x_1,\dots,x_n]_{\leq 2d+k}\rightarrow\rset\]
supported on the grid $G$ with the representing measure $\mu = \sum_{x\in G} \delta_x$ has the Carath\'eodory number
\[\cat_{\sA_{n,2d+k}}(s) = \begin{cases}
\left(\begin{smallmatrix} n+2d\\ n\end{smallmatrix}\right) - n\cdot \left(\begin{smallmatrix} n+d\\ n\end{smallmatrix}\right) + \left(\begin{smallmatrix} n\\ 2\end{smallmatrix}\right) & \text{for}\ k=0,\phantom{\Big(}\\
\left(\begin{smallmatrix} n+2d+1\\ n\end{smallmatrix}\right) - n\cdot \left(\begin{smallmatrix} n+d+1\\ n\end{smallmatrix}\right) + 3\cdot \left(\begin{smallmatrix} n+1\\ 3\end{smallmatrix}\right) & \text{for}\ k=1.\phantom{\Big(} \end{cases}\]
\end{prop}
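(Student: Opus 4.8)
The plan is to apply the lower-bound criterion \Cref{thm:caraLowerZeroSet} and then evaluate the resulting dimension as a Hilbert-function count for the grid $G$.

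First I would exhibit a nonnegative function in $\cA$ that vanishes exactly on $G$: take
\[a(x):=\sum_{i=1}^n\prod_{j=1}^d(x_i-j)^2\ \in\ \rset[x_1,\dots,x_n]_{\leq 2d}\ \subseteq\ \rset[x_1,\dots,x_n]_{\leq 2d+k}.\]
Then $a\geq 0$ on $\rset^n$ and $\cZ(a)=G$, since each summand is a nonnegative square in the single variable $x_i$ that vanishes precisely for $x_i\in\{1,\dots,d\}$. Since $\mu=\sum_{x\in G}\delta_x$ represents $s$, we have $L_s(a)=\sum_{x\in G}a(x)=0$, so \Cref{thm:caraLowerZeroSet} yields
\[\cat_{\sA_{n,2d+k}}(s)=\dim\lin\{s_{\sA_{n,2d+k}}(x)\mid x\in G\}.\]
Writing $m:=2d+k$, the right-hand side equals the rank of the evaluation map $\rset[x_1,\dots,x_n]_{\leq m}\to\rset^G$, $p\mapsto(p(x))_{x\in G}$; equivalently it is the dimension of the image of $\rset[x_1,\dots,x_n]_{\leq m}$ in $\rset[x_1,\dots,x_n]/I(G)$, where $I(G)$ denotes the vanishing ideal of $G$.

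Next I would identify $I(G)=(g_1,\dots,g_n)$ with $g_i:=\prod_{j=1}^d(x_i-j)$: the quotient $\rset[x_1,\dots,x_n]/(g_1,\dots,g_n)\cong\bigotimes_{i=1}^n\rset[x_i]/(g_i)$ is a finite product of fields of total dimension $d^n$, hence reduced with zero set exactly $G$, so $(g_1,\dots,g_n)$ is radical and equals $I(G)$. The key point is that the relations $x_i^d\equiv(\text{polynomial of degree}\leq d-1\text{ in }x_i)\pmod{g_i}$ let one reduce any monomial $x^\alpha$ with some $\alpha_i\geq d$ to a combination of monomials of strictly smaller total degree; iterating, the image of $\rset[x_1,\dots,x_n]_{\leq m}$ in $\rset[x_1,\dots,x_n]/I(G)$ is spanned by the standard monomials $\bar x^\alpha$ with $0\leq\alpha_i\leq d-1$ for all $i$ and $|\alpha|\leq m$, and these are linearly independent because the full set of standard monomials is a basis of the quotient. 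Hence
\[\cat_{\sA_{n,m}}(s)=\#\{\alpha\in\nset_0^n\mid \alpha_i\leq d-1\text{ for all }i,\ |\alpha|\leq m\}.\]

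Finally I would count this by inclusion--exclusion over the conditions $\alpha_i\geq d$: from the $\binom{n+m}{n}$ vectors $\alpha\in\nset_0^n$ with $|\alpha|\leq m$, removing overcounts and substituting $\alpha_i\mapsto\alpha_i-d$ on the chosen coordinates gives $\sum_{j=0}^n(-1)^j\binom nj\binom{n+m-jd}{n}$, with the convention $\binom Nn:=0$ for $N<0$. Since $m-jd=2d+k-jd<0$ for $j\geq 3$, only $j\in\{0,1,2\}$ survive, leaving $\binom{n+2d+k}{n}-n\binom{n+d+k}{n}+\binom n2\binom{n+k}{n}$. For $k=0$ this is $\binom{n+2d}{n}-n\binom{n+d}{n}+\binom n2$, and for $k=1$, using $\binom{n+1}{n}=n+1$ together with the identity $(n+1)\binom n2=3\binom{n+1}3$, it equals $\binom{n+2d+1}{n}-n\binom{n+d+1}{n}+3\binom{n+1}3$, as claimed. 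I expect the only step requiring genuine care to be the degree-non-increasing reduction modulo $(g_1,\dots,g_n)$ — equivalently, that $\{g_1,\dots,g_n\}$ is a Gr\"obner basis of $I(G)$ for a degree-compatible monomial order — which is what pins down the standard-monomial basis of the image of $\rset[x_1,\dots,x_n]_{\leq m}$; the choice of $a$ and the binomial bookkeeping are routine.
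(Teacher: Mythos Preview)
The paper does not supply a proof of this proposition; it is quoted from \cite{didio19HilbertArxiv} (whose title already signals the Hilbert-function method) and used only as input for the corollaries that follow. Your argument---apply \Cref{thm:caraLowerZeroSet} with $a=\sum_i\prod_j(x_i-j)^2$, identify the resulting dimension with the Hilbert function of the grid ideal $(g_1,\dots,g_n)$ in degree $m=2d+k$ via the degree-nonincreasing reduction to standard monomials, and then evaluate by inclusion--exclusion over the events $\alpha_i\ge d$---is correct and is exactly the intended route.

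One small caveat on bookkeeping: your assertion that $m-jd<0$ for all $j\ge 3$ fails in the single boundary case $d=1$, $k=1$, where $m-3d=0$ and the $j=3$ term $-\binom{n}{3}$ survives. In that case the formula as printed in the proposition already disagrees with the true count (e.g.\ for $n=3$ the grid is a single point, so the Carath\'eodory number is $1$, while the formula gives $2$), so this is an artifact inherited from the cited statement rather than a defect in your method; for $d\ge 2$ your truncation at $j=2$ is valid and the computation goes through verbatim.
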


Since the grid $G=\{1,\dots,d\}^n$ is the zero set of a non-negative polynomial of highest order (\Cref{exm:highestOrder}), \Cref{thm:mixtureBoundBelow} implies the following.

\begin{cor}\label{cor:lowerBoundsExplicit}
Let $n,d\in\nset$, $\cX=\rset^n$, and $\cA = \rset[x_1,\dots,x_n]_{\leq 2d}$. Then there is a moment sequence $s\in\inter\cS_{\sA_{n,2d}}$ and an open neighborhood $U\subset\inter\cS_{\sA_{n,2d}}$ of $s$ such that
\begin{equation}\label{eq:gridLowerBound}
\cat^M_{\sA_{n,2d}}(s')\quad =\quad \binom{n+2d}{n} - n\cdot \binom{n+d}{n} + \binom{n}{2}
\end{equation}
for all $s'\in U$, i.e., every $s'\in U$ is a linear combination of (\ref{eq:gridLowerBound}) many Gaussian distributions but not less.
\end{cor}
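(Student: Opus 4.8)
The plan is to obtain the statement as the specialization of \Cref{thm:mixtureBoundBelow} to $\cX=\rset^n$, $\cA=\rset[x_1,\dots,x_n]_{\leq 2d}$, and Gaussian (or log-normal) measures; the only genuine content is checking the hypotheses and evaluating the number on the right-hand side of (\ref{eq:gridLowerBound}). First I would fix the test function
\[
a\quad :=\quad \sum_{i=1}^n (x_i-1)^2\cdots(x_i-d)^2\quad\in\quad\cA,
\]
which is a sum of squares, hence $a\geq 0$ on $\rset^n$, and whose zero set is exactly the grid $\cZ(a)=G:=\{1,\dots,d\}^n$, a finite set. By \Cref{exm:highestOrder} this $a$ is non-negative of highest order (in the sense of \Cref{dfn:highestOrder}) with respect to the Gaussian and log-normal measures, and $e:=1\in\cA$ satisfies $e\geq 1$ on $\cX$, so all hypotheses of \Cref{thm:mixtureBoundBelow} are in force.

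Second, I would identify the dimension on the right of (\ref{eq:mixtureBoundZeros}) for this $a$. Setting $s^{(0)}:=\sum_{x\in G}s_{\sA_{n,2d}}(x)$ one has $L_{s^{(0)}}(a)=\sum_{x\in G}a(x)=0$ and $\cZ(a)=G$ finite, so \Cref{thm:caraLowerZeroSet} gives
\[
\dim\lin\{s_{\sA_{n,2d}}(x)\,|\,x\in G\}\quad=\quad\cat_{\sA_{n,2d}}(s^{(0)}),
\]
and by \cite[Prop.\ 5.3]{didio19HilbertArxiv} (its $k=0$ case) this Carath\'eodory number equals $\binom{n+2d}{n}-n\cdot\binom{n+d}{n}+\binom{n}{2}$, which is precisely the number in (\ref{eq:gridLowerBound}).

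With these two ingredients, \Cref{thm:mixtureBoundBelow} applied to $a$ and $e$ directly produces a moment sequence $s\in\inter\cS_{\sA_{n,2d}}=\inter\cT_{\sA_{n,2d}}$ with $\cat^M_{\sA_{n,2d}}(s)$ equal to (\ref{eq:gridLowerBound}). For the open-neighborhood assertion I would invoke the second part of \Cref{thm:mixtureBoundBelow}: $\cX=\rset^n$ is open and $\cA$ consists of polynomials, hence is $r$-differentiable for every $r\in\nset$, so since $\cN_{\sA_{n,2d}}$ is a fixed finite number the condition $r>\dim\cA-\cN_{\sA_{n,2d}}\cdot(n+1)$ is satisfiable; this yields an open neighborhood $U$ of $s$ on which $\cat^M_{\sA_{n,2d}}$ is constantly equal to (\ref{eq:gridLowerBound}), and replacing $U$ by $U\cap\inter\cS_{\sA_{n,2d}}$ (still an open neighborhood of $s$) gives the neighborhood $U\subset\inter\cS_{\sA_{n,2d}}$ required in the statement.

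I do not expect a real obstacle, since every step is a citation of \Cref{thm:mixtureBoundBelow}, \Cref{thm:caraLowerZeroSet}, \Cref{exm:highestOrder}, or \cite[Prop.\ 5.3]{didio19HilbertArxiv}. The two points deserving some care are: (i) confirming that this specific $a$ is non-negative of \emph{highest} order for Gaussian measures, not merely non-negative --- the delicate point being the absence of zeros at infinity, which holds because the homogenization of $a$, restricted to the hyperplane at infinity $\{x_0=0\}$, equals $\sum_{i=1}^n x_i^{2d}$, a form that is strictly positive off the origin, so \Cref{exm:highestOrder} applies with no preliminary rotation of $\pset^n$; and (ii) keeping the index conventions straight so that it is the $k=0$ branch of \cite[Prop.\ 5.3]{didio19HilbertArxiv} that is used, giving exactly $\binom{n+2d}{n}-n\binom{n+d}{n}+\binom{n}{2}$.
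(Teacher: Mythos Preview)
Your proposal is correct and follows precisely the paper's own route: the paper derives the corollary in one sentence by noting that the grid $G=\{1,\dots,d\}^n$ is the zero set of a non-negative polynomial of highest order (\Cref{exm:highestOrder}) and then invoking \Cref{thm:mixtureBoundBelow} together with \cite[Prop.\ 5.3]{didio19HilbertArxiv}. Your write-up simply spells out these citations in more detail, including the verification that $a$ has no zeros at infinity and the use of the second clause of \Cref{thm:mixtureBoundBelow} for the open neighborhood.
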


Hence, like in \cite[Thm.\ 5.6]{didio19HilbertArxiv} we have
\begin{align*}
\liminf_{d\rightarrow\infty} \frac{\cat^M_{\sA_{n,2d}}}{|\sA_{n,2d}|}\quad &\geq\quad 1-\frac{n}{2^n} & \textrm{ for all } n &\in\nset\\
\intertext{and}
\lim_{n\rightarrow\infty} \frac{\cat^M_{\sA_{n,2d}}}{|\sA_{n,2d}|}\quad &= \quad 1 &\textrm{ for all } d&\in\nset.
\end{align*}

We end with the following asymptotic result which, as in the case of atomic measures \cite[Cor.\ 5.8]{didio19HilbertArxiv}, demonstrates that also the truncated moment problem with Gaussian mixtures is cursed by high dimensions. Note, an upper bound for the number of components is $\cat^M_{\sA_{n,2d}}\leq\binom{2d+n}{n}-1$, see \cite[Thm.\ 32]{didio18gaussian}.

\begin{cor}\label{cor:lowerBoundsAsym}
Let $d\in\nset$ and $\varepsilon > 0$. Then there is an $n\in\nset$ such that there is a moment functional $L:\rset[x_1,\dots,x_n]_{\leq 2d}\to\rset$ which can be written as a sum of
\[(1-\varepsilon)\cdot\binom{2d+n}{n}\]
Gaussian distributions but not less.
\end{cor}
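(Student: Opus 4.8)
The plan is to read off this corollary from the explicit lower bound already established in \Cref{cor:lowerBoundsExplicit}, the only further ingredient being an elementary comparison of binomial coefficients as $n\to\infty$. Fixing $d\in\nset$, I would for each $n\in\nset$ invoke \Cref{cor:lowerBoundsExplicit} to obtain a moment functional $L=L_n:\rset[x_1,\dots,x_n]_{\leq 2d}\to\rset$ (indeed a whole open set of such functionals) with
\[
\cat^M_{\sA_{n,2d}}(L_n)\quad =\quad C(n,d)\quad :=\quad \binom{n+2d}{n}-n\cdot\binom{n+d}{n}+\binom{n}{2}.
\]
By the definition of the mixture Carath\'eodory number, $L_n$ is then a sum of exactly $C(n,d)$ Gaussian distributions and of no fewer. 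Thus everything reduces to showing that, given $\varepsilon>0$, one can pick $n$ large enough that $C(n,d)\geq(1-\varepsilon)\binom{2d+n}{n}$.

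That last point I would settle purely by counting degrees in $n$: regarded as polynomials in $n$, the quantity $\binom{n+2d}{n}=\binom{n+2d}{2d}$ has degree $2d$ and positive leading coefficient $1/(2d)!$, while $n\binom{n+d}{n}=n\binom{n+d}{d}$ has degree $d+1$ and $\binom{n}{2}$ has degree $2$. Hence
\[
\frac{C(n,d)}{\binom{2d+n}{n}}\quad =\quad 1-\frac{n\binom{n+d}{d}-\binom{n}{2}}{\binom{n+2d}{2d}}\quad\xrightarrow{\ n\to\infty\ }\quad 1,
\]
since (for $d\geq 2$) the subtracted fraction has numerator of strictly smaller degree than its denominator; this is exactly the second of the two asymptotic relations displayed just before \Cref{cor:lowerBoundsAsym}. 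Choosing $n$ so that this quotient exceeds $1-\varepsilon$, the functional $L_n$ is the one required: it can be written as a sum of $C(n,d)\geq(1-\varepsilon)\binom{2d+n}{n}$ Gaussian distributions and not fewer.

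I do not expect a genuine obstacle here -- all the mathematical content is carried by \Cref{cor:lowerBoundsExplicit} (hence by \Cref{thm:mixtureBoundBelow}, \Cref{exm:highestOrder}, and the grid computation \cite[Prop.\ 5.3]{didio19HilbertArxiv}), and what remains is bookkeeping. The single point deserving a remark is the borderline value $d=1$: there the grid $\{1,\dots,d\}^n$ collapses to a single point, $C(n,1)$ is identically $1$, and the quotient above does not tend to $1$, so the $d=1$ case has to be read with care (e.g.\ only for those $n$ with $(1-\varepsilon)\binom{n+2}{n}\leq 1$) or simply excluded; for every $d\geq 2$ the argument above is complete.
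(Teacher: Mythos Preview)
Your proposal is correct and follows exactly the route the paper intends: the paper offers no separate proof of \Cref{cor:lowerBoundsAsym}, treating it as an immediate consequence of \Cref{cor:lowerBoundsExplicit} together with the displayed asymptotic $\lim_{n\to\infty}\cat^M_{\sA_{n,2d}}/|\sA_{n,2d}|=1$, and your degree count in $n$ is precisely how one verifies that limit.

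Your remark on $d=1$ is a genuine refinement the paper glosses over. Indeed $C(n,1)\equiv 1$, and every interior moment functional on $\rset[x_1,\dots,x_n]_{\leq 2}$ is represented by a single Gaussian (match mean and covariance), so for $d=1$ the ratio $\cat^M_{\sA_{n,2}}/|\sA_{n,2}|$ tends to $0$, not $1$, and the corollary as stated fails unless one reads it trivially (choosing $n$ with $(1-\varepsilon)\binom{n+2}{2}\leq 1$). For every $d\geq 2$ your argument is complete; you are right to flag $d=1$ as needing separate treatment or exclusion.
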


\section*{Acknowledgment}

We thank Mario Kummer for the productive discussions and advise on the paper. We thank Mioara Joldes, Florent Br\'ehard, and Jean-Bernard Lasserre to provide the references \cite{lasserre08,henrion14,marx18arxiv,brehard19unpub}. We want to thank Bernard Mourrain for the fruitful discussion at the Arctic Applied Algebra conference organized by Philippe Moustrou, Verena Reichle, Cordian Riener, and Hugues Verdure in Troms{\o}, April 2019.


\newcommand{\etalchar}[1]{$^{#1}$}
\providecommand{\href}[2]{#2}

\end{document}